\documentclass[11pt]{article}
\usepackage{amscd}
\usepackage{amsmath}
\usepackage{latexsym}
\usepackage{amsfonts}
\usepackage{amssymb}
\usepackage{amsthm}
\usepackage{graphicx}

 \oddsidemargin .5cm \evensidemargin .5cm \marginparwidth 40pt
 \marginparsep 10pt \topmargin 0.5cm
 \headsep1pt
 \headheight 0pt
 \textheight 8.5in
 \textwidth 5.8in
 \sloppy

 \setlength{\parskip}{8pt}

\newtheorem{theorem}{Theorem}[section]
\newtheorem{lemma}{Lemma}[section]

\newtheorem{proposition}{Proposition}[section]
\newtheorem{definition}{Definition}[section]

\newtheorem{corollary}{Corollary}[section]
\newtheorem{remark}{Remark}[section]

\makeatletter
\newcommand{\Extend}[5]{\ext@arrow0099{\arrowfill@#1#2#3}{#4}{#5}}
\renewcommand{\labelenumi}{\Roman{enumi}.}
\makeatother

\begin{document}

 \title{ Global well-posedness and scattering for the mass-critical Hartree equation with radial data}
 \author{{Changxing Miao$^{\dag}$\ \ Guixiang Xu$^{\dag}$ \ \ and \ Lifeng Zhao $^{\ddag}$}\\
         {\small $^{\dag}$Institute of Applied Physics and Computational Mathematics}\\
         {\small P. O. Box 8009,\ Beijing,\ China,\ 100088}\\
         {\small $^\ddag$ Department of Mathematics, University of Science and Technology of China}\\
         {\small (miao\_changxing@iapcm.ac.cn, \ xu\_guixiang@iapcm.ac.cn, zhao\_lifeng@iapcm.ac.cn ) }\\
         \date{}
        }
\maketitle

 \noindent{\bf Abstract:}  We establish global
well-posedness and scattering for solutions to the mass-critical
nonlinear Hartree equation $iu_t+\Delta u=\pm(|x|^{-2}*|u|^2)u$
for large spherically symmetric $L^2_x(\Bbb{R}^d)$ initial data;
in the focusing case we require, of course, that the mass is
strictly less than that of the ground state. \vskip0.5cm

\noindent{\bf  R\'{e}sum\'{e}:} \;\; Nous \'etablissons
l'existence globale et la diffusion des solutions de l'\'equation
non lin\'eaire de masse critique de Hartree $iu_t+\Delta
u=\pm(|x|^{-2}*|u|^2)u$ pour des donn\'ees initiales grandes \`a
sym\'etrie sph\'erique dans $L^2_x(\Bbb{R}^d)$ ; dans le cas
focalisant nous imposons, bien s\^ur, que la masse soit
strictement inf\'erieure \`a celle de l'\'etat fondamental.

  \noindent { \small {\bf Key Words:}
      {Hartree equation, global well-posedness, scattering, mass-critical.}
   }

   \noindent { \small {\bf AMS Classification:}
      { 35Q40, 35Q55, 47J35.}
      }

 \section{Introduction}
 We primarily consider the mass-critical Hartree equation:
 \begin{equation} \label{har}
\left\{ \aligned
    iu_t +  \Delta u  & = \mu (|x|^{-2}*|u|^2)u, \quad  \text{in}\  \mathbb{R}^d \times \mathbb{R},\\
     u(0)&=u_0(x), \quad \text{in} \ \mathbb{R}^d.
\endaligned
\right.
\end{equation}
where $d\geq3$, $\mu=\pm1$, with $\mu=+1$ known as the defocusing
case and $\mu=-1$ as the focusing case.  The Hartree equation arises
in the study of Boson stars and other physical phenomena, see for
example \cite{pi}. In chemistry, it appears as a continous-limit
model for mesoscopic molecular structures, see \cite{grc}.
\begin{definition} A function
$u:I\times\Bbb{R}^d\rightarrow\mathbb{C}$ on a non-empty time
interval $I\subset\Bbb{R}$ (possibly infinite or semi-infinite) is a
strong $L^2(\Bbb{R}^d)$ solution to (\ref{har}) if it lies in the
class $C_t^0L_x^2(K\times\Bbb{R}^d)\cap
L_t^6L_x^\frac{6d}{3d-2}(K\times\Bbb{R}^d)$ for all compact
$K\subset I$, and we have the Duhamel formula
\begin{equation}u(t_1)=e^{i(t_1-t_0)\Delta}u(t_0)-i\int_{t_0}^{t_1}e^{i(t_1-t)\Delta}F(u(t))dt
\end{equation}
for all $t_0$, $t_1\in I$, where $F(u)=\mu (|x|^{-2}*|u|^2)u$. We
refer to the interval $I$ as the {\it lifespan} of $u$. We say that
$u$ is a {\it maximal-lifespan solution} if the solution cannot be
extended to any strictly larger interval. We say that u is a {\it
global solution} if $I=\Bbb{R}$.
\end{definition}
\begin{definition}
We say that a solution $u$ to (\ref{har}) blows up forward in time
if there exists a time $t_0\in I$ such that
\begin{equation}\|u\|_{L_t^6L_x^\frac{6d}{3d-2}([t_0,\sup(I))\times{\Bbb R}^d)}=\infty
\end{equation}
and that $u$ blows up backward in time if there exists a time
$t_0\in I$ such that
\begin{equation}\|u\|_{L_t^6L_x^\frac{6d}{3d-2}((\inf(I),t_0]\times{\Bbb
R}^d)}=\infty.
\end{equation}
\end{definition}

For the mass-critical Hartree equation (\ref{har}) with data in
$L^2$, the authors obtained some well-posedness and scattering
results in \cite{MiXZ1} using the method of \cite{CW}. We collect
these facts as follows:
\begin{theorem}[Local well-posedness]Given $u_0\in L_x^2({\Bbb
R}^d)$ and $t_0\in \Bbb R$, there exists a unique maximal-lifespan
solution $u$ to (\ref{har}) with $u(t_0)=u_0$. We will write $I$ for
the maximal lifespan. This solution also has the following
properties:\\
$\bigstar$ (Local existence) $I$ is an open neighborhood of $t_0$.\\
$\bigstar$ (Mass conservation) The solution $u$ has a conserved
mass: for $t\in I$,
\begin{equation}\label{mass}M(u)=M(u(t)):=\int_{{\Bbb R}^d}|u(t,x)|^2dx.\end{equation}
$\bigstar$ (Blowup criterion) If $\sup(I)$ is finite, then $u$ blows
up forward in time; if $\inf(I)$ is finite, then $u$ blows up
backward in time.\\
$\bigstar$ (Continuous dependence) If $u_0^{(n)}$ is a sequence
converging to $u_0$ in $L_x^2({\Bbb R}^d)$ and $u^{(n)}:
I^{(n)}\times {\Bbb R}^d\rightarrow\Bbb C$ are the associated
maximal-lifespan
solutions, then $u^{(n)}$ converges locally uniformly to $u$.\\
$\bigstar$ (Scattering) If $u$ does not blow up forward in time,
then $\sup(I)=+\infty$ and $u$ scatters forward in time, that is,
there exists a unique $u_+\in L_x^2({\Bbb R}^d)$ such that
$$\lim_{t\rightarrow+\infty}\|u(t)-e^{it\Delta}u_{+}\|_{L_x^2({\Bbb R}^d)}=0.$$ Similarly, if $u$ does not blow up
backward in time, then $\inf(I)=-\infty$ and $u$ scatters backward
in time, that is, there is a unique $u_-\in L_x^2({\Bbb R}^d)$ so
that
$$\lim_{t\rightarrow-\infty}\|u(t)-e^{it\Delta}u_{-}\|_{L_x^2({\Bbb R}^d)}=0.$$ $\bigstar$ (Spherical symmetry) If $u_0$ is spherically
symmetric, then $u$ remains spherically symmetric for all time.\\
$\bigstar$ (Small data global existence) If $M(u_0)$ is sufficiently
small, then $u$ is a global solution which does not blow up either
forward or backward in time. Indeed, in this case$$\int_{\Bbb
R}\Big(\int_{{\Bbb
R}^d}|u(t,x)|^\frac{6d}{3d-2}dx\Big)^\frac{3d-2}{d}dt\lesssim
M(u)^3.$$
\end{theorem}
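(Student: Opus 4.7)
The plan is to implement a Cazenave--Weissler-type contraction argument in the scaling-critical Strichartz space $S(I) := L_t^6 L_x^{6d/(3d-2)}(I \times \mathbb{R}^d)$. The pair $(6, \tfrac{6d}{3d-2})$ is Schr\"odinger admissible (since $\tfrac{2}{6} + \tfrac{d}{6d/(3d-2)} = \tfrac{d}{2}$) and its norm is invariant under the mass-critical scaling $u(t,x) \mapsto \lambda^{d/2} u(\lambda^2 t, \lambda x)$, making it the natural space for the cubic Hartree nonlinearity. I would show that the Duhamel map
\[
\Phi(u)(t) := e^{i(t-t_0)\Delta} u_0 - i \int_{t_0}^{t} e^{i(t-s)\Delta} F(u(s))\, ds
\]
is a contraction on a small ball of $C_t^0 L_x^2(I\times\mathbb{R}^d) \cap S(I)$, using the Strichartz inequality and its dual form. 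For $u_0$ of arbitrary mass one shrinks $|I|$ so that $\|e^{i(\cdot - t_0)\Delta} u_0\|_{S(I)}$ becomes small; for small mass one instead takes $I = \mathbb{R}$ directly from $\|e^{i(\cdot - t_0)\Delta} u_0\|_{S(\mathbb{R})} \lesssim \|u_0\|_{L_x^2}$.

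The main technical ingredient is the trilinear estimate
\[
\|F(u)\|_{L_t^2 L_x^{2d/(d+2)}(I \times \mathbb{R}^d)} \lesssim \|u\|_{S(I)}^3,
\]
together with its Lipschitz version. I would derive this by applying Hardy--Littlewood--Sobolev to the Riesz-type convolution, sending $|u|^2 \in L_x^{3d/(3d-2)}$ to $|x|^{-2}\ast |u|^2 \in L_x^{3d/4}$, and then pairing in H\"older with the remaining factor $u \in L_x^{6d/(3d-2)}$; in time, three factors of $L_t^6$ combine to $L_t^2$. The output space is a valid dual Strichartz space since $(2, \tfrac{2d}{d-2})$ is admissible for $d \geq 3$. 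The Lipschitz estimate then yields uniqueness and continuous dependence in the usual way, and the local solutions are glued into a unique maximal-lifespan solution.

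The remaining assertions follow from standard devices. Mass conservation is first verified for regular approximating data by multiplying the equation by $\bar u$, taking imaginary parts, and using that convolution with the real symmetric kernel $|x|^{-2}$ is formally self-adjoint; the identity then passes to the $L^2$ limit via continuous dependence. The blow-up criterion is the classical consequence of the local theory: finiteness of $\|u\|_{S([t_0,\sup I))}$ lets one partition the interval into finitely many subintervals on which the $S$-norm is small, apply the small-data theory from a base-point near $\sup I$, and extend past the endpoint, contradicting maximality. Scattering follows from the Duhamel representation: under the finite $S$-norm hypothesis, $e^{-it\Delta} u(t)$ is Cauchy in $L_x^2$ as $t \to \pm\infty$ by the same trilinear estimate applied to tails. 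Spherical symmetry is immediate from uniqueness combined with the rotational invariance of the equation and the function spaces. Small-data global existence is the global form of the contraction argument, and the cubic mass bound $\int_\mathbb{R} \|u(t)\|_{L_x^{6d/(3d-2)}}^6 \, dt \lesssim M(u)^3$ is read off from the closed estimate.

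The main obstacle is essentially the exponent bookkeeping in the trilinear estimate --- verifying that the Hardy--Littlewood--Sobolev gain from $|x|^{-2}$ combines with H\"older in exactly the right way to land in a dual Strichartz space. This numerology is forced by mass-criticality and works uniformly for all $d \geq 3$; once it is secured, the rest of the proof is structurally identical to the classical Cazenave--Weissler $L^2$-critical NLS treatment, with the local convolution product $|u|^2 u$ simply replaced by the nonlocal $(|x|^{-2} \ast |u|^2) u$.
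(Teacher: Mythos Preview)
Your proposal is correct and follows precisely the Cazenave--Weissler approach that the paper itself invokes (the theorem is stated without proof here, with a citation to \cite{MiXZ1} ``using the method of \cite{CW}''). Your exponent bookkeeping for the trilinear estimate via Hardy--Littlewood--Sobolev and H\"older is accurate and matches the estimates used later in the paper (e.g.\ in the stability lemma), so there is nothing to add.
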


From the small data global existence, we conclude that for the
mass-critical Hartree equation (\ref{har}), there exists a minimal
mass $m_0$ such that solutions with mass strictly smaller than $m_0$
are global and scatter in time. It is conjectured that $m_0$ should
be $+\infty$ in the defocusing case and be $M(Q)$ in the focusing
case, where $Q$ is the {\it ground state}, that is, the positive
radial Schwartz solution $Q$ to the elliptic equation
\begin{equation}\label{ground}\Delta Q+(|x|^{-2}*|Q|^2)Q=Q.
\end{equation}
%
%
%

In this paper we prove the conjecture for radial data. In
particular, we have
\begin{theorem}\label{main}In the defocusing case $\mu=+1$, all
maximal-lifespan radial solutions to (\ref{har}) are global and do
not blow up either forward or backward in time. In the focusing case
$\mu=-1$, all maximal-lifespan radial solutions to (\ref{har}) with
$M(u)<M(Q)$ are global and do not blow up either forward or backward
in time.
\end{theorem}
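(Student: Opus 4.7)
The plan is to argue by contradiction, following the concentration-compactness / rigidity framework pioneered by Kenig--Merle and adapted to the mass-critical setting by Killip, Tao and Vi\c{s}an. Assume the theorem fails. Combined with the small-data theory, this produces a critical mass
\[
 m_c := \sup\bigl\{\, m : \text{every radial solution $u$ with $M(u)<m$ does not blow up} \,\bigr\}
\]
which is strictly positive, finite, and $\leq M(Q)$ in the focusing case. The first step is to show that this critical mass is attained by a \emph{minimal-mass blowup solution} $u_c$ which is almost periodic modulo the symmetries of the equation. To produce $u_c$, I would establish a linear profile decomposition for the Schr\"odinger propagator acting on bounded radial sequences in $L^2_x(\mathbb{R}^d)$ (the radial assumption kills translation parameters, leaving only time and scale), then combine it with the nonlinear stability theory (perturbation lemma for the Hartree nonlinearity) to prove a Palais--Smale condition along sequences of near-minimal-mass blowup solutions. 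Extracting a limit and using continuous dependence gives $u_c$ whose orbit $\{u_c(t)\}$ is precompact in $L^2_x$ after quotienting by the scaling symmetry $u \mapsto N(t)^{d/2} u(N(t)x)$.

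Next, I would refine the structure of $u_c$. Using the radial almost periodicity and a reduction as in Killip--Tao--Vi\c{s}an, one can arrange that the frequency scale function $N(t)$ is locally constant and classify $u_c$ into three scenarios:
\begin{enumerate}
\item[I.] a \emph{self-similar} solution, with $I=(0,\infty)$ and $N(t)\sim t^{-1/2}$;
\item[II.] a \emph{soliton-like} solution, global in time with $N(t)\equiv 1$;
\item[III.] a \emph{double high-to-low cascade}, global with $N(t)\leq 1$ and $\liminf_{t\to\pm\infty} N(t)=0$.
\end{enumerate}

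The main work, and the hardest step, is then to rule out each scenario. The central tool is an additional-regularity bootstrap: using the in/out wave decomposition for radial Schr\"odinger data and the Duhamel formula evaluated at $\pm\infty$ (in the weak sense made possible by almost periodicity), I would show that $u_c(t)$ actually lies in $H^s_x$ for every $s\geq 0$, giving finite energy and finite kinetic energy. For the self-similar scenario, the $H^1$ regularity contradicts the scaling $N(t)\to\infty$ as $t\to 0^+$ (since $\|\nabla u_c(t)\|_{L^2}\gtrsim N(t)$ would blow up while being conserved-type). For the soliton and cascade scenarios, I would apply the classical Lin--Strauss / Morawetz estimate, or the frequency-localized interaction Morawetz refinement adapted to the Hartree nonlinearity, to derive a decay $\int |u_c|^2 / |x| \, dx\, dt < \infty$; combined with the lower bound $N(t)\lesssim 1$ and mass non-evacuation this forces $M(u_c)=0$, a contradiction. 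In the focusing case the cascade is ruled out slightly differently, using $H^1$-regularity together with the sharp Gagliardo--Nirenberg-type inequality associated to \eqref{ground} and the strict inequality $M(u_c)<M(Q)$ to keep $\|\nabla u_c\|_{L^2}$ under control, whence the same Morawetz argument applies.

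The principal obstacles are two. First, proving the linear profile decomposition and the Palais--Smale condition requires handling the nonlocal Hartree nonlinearity $(|x|^{-2}*|u|^2)u$, whose analysis of scattering/decoupling of profiles is heavier than for a pure power nonlinearity; one needs sharp bilinear/Strichartz estimates for convolutions with $|x|^{-2}$ and a careful orthogonality argument for the nonlinear term. Second, and more serious, is the additional-regularity step: one has to push $u_c$ from $L^2$ into $H^s$ for all $s$, which for the Hartree nonlinearity demands a delicate frequency-decomposed Duhamel analysis using the radial Strichartz improvements of Shao / Tao for $d\geq 3$. Once this regularity is secured, the Morawetz-type rigidity argument closes the contradiction in each of the three scenarios, completing the proof of Theorem \ref{main}.
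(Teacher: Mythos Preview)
Your proposal matches the paper's approach almost exactly: contradiction via a minimal-mass almost-periodic blowup solution, reduction to the three scenarios of Theorem~\ref{3sc}, an additional-regularity bootstrap (using the in/out decomposition and Shao's radial Strichartz estimate) to place $u_c$ in $H^s_x$ for all $s$, and then elimination of each scenario. The self-similar and soliton cases are handled as you describe---$H^1$ regularity plus $H^1$ global well-posedness kills the former, and a truncated virial/Morawetz identity kills the latter.

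The one place where your sketch diverges from the paper is the double high-to-low cascade. You propose to eliminate it by the same Morawetz argument used for the soliton, but that argument requires a uniform lower bound on $N(t)$ so that mass stays concentrated near the origin and the truncated virial error terms remain small; in the cascade scenario $\liminf_{t\to\pm\infty}N(t)=0$, so the localization breaks down and no spacetime lower bound is available. The paper instead uses a direct energy argument (Theorem~\ref{tm61}): once $u_c\in C^0_tH^s_x$ for some $s>1$, almost-periodicity with $N(t_n)\to 0$ forces $\|\nabla u_c(t_n)\|_{L^2}\to 0$ by interpolation between the compactness at frequencies $\lesssim N(t_n)$ and the uniform $H^s$ bound at high frequencies; this contradicts the positivity of the (conserved) energy, which follows from the sharp Gagliardo--Nirenberg inequality (Lemma~\ref{LV}) and $M(u_c)<M(Q)$ in the focusing case. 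You already list exactly these ingredients, so the fix is simply to redirect them: use $H^1$ regularity, energy conservation, and $N(t_n)\to 0$ to kill the cascade, and reserve the virial identity for the soliton.
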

In fact, the result in Theorem \ref{main} is sharp. $e^{it}Q$ is the
solution to (\ref{har}) that blows up at infinity. Moreover, this
equation is invariant under the pseudo-conformal transformation
\begin{equation*}
u(t,x)\longmapsto
(i(t-T))^{-\frac{d}{2}}e^{\frac{i|x|^2}{4(t-T)}}\bar{u}\big(\frac{1}{t-T},
\frac{x}{t-T}\big).
\end{equation*}
So
$(i(t-T))^{-\frac{d}{2}}e^{\frac{i|x|^2}{4(t-T)}}e^{-\frac{i}{t-T}}Q(\frac{x}{t-T})$
is the solution that blows up at finite time $t=T$ for fixed $T$.

In the proof of the above theorem, we adapt the ideas and techniques
in \cite{KTV} and \cite{KVZ}, which represent the state of the art
in nonlinear dispersive equations. In \cite{KTV}, R. Killip, T. Tao
and M. Visan established the global well-posedness and scattering
for radial solutions to mass-critical nonlinear Schr\"odinger
equations in dimension $d=2$. R. Killip, M. Visan and X. Zhang
extended this result to higher dimensions in \cite{KVZ}. In
addition, C. E. Kenig, F. Merle dealt with the focusing
energy-critical nonlinear Schr\"odinger equation with radial data in
\cite{KeM06s}. For other related works, see S. Keraani \cite{Ker2},
T. Tao, M. Visan and X. Zhang \cite{TVZ1}, \cite{TVZ2}. Before we
state our argument, we need some definitions.
\begin{definition}[Symmetry group] For any phase
$\theta\in\mathbb{R}/2\pi\mathbb{Z}$, position $x_0\in\mathbb{R}^d$,
frequency $\xi_0\in\mathbb{R}^d$ and scaling parameter $\lambda>0$,
we define the unitary transformation $g_{\theta, \xi_0, x_0,
\lambda}: L_x^2(\mathbb{R}^d)\rightarrow L_x^2(\mathbb{R}^d)$ by the
formula
$$[g_{\theta, \xi_0, x_0,\lambda}f](x):=\frac{1}{\lambda^{d/2}}e^{i\theta}e^{ix\cdot \xi_0}f\big(\frac{x-x_0}{\lambda}\big).$$
We let $G$ be the collection of such transformations. We also let
$G_{rad}\subset G$ denote the collection of transformations in $G$
which preserve spherical symmetry, or more explicitly,
$$G_{rad}:=\{g_{\theta,0,0,\lambda}: \theta\in\mathbb{R}/2\pi\mathbb{Z}; \lambda>0\}.$$
\end{definition}
\begin{definition}[Almost periodicity modulo symmetries]\label{aps} A solution
$u$ with lifespan $I$ is said to be {\it almost periodic modulo G}
if there exist (possibly discontinous) functions $N:
I\rightarrow\Bbb R^+$, $\xi: I\rightarrow{\Bbb R}^d$, $x:
I\rightarrow{\Bbb R}^d$ and a function $C: \Bbb R^+\rightarrow\Bbb
R^+$ such that
$$\int_{|x-x(t)|\geq C(\eta)/N(t)}|u(t,x)|^2dx\leq\eta$$
and
$$\int_{|\xi-\xi(t)|\geq C(\eta)N(t)}|\hat{u}(t,\xi)|^2d\xi\leq\eta$$
for all $t\in I$ and $\eta>0$. We refer to the function $N$ as the
{\it frequency scale function} for the solution $u$, $\xi$ as the
{\it frequency center function}, and $C$ as the {\it compactness
modulus function}. Furthermore, if we can select $x(t)=\xi(t)=0$ for
all $t\in I$, then we say that $u$ is {\it almost periodic modulo
$G_{rad}$}.
\end{definition}

\begin{remark}
By Ascoli-Arzela theorem, the above definition is equivalent to
either of the following two statements:
\begin{enumerate}
\item The quotient orbit $\Big\{Gu(t): t\in I\Big\}$ is a
precompact set of $G\backslash L^2$, where $G\backslash L^2$ is the
moduli space of $G$-orbits $Gf:=\{gf: g\in G\}$ of $L^2(\Bbb R^d)$.
\item There exists a compact subset $K$ of $L^2$
such that $u(t)\in GK$ for all $t\in I$; equivalently there exists a
group function $g:I\rightarrow G$ and a compact subset $K$ such that
$g^{-1}(t)u(t)\in K$ for any $t\in I$.
\end{enumerate}
\end{remark}
Suppose for contradiction that Theorem \ref{main} is not true, then
we can find an almost periodic solution. The solution must be one of
the following three forms:
\begin{theorem}[Three special scenarios for blowup]\label{3sc} Suppose Theorem
\ref{main} failed for spherically symmetric solutions, then there
exists a maximal-lifespan solution $u$ which may be chosen to be
spherically symmetric and almost periodic modulo $G_{rad}$.
Moreover, it blows up both forward and backward in time, and in the
focusing case also obeys $M(u)<M(Q)$.

With spherical symmetry, we can also ensure that the lifespan $I$
and the frequency scale function $N: I\rightarrow\mathbb{R}^+$ match
one of the following three scenarios: \begin{enumerate}\item
(Soliton-like solution) We have $I=\mathbb{R}$ and
$$N(t)=1$$ for
all $t\in \mathbb{R}$.\\
\item (Double high-to-low frequency cascade) We have $I=\mathbb{R}$,
$$\liminf_{t\rightarrow-\infty}N(t)=\liminf_{t\rightarrow+\infty}N(t)=0,$$
and $$\sup_{t\in\mathbb{R}}N(t)<\infty.$$
\item (Self-similar solution) We have $I=(0, +\infty)$ and
$$N(t)=t^{-1/2}$$ for all $t\in I$.
\end{enumerate}
\end{theorem}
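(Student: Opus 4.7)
The plan is to run a standard concentration-compactness reduction of the Keraani/Kenig--Merle type, adapted to the mass-critical Hartree setting as in \cite{KTV, KVZ}. Define the critical mass
\begin{equation*}
m_c := \inf\{M(u) : u \text{ is a maximal-lifespan solution to (\ref{har}) that blows up forward or backward in time}\}.
\end{equation*}
If Theorem \ref{main} fails for radial data, then $0 < m_c < \infty$ (with $m_c \le M(Q)$ in the focusing case by the hypothesis $M(u)<M(Q)$). Take a sequence $u_n$ of radial maximal-lifespan solutions with $M(u_n)\to m_c$ and each $u_n$ blowing up forward (say) in time. The first task is to produce a minimal-mass radial blow-up solution $u_c$ that is almost periodic modulo $G_{rad}$.

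To do this, I would prove a Palais--Smale compactness lemma: after time-translating each $u_n$ so that half of its $L^6_tL^{6d/(3d-2)}_x$ norm is on each side, apply a radial $L^2$ profile decomposition (the Hartree analogue of Keraani's decomposition for the free propagator $e^{it\Delta}$) to the sequence $u_n(0)$. Because the data are spherically symmetric, all spatial and frequency translation parameters in the profiles can be chosen to vanish, so only phase $\theta_n$ and scale $\lambda_n$ parameters remain; this is exactly the structure of $G_{rad}$. Using the stability theory for (\ref{har}) and asymptotic $L^2$-orthogonality to superimpose the nonlinear profiles associated to each linear profile, one shows that if more than one profile carried positive mass, then each nonlinear profile has mass strictly less than $m_c$ and hence is global and spacetime-bounded, contradicting the fact that $u_n$ blows up. Thus a single nonlinear profile $u_c$ survives, has mass exactly $m_c$, blows up forward in time, and the orbit $\{G_{rad}\,u_c(t) : t\in I\}$ is precompact in $G_{rad}\backslash L^2$; equivalently $u_c$ is almost periodic modulo $G_{rad}$. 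A symmetric argument forces blow-up backward in time too.

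Once $u_c$ and its frequency scale function $N(t)$ are in hand, I would run the standard trichotomy on $N$ to reach the three scenarios. First, a normalization step: the symmetry $u\mapsto \lambda^{d/2} u(\lambda^2 t,\lambda x)$ preserves the Hartree equation and $G_{rad}$, so $N$ can be rescaled freely. Second, one controls how fast $N$ can oscillate using the local constancy property of $N$ (a consequence of almost periodicity together with the small-data/stability theory: on time intervals of length $\sim N(t)^{-2}$ the function $N$ varies by at most a bounded factor). Third, I split into cases according to whether $\limsup_{t\to\pm\infty} N(t) = \infty$ or not. If $N$ is bounded above, then either $N$ stays bounded below uniformly---in which case one rescales so that $N\equiv 1$, extends to $I = \mathbb{R}$ (ruling out finite blow-up time via the uniform frequency localization and the blow-up criterion), and gets scenario (1); or $\liminf N(t) = 0$ on both ends, yielding scenario (2). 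If instead $N$ is unbounded, one uses a ``no-rapid-cascade from low to high frequency'' argument to show that $u_c$ cannot be extended to all of $\mathbb{R}$ on the past; the maximal lifespan must be a half-line which, by time translation and rescaling, is $(0,\infty)$ with $N(t) = t^{-1/2}$, giving scenario (3).

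I expect the main obstacle to be the Palais--Smale lemma in the Hartree setting, specifically the linear and nonlinear profile decompositions. The Hartree nonlinearity $(|x|^{-2}\ast|u|^2)u$ is nonlocal, so the asymptotic orthogonality between differently scaled profiles requires care with the convolution kernel, and the nonlinear embedding of a linear profile into a true solution of (\ref{har}) demands a robust stability theory in the mass-critical $L^6_t L^{6d/(3d-2)}_x$ space. The reduction from $G$ to $G_{rad}$ is comparatively clean in the radial category, and the trichotomy of $N(t)$ then follows along the now-standard lines of \cite{KTV,KVZ} once the local-constancy of $N$ is established.
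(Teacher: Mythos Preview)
Your proposal is essentially correct and follows the same route as the paper: a Palais--Smale compactness lemma via the radial profile decomposition (where the key Hartree-specific issue is indeed the asymptotic decoupling of the nonlocal nonlinearity in the $L_t^6 L_x^{6d/(3d-2)}$ norm, handled in the paper by Lemma~\ref{lem32}), followed by the $N(t)$ trichotomy imported from \cite{KTV}. One point to sharpen: your sketch of the trichotomy omits the pseudo-conformal invariance of (\ref{har}), which the paper explicitly flags as essential---in \cite{KTV} this symmetry is what converts a solution with finite-time blowup into one on a half-line with $N(t)=t^{-1/2}$, and your ``no-rapid-cascade'' heuristic alone does not produce the self-similar form.
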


This is a wonderful classification theorem first given in \cite{KTV}
although some other authors have mentioned some of them, see
\cite{Bou}, \cite{CKSTT}, \cite{KeM06s}, \cite{Vis}, \cite{TVZ2},
etc. In view of this theorem, our goal is to preclude the
possibilities of all the scenarios.

 Note that the minimal mass blow-up solution has very good
properties because it is localized in both physical and frequency
space. In fact, it admits higher regularity.
 \begin{theorem}[Regularity in the self-similar case]\label{self-similar} Let $u$ be a
 spherically symmetric solution to (\ref{har}) that is almost
 periodic modulo $G_{rad}$ and self-similar in the sense of
 Theorem \ref{3sc}. Then $u(t)\in H^s(\mathbb{R}^d)$ for all $t\in(0,
 \infty)$ and all $s\geq0$.
 \end{theorem}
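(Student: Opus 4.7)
The plan is to show the tail $\|P_{\geq N} u(t_0)\|_{L^2_x}$ decays faster than any inverse polynomial in $N$ as $N \to \infty$, which implies $u(t_0) \in H^s$ for every $s \geq 0$. The starting point is almost periodicity modulo $G_{rad}$: at time $t$ the mass of $u(t)$ is concentrated at frequencies $|\xi|\lesssim N(t)=t^{-1/2}$, so $\|P_{\geq N} u(t)\|_{L^2_x}$ is qualitatively small whenever $N \gg t^{-1/2}$. The task is to upgrade this qualitative smallness into quantitative polynomial decay that is uniform in $t \in (0,\infty)$ after rescaling.

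First I would establish a frequency-localized Duhamel formula in which the free-evolution contribution at the endpoints of the lifespan vanishes. Since $N(t)\to\infty$ as $t\to 0^+$ and $N(t)\to 0$ as $t\to\infty$, almost periodicity forces $e^{-it\Delta}u(t)\rightharpoonup 0$ weakly in $L^2_x$ at both endpoints (the mass escapes to frequency zero or infinity). Consequently
\begin{equation*}
P_{\geq N} u(t_0) \;=\; i\lim_{T\to\infty}\int_{t_0}^{T} e^{i(t_0-t)\Delta} P_{\geq N} F(u(t))\,dt
\end{equation*}
in the weak $L^2$ sense, and analogously for an integral back from $t_0$ to $0^+$.

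Next I would pair the two representations (the double Duhamel trick) to obtain
\begin{equation*}
\|P_{\geq N} u(t_0)\|_{L^2_x}^{2} \lesssim \int_{0}^{t_0}\!\!\int_{t_0}^{\infty} \bigl|\langle e^{-i(t-t_0)\Delta} P_{\geq N}F(u(t)),\, e^{-i(s-t_0)\Delta} P_{\geq N}F(u(s))\rangle\bigr|\,dt\,ds.
\end{equation*}
The inner product is then bounded using Strichartz estimates, the radial/weighted dispersive improvements that are available under spherical symmetry, and the Hardy--Littlewood--Sobolev inequality for the Hartree kernel $|x|^{-2}$. A Littlewood--Paley decomposition of $F(u)=\mu(|x|^{-2}*|u|^{2})u$ shows that $P_{\geq N} F(u)$ is nontrivial only when at least one of the three factors of $u$ (or the convolution output) carries frequency $\gtrsim N$. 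Trading that high-frequency factor for a gain from $\|P_{\gtrsim N}u(t)\|_{L^{2}_x}$, which is already small by almost periodicity (and by induction on the previous step of the iteration), yields, schematically, a recursion of the form
\begin{equation*}
\mathcal{M}(N) \;\lesssim\; C^{-1}\mathcal{M}(\eta N) \;+\; \text{better},
\end{equation*}
where $\mathcal{M}(N):=\sup_{t>0}\|P_{\geq N\cdot N(t)}u(t)\|_{L^{2}_x}$. Iterating this produces super-polynomial decay, and combining with the self-similar scaling $N(t)=t^{-1/2}$ converts the decay into $H^s$ bounds for any $s\geq 0$.

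The main obstacle I anticipate is the nonlocal convolution $|x|^{-2}*|u|^{2}$. Unlike the local power nonlinearities treated in \cite{KTV,KVZ}, the Hartree nonlinearity couples frequencies nonlocally through the Riesz-type kernel, so the paradifferential decomposition must be supplemented by bilinear estimates that dyadically split the kernel and combine Bernstein, Hardy--Littlewood--Sobolev, and the radial gain in dispersive decay to properly localize the output frequency. A secondary subtlety is justifying the double Duhamel pairing rigorously (as a weak $L^2$ limit) for a solution which does not scatter at either endpoint, but the self-similar endpoint behavior of $N(t)$ makes the weak-$\ast$ vanishing at $t\to 0^+$ and $t\to\infty$ available from the almost-periodicity definition, and self-similar scale invariance ensures the iteration constants are uniform so the argument closes.
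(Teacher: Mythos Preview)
Your outline diverges from the paper's approach and has a gap at the central nonlinear step. The paper does not use a double Duhamel pairing in the self-similar case; instead it tracks three coupled quantities $\mathcal{M}(A)$, $\mathcal{S}(A)$, $\mathcal{N}(A)$ (high-frequency mass, Strichartz, and nonlinearity tails, normalized to the self-similar scale $N(t)=t^{-1/2}$) and closes a recursion in three pieces: a forward Duhamel toward $t=+\infty$, where $P_{\geq A T^{-1/2}}u(t)\to 0$ \emph{strongly}, bounds $\mathcal{M}(A)$ by $\sum_k \mathcal{N}(2^{k/2}A)$; a backward Duhamel toward $t=0^+$, where only the \emph{low}-frequency projection vanishes strongly, combined with the dispersive estimate and interpolation extracts an initial polynomial gain for $\mathcal{S}$; and a nonlinear estimate ties $\mathcal{N}$ back to $\mathcal{S}$ and $\mathcal{M}$.

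The gap concerns that nonlinear estimate, specifically the contribution $(V\ast|u_{\mathrm{lo}}|^{2})\,u_{\mathrm{hi}}$ to $P_{\geq N}F(u)$. You propose to absorb the convolution via ``bilinear estimates that dyadically split the kernel,'' but the paper explains exactly why bilinear Strichartz is useless here: the potential $V\ast|u_{\mathrm{lo}}|^{2}$ decouples the low-frequency inputs from $u_{\mathrm{hi}}$, so there is no product $u_{\mathrm{lo}}u_{\mathrm{hi}}$ on which to spend the bilinear gain. The paper's replacement is Shao's radial Strichartz estimate $\|P_{N}e^{it\Delta}f\|_{L^{q}_{t,x}}\lesssim N^{d/2-(d+2)/q}\|f\|_{L^{2}}$ for $q>\frac{4d+2}{2d-1}$, together with its dual and Christ--Kiselev, which places $u_{\mathrm{hi}}$ (and the retarded Duhamel piece) in $L^{q}_{t,x}$ with a negative power of the frequency and produces the needed $A^{-1/(2d+6)}$ gain. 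Nothing in your sketch supplies an equivalent device, so your recursion has no quantitative small factor on this term. A secondary issue is that the backward leg of your double Duhamel runs over $(0,t_0]$, where the solution concentrates at ever higher frequency; the dyadic sum of dual-Strichartz norms of $P_{\geq N}F(u)$ over $[2^{-k-1}t_0,2^{-k}t_0]$ does not decay in $k$, so it is not clear the pairing converges to something useful without further input.
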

 \begin{theorem}[Regularity in the global case]\label{tm51}Let $u$ be a global
spherically symmetric solution to (\ref{har}) that is almost
periodic modulo $G_{rad}$. Suppose also that $N(t)\lesssim 1$ for
all $t\in\Bbb R$, then $u\in L_t^\infty H^s(\Bbb R\times{\Bbb R}^d)$
for all $s\geq0$.
\end{theorem}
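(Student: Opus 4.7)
The plan is to adapt the bootstrap-in-frequency argument of Killip--Tao--Visan and Killip--Visan--Zhang for the mass-critical Schr\"odinger equation to the Hartree setting. The goal is to establish polynomial decay
\[
\|P_{\geq N} u\|_{L^\infty_t L^2_x} \lesssim_s N^{-s}, \qquad N \gg 1,
\]
for every $s > 0$, from which $u \in L^\infty_t H^s$ follows by summing Littlewood--Paley pieces together with mass conservation on low frequencies. The starting point is that almost periodicity modulo $G_{rad}$, combined with $N(t) \lesssim 1$ and radial symmetry (so $\xi(t) = 0$), forces $\|P_{\geq N} u(t)\|_{L^2_x}$ to be small uniformly in $t$ once $N$ is large; this gives an initial, non-quantitative smallness that will feed the bootstrap.

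The main tool is the \emph{double Duhamel} trick. Since $u$ blows up in both time directions, standard profile-decomposition arguments rule out a surviving free part, so $e^{i(t-T)\Delta} u(T) \rightharpoonup 0$ weakly in $L^2_x$ as $T \to \pm\infty$. Consequently, for $N$ large,
\[
P_{\geq N} u(t) = -i \int_{-\infty}^t P_{\geq N} e^{i(t-s)\Delta} F(u(s))\, ds = i\int_t^\infty P_{\geq N} e^{i(t-s)\Delta} F(u(s))\, ds
\]
in the weak $L^2$ sense. Pairing these two representations yields
\[
\|P_{\geq N} u(t)\|_{L^2_x}^2 = \int_t^\infty\!\!\int_{-\infty}^t \bigl\langle P_{\geq N} e^{i(s_2-s_1)\Delta} F(u(s_1)),\, P_{\geq N} F(u(s_2))\bigr\rangle \, ds_1\, ds_2.
\]
I would estimate the right-hand side by exploiting the pointwise dispersive decay of $P_{\geq N} e^{i\tau\Delta}$, sharpened on radial functions via the in/out decomposition of Killip--Tao--Visan and Killip--Visan--Zhang, which furnishes a kernel integrable in $\tau$ with a favorable power of $N$.

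For the nonlinear factor, the Hartree nonlinearity $F(u) = \mu(|x|^{-2}* |u|^2) u$ is handled by Hardy--Littlewood--Sobolev applied to the Riesz potential combined with a paraproduct-type Littlewood--Paley decomposition of the three instances of $u$. Splitting $u = P_{\leq N/8} u + P_{>N/8} u$, each high--high--low or high--low--low interaction contributing to $P_{\geq N} F(u)$ can be bounded by a favorable power of $\|P_{\geq cN} u\|_{L^\infty_t L^2_x}$ times Strichartz norms of $u$ that are finite by the a priori mass bound and local theory. Combined with the time integral from the double Duhamel, this produces an inequality of the schematic form
\[
\|P_{\geq N} u\|_{L^\infty_t L^2_x}^2 \lesssim N^{-\sigma}\, \|P_{\geq cN} u\|_{L^\infty_t L^2_x}^{\alpha} + (\text{lower-order tails}),
\]
with some $\sigma,\alpha > 0$, which iterates off the initial smallness from almost periodicity to give polynomial decay at arbitrary rate.

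The principal obstacle, relative to the pointwise nonlinearity $|u|^{4/d}u$ treated in the Schr\"odinger papers, is the nonlocal convolution: the Riesz potential mixes all frequencies of $|u|^2$, so a simple Littlewood--Paley trichotomy no longer suffices to isolate where the high frequency ``lives.'' One must track how the projection $P_{\geq N}$ interacts with the potential, use Hardy--Littlewood--Sobolev to trade two derivatives for integrability on the $|u|^2$ factor, and then redistribute frequencies between the potential and the outer factor of $u$ so that enough of the $N^{-\sigma}$ gain survives while remaining within admissible Strichartz exponents in dimension $d \geq 3$. Identifying the correct exponent pair for the double Duhamel time integration, so that the radial dispersive gain is integrable in both $s_1$ and $s_2$, is where the bulk of the technical work will lie.
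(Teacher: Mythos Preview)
Your high-level plan---define $\mathcal{M}(N)=\|u_{\geq N}\|_{L^\infty_t L^2_x}$, use almost periodicity for initial smallness, then iterate a recursive inequality to polynomial decay---matches the paper. But the central device is misidentified. The paper does \emph{not} use the double Duhamel pairing you wrote down. Following KTV/KVZ, it applies the in/out decomposition directly to the function, $u_{\geq N}(0)=P^+u_{\geq N}(0)+P^-u_{\geq N}(0)$, and then represents $P^+u_{\geq N}(0)$ by the \emph{forward} Duhamel integral alone (Corollary~\ref{co1}) and $P^-u_{\geq N}(0)$ by the backward one, bounding each $L^2$ norm separately via (\ref{513}); there is no inner-product pairing of two Duhamel integrals. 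The reason this split works is that the kernel $[P^+_M e^{-it\Delta}](x,y)$ is rapidly decaying for $t>0$ away from the cone $|y|-|x|\sim Mt$, while on that cone $|y|\gtrsim M|t|$ is large, so the nonlinearity there is controlled by the weighted Strichartz estimate of Lemma~\ref{ws}. For $|x|\leq N^{-1}$, where $P^\pm$ are singular, a plain one-sided Duhamel with the ordinary $P_M$ kernel is used instead (see (\ref{31}) and (\ref{32})). The recursion obtained is $\mathcal{M}(N)\lesssim_u\eta\,\mathcal{M}(N/64)$ for $N$ large depending on $\eta$, not a squared inequality with an explicit $N^{-\sigma}$.

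The gap in your sketch is at ``furnishes a kernel integrable in $\tau$.'' At mass-critical scaling the dispersive decay from $L^{2d/(d+2)}$ to $L^{2d/(d-2)}$ is $|\tau|^{-1}$, which is not integrable, and the in/out kernel bounds are only small \emph{off} the propagation cone; on the cone one gets no extra $\tau$-decay from the kernel and must instead exploit that $F(u)$ is evaluated at large $|y|$. That is precisely what the weighted norm $\||x|^{2(d-1)/q}u\|_{L^q_t L^{2q/(q-4)}_x}$ supplies in the paper's main-contribution estimate (Lemma~\ref{tm54}), and it is the Hartree-specific ingredient the introduction singles out for the nonlocal term $(V*|u|^2)\tilde P_{>M/8}u$ and its companion $u\,\tilde P_{>M/8}(V*|u|^2)$. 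Your proposal never invokes weighted Strichartz; Hardy--Littlewood--Sobolev together with a frequency trichotomy will handle the short-time piece (as in Lemma~\ref{tm53}) but will not by itself make the long-time integral converge in the double-Duhamel formula as you have written it.
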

In the proofs of these two theorems for mass-critical Schr\"odinger
equations in \cite{KTV}, the radial assumption is fully exploited
based on a careful observation that there is a dichotomy between
scattering solutions and almost periodic solutions. There are
similar results for the mass-critical Hartree equation. More
precisely, one has:
\begin{proposition}\label{p2} Let $u: I\times\Bbb R^d\rightarrow\Bbb
C$ be a maximal-lifespan solution which is almost periodic modulo
$G$. Then $e^{-it\Delta}u(t)$ is weakly convergent to zero in
$L_x^2(\Bbb R^d)$ as $t\rightarrow\sup(I)$ or $t\rightarrow\inf(I)$.
\end{proposition}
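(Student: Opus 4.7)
The plan is to argue by contradiction, combining the precompactness supplied by almost periodicity modulo $G$ with the dispersion of the free Schr\"odinger group and, in the borderline sub-case, with the maximality of the lifespan. By the time-reversal symmetry $u(t,x)\mapsto\overline{u(-t,x)}$ of \eqref{har}, it suffices to treat $t\to\sup(I)$. Using the Plancherel identity $\langle e^{-it\Delta}u(t),\varphi\rangle=\langle u(t),e^{it\Delta}\varphi\rangle$, the uniform bound $\|u(t)\|_{L^2}\equiv M(u)^{1/2}$, and density of $\mathcal{S}(\mathbb{R}^d)$ in $L^2$, the claim reduces to showing that $\langle u(t),e^{it\Delta}\varphi\rangle\to 0$ as $t\to\sup(I)$ for every $\varphi\in\mathcal{S}$. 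Suppose this fails: choose $\varphi\in\mathcal{S}$, $\delta>0$, and $t_n\to\sup(I)$ with $|\langle u(t_n),e^{it_n\Delta}\varphi\rangle|\geq\delta$.

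By the almost periodicity hypothesis, there exist $g_n=g_{\theta_n,\xi_n,x_n,\lambda_n}\in G$ such that $\{g_n^{-1}u(t_n)\}$ is precompact in $L^2$; after passing to a subsequence, $g_n^{-1}u(t_n)\to\psi$ strongly, so $u(t_n)=g_n\psi+o_{L^2}(1)$. The intertwining identity
\[e^{it\Delta}\,g_{\theta_0,\xi_0,x_0,\lambda_0}=g_{\theta_0-t|\xi_0|^2,\,\xi_0,\,x_0+2t\xi_0,\,\lambda_0}\,e^{it/\lambda_0^2\,\Delta},\]
which follows directly from Galilean and scaling covariance of the free propagator, lets us rewrite
\[\langle u(t_n),e^{it_n\Delta}\varphi\rangle=\langle\psi,\,e^{is_n\Delta}\tilde g_n\varphi\rangle+o(1),\qquad s_n:=t_n/\lambda_n^2,\ \tilde g_n\in G,\]
with the parameters of $\tilde g_n$ computable explicitly in terms of $(\theta_n,\xi_n,x_n,\lambda_n,t_n)$.

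I would then perform a case analysis on $(s_n,\tilde g_n)$. If $|s_n|\to\infty$, the dispersive estimate $\|e^{is_n\Delta}\tilde g_n\varphi\|_{L^{p'}}\lesssim|s_n|^{-d(1/p-1/2)}\|\tilde g_n\varphi\|_{L^p}$ (for any $p\in[1,2]$), combined with truncation of $\psi\in L^2$ into a compactly-supported piece plus a small $L^2$ remainder and with the scaling $\|\tilde g_n\varphi\|_{L^p}=\lambda_n^{d(1/2-1/p)}\|\varphi\|_{L^p}$, forces the pairing to zero. If $s_n$ stays bounded but some parameter of $\tilde g_n$ escapes (scale to $0$ or $\infty$, frequency to $\infty$, or spatial shift to $\infty$), classical weak-decay-under-escape arguments---Riemann--Lebesgue for modulations, support-shifting for translations, and mass-diluting rescalings---applied to the fixed Schwartz $\varphi$ yield $\tilde g_n\varphi\rightharpoonup 0$ in $L^2$, and the pairing with $\psi$ again tends to $0$. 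In the remaining borderline case---$s_n$ bounded and all parameters of $\tilde g_n$ bounded---the relation $t_n=s_n\lambda_n^2$ with $\lambda_n$ bounded above and below forces $t_n$ bounded, hence $\sup(I)<\infty$; further subsequence extraction gives $g_n\to g_\infty\in G$ strongly, so $u(t_n)\to g_\infty\psi$ strongly in $L^2$. Applying the local well-posedness theorem at $g_\infty\psi$ (exploiting smallness of $\|e^{it\Delta}(g_\infty\psi)\|_{L_t^6L_x^{6d/(3d-2)}}$ on sufficiently small intervals) and continuous dependence in $L^2$ then extends $u$ as a genuine solution of \eqref{har} past $\sup(I)$, contradicting the maximality of the lifespan. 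The principal obstacle lies precisely in this last sub-case: converting compactness of the symmetry parameters into $L^2$ strong convergence, and then into extensibility, hinges on a careful invocation of the local theory.
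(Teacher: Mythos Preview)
The paper does not supply a proof of this proposition; it is stated without argument, presumably as a known result imported from the general theory (cf.\ \cite{TVZ1}, \cite{KTV}). Your contradiction argument---extracting a strong limit $\psi$ via almost periodicity, rewriting the pairing through the intertwining identity, and splitting into the three sub-cases $(|s_n|\to\infty)$, $(s_n$ bounded, a parameter of $\tilde g_n$ escapes$)$, $(s_n$ and all parameters bounded$)$---is precisely the standard route and is correct.

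Two points in your sketch deserve slightly more care. In case~(a) the dispersive bound actually gives
\[
\|e^{is_n\Delta}\tilde g_n\varphi\|_{L^{p'}}\lesssim(|s_n|\lambda_n)^{-d(1/p-1/2)}\|\varphi\|_{L^p},
\]
since the scale of $\tilde g_n$ is $1/\lambda_n$; one must then check that $|s_n|\to\infty$ forces $|s_n|\lambda_n=|t_n|/\lambda_n\to\infty$ (after translating time so that $\sup(I)>0$, split into $\lambda_n\ge 1$ and $\lambda_n<1$). In case~(c), the extension past $\sup(I)$ follows cleanly from the fact that the local existence time in Theorem~1.1 depends only on the $L^2$-profile of the data: since $u(t_n)\to g_\infty\psi$ in $L^2$, the solutions with data $u(t_n)$ at time $t_n$---which by uniqueness coincide with $u$---persist on $[t_n,t_n+\eta]$ for a fixed $\eta>0$ and all large $n$, contradicting the blowup criterion at the finite endpoint $\sup(I)$.
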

As a corollary of Proposition \ref{p2}, we have
\begin{corollary}[A Duhamel formula]\label{co1} Let $u$ be a solution
 to (\ref{har}) which is almost periodic
 modulo $G$. Its maximal-lifespan is $I$. Then for all
$t\in I$, \begin{align}\label{duh} u(t)&=\lim_{T\nearrow \sup
I}i\int_t^Te^{i(t-t')\Delta}F(u(t'))dt'\\
&=-\lim_{T\searrow \inf I}i\int_t^Te^{i(t-t')\Delta}F(u(t'))dt'
\end{align}
as weak limit in $L_x^2$.
\end{corollary}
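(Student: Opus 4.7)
The plan is to deduce the Duhamel identity directly from the strong-solution Duhamel formula by sending the reference time to the endpoints of the lifespan, and to use Proposition \ref{p2} to dispose of the free-evolution term as a weak limit in $L^2_x$. The only real content is therefore a change of reference point plus a weak-convergence computation.

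More explicitly, fix $t\in I$ and an arbitrary $T\in I$ with $T>t$. Because $u$ is a strong $L^2$ solution in the sense of Definition 1.1, the Duhamel formula applied with $t_0:=T$ gives
\begin{equation*}
u(t)=e^{i(t-T)\Delta}u(T)+i\int_t^T e^{i(t-t')\Delta}F(u(t'))\,dt',
\end{equation*}
so it suffices to show $e^{i(t-T)\Delta}u(T)\rightharpoonup 0$ in $L_x^2(\mathbb{R}^d)$ as $T\nearrow\sup I$. Testing against an arbitrary $\phi\in L_x^2(\mathbb{R}^d)$ and using that $e^{is\Delta}$ is unitary on $L_x^2$,
\begin{equation*}
\bigl\langle e^{i(t-T)\Delta}u(T),\phi\bigr\rangle
=\bigl\langle e^{-iT\Delta}u(T),\,e^{-it\Delta}\phi\bigr\rangle.
\end{equation*}
Since $e^{-it\Delta}\phi\in L_x^2(\mathbb{R}^d)$ is a fixed test function and Proposition \ref{p2} asserts that $e^{-iT\Delta}u(T)\rightharpoonup 0$ in $L_x^2$ as $T\nearrow\sup I$, the right-hand side tends to $0$. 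Hence $e^{i(t-T)\Delta}u(T)\rightharpoonup 0$ weakly in $L_x^2$, and consequently
\begin{equation*}
i\int_t^T e^{i(t-t')\Delta}F(u(t'))\,dt'\;\rightharpoonup\; u(t)\quad\text{in }L_x^2(\mathbb{R}^d)
\end{equation*}
as $T\nearrow\sup I$, which is the first identity in \eqref{duh}. The backward-in-time identity follows by the same argument with $T<t$, $T\searrow\inf I$; the sign flip comes from reversing the limits of the time integral.

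There is essentially no technical obstacle: for each \emph{finite} $T\in I$ the Duhamel integral makes perfectly good sense in $L_x^2$ by the strong-solution framework (so no issue of Bochner integrability arises before passing to the limit), and the whole content of the corollary is the weak vanishing of the free evolution, which has been moved into Proposition \ref{p2}. The mildly subtle point worth flagging is that the convergence in \eqref{duh} is only weak in $L_x^2$ rather than strong, reflecting exactly the fact that $e^{i(t-T)\Delta}u(T)$ need not converge strongly to zero even though its weak limit vanishes.
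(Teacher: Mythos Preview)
Your proof is correct and is exactly the intended deduction: the paper itself does not spell out a proof of Corollary~\ref{co1} but simply declares it ``a corollary of Proposition~\ref{p2}'', and your argument (rewrite Duhamel with reference time $T$, then kill the free piece $e^{i(t-T)\Delta}u(T)$ weakly via Proposition~\ref{p2} after the unitary identity $\langle e^{i(t-T)\Delta}u(T),\phi\rangle=\langle e^{-iT\Delta}u(T),e^{-it\Delta}\phi\rangle$) is precisely that corollary made explicit. Nothing is missing.
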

There are some new difficulties in dealing with the mass-critical
Hartree equation. One of them comes from the asymptotic
orthogonality. In the study of the mass-critical Hartree equation,
we have to use the non-symmetric spacetime norm because the
symmetric spacetime norm will lead to the restriction on dimension.
However, the orthogonality can be destroyed by the non-symmetric
spacetime norm.

We illustrate this by considering the simple example: Let
$\varphi_1$ and $\varphi_2$ are two bump functions in $\Bbb
R\times\Bbb R^2$. Let $x_n^1, x_n^2\in\Bbb R^2$ be such that
$|x_n^1-x_n^2|\rightarrow\infty$ as $n\rightarrow\infty$. Then we
have
\begin{equation}\label{11}\|\varphi_1(t,x+x_n^1)+\varphi_2(t,x+x_n^2)\|_{L_{t,x}^4}^4\rightarrow
\|\varphi_1\|_{L_{t,x}^4}^4+\|\varphi_2\|_{L_{t,x}^4}^4,
\end{equation} as $n\rightarrow\infty$. However, if we replace
$L_{t,x}^4$ with $L_t^6L_x^3$, then
\begin{align*}
&\|\varphi_1(t,x+x_n^1)+\varphi_2(t,x+x_n^2)\|_{L_t^6L_x^3}^6\\
=&\int\Big(\int|\varphi_1(t,x+x_n^1)+\varphi_2(t,x+x_n^2)|^3
dx\Big)^2dt\\
=&\int\Big(\int\big(|\varphi_1(t,x+x_n^1)|^3+|\varphi_2(t,x+x_n^2)|^3\\
&\quad+
3|\varphi_1(t,x+x_n^1)|^2|\varphi_2(t,x+x_n^2)|+3|\varphi_1(t,x+x_n^1)||\varphi_2(t,x+x_n^2)|^2\big)
dx\Big)^2dt\\
\rightarrow&\int\Big(\int\big(|\varphi_1(t,x+x_n^1)|^3+|\varphi_2(t,x+x_n^2)|^3\big)
dx\Big)^2dt\\
=&\int\Big(\int|\varphi_1(t,x)|^3dx\Big)^2dt+\int\Big(\int|\varphi_2(t,x)|^3dx\Big)^2dt\\
&\quad\quad+2\int\Big(\int|\varphi_1(t,x)|^3dx\Big)\Big(\int|\varphi_2(t,x)|^3dx\Big)dt\\
\nrightarrow&
\|\varphi_1\|_{L_t^6L_x^3}^6+\|\varphi_2\|_{L_t^6L_x^3}^6, \quad
\text{as}\ \ n\rightarrow\infty.
\end{align*}
Fortunately, for radial solution of the free Schr\"odinger equation,
there are only two kinds of orthogonality - time translation and
scaling ({\bf NO} spatial translation!), both of which are possessed
by time variable. So the orthogonality can be exploited and get the
desired orthogonal relation similar to (\ref{11}) (see Section 3).
So the radial assumption is necessary to prove Theorem \ref{3sc},
which is in contrast to \cite{KTV}, where the similar theorem was
established without the radial assumption. Such assumption is also
used in precluding the three enemies in the sense of Theorem
\ref{3sc}.

Some other difficulties coming from the convolution in the
nonlinearity lie on the fact that it's non-local in physical space
and singular in frequency space. For example, in precluding the
self-similar solution, we need to deal with terms such as
$\big(V*|u_{lo}^2|\big)u_{hi}$, where $\hat{u}_{lo}$ is supported in
$\{\xi: |\xi|\leq M\}$ and $\hat{u}_{hi}$ is supported in $\{\xi:
|\xi|\geq N\}$. In \cite{KTV}, the corresponding term
$|u_{hi}|^2u_{lo}$ can be estimated by means of bilinear estimate
(Lemma \ref{bi}). However, the convolution prevents the direct
interaction between $u_{lo}$ and $u_{hi}$ in Hartree equation, so
the bilinear estimate cannot be applied. In fact, to overcome the
difficulty we exploit Shao's estimate (Lemma \ref{sha}) and its dual
estimate in full strength to replace the bilinear estimates (see
Section 5). Meanwhile, we adapt weighted Strichartz estimate
(\ref{st}) in obtaining the additional regularity for the double
high-to-low frequency cascade and soliton-like solutions, where the
non-locality of the nonlinearity forces us to apply such estimates
in different regions (see Section 6).

The rest of the paper is organized as follows: In Section 2, we
record some known results such as basic facts in harmonic analysis,
various versions of Strichartz estimates and in/out decomposition.
In Section 3, we give the stability theory and the concentration
compactness result. In Section 4, we show that any failure of
Theorem \ref{main} must be ``caused'' by almost periodic solutions.
In Section 5, we preclude the self-similar solution by proving that
it possesses additional regularity. In Section 6, we prove the
additional regularity in the other two cases. In Section 7 and
Section 8, we preclude the double high-to-low frequency and
soliton-like solutions.

\section{Preliminaries}

\subsection{Some Notations}
We use $X\lesssim Y$ or $Y\gtrsim X$ whenever $X\leq CY$ for some
constant $C>0$. We use $O(Y)$ to denote any quantity $X$ such that
$|X|\lesssim Y$. We use the notation $X\sim Y$ whenever $X\lesssim
Y\lesssim X$. If $C$ depends upon some additional parameters, we
will indicate this with subscripts; for example, $X\lesssim_u Y$
denotes the assertion that $X\leq C_uY$ for some $C_u$ depending on
$u$. We use the `Japanese bracket' convention $\langle
x\rangle=(1+|x|^2)^{1/2}$. We write $L_t^qL_x^r$ to denote the
Banach space with norm
$$\|u\|_{L_t^qL_x^r(\mathbb{R}\times\mathbb{R}^3)}:=\Big(
\int_{\mathbb{R}}\big(\int_{\mathbb{R}^3}|u(t,x)|^rdx\big)^{q/r}dt\Big)^{1/q},$$
with the usual modifications when $q$ or $r$ are equal to infinity,
or when the domain $\mathbb{R}\times\mathbb{R}^d$ is replaced by
spacetime slab such as $I\times\mathbb{R}^d$. When $q=r$ we
abbreviate $L_t^qL_x^q$ as $L_{t,x}^q$.
\subsection{Basic harmonic analysis}
We recall some basic facts in Littlewood-Paley theory. Let
$\varphi(\xi)$ be a radial bump function supported in the ball
$\{\xi\in\mathbb{R}^d: |\xi|\leq\frac{11}{10}\}$ and equal to 1 on
the ball $\{\xi\in\mathbb{R}^d: |\xi|\leq1\}$. For each number
$N>0$, we define the Fourier multipliers
\begin{align*}
\widehat{P_{\leq N}f}(\xi)&:=\varphi(\xi/N)\hat{f}(\xi),\\
\widehat{P_{\geq N}f}(\xi)&:=(1-\varphi(\xi/N))\hat{f}(\xi),\\
\widehat{P_{N}f}(\xi)&:=(\varphi(\xi/N)-\varphi(2\xi/N))\hat{f}(\xi)
\end{align*}
and similarly $P_{<N}$ and $P_{\geq N}$. We also define
$$P_{M<\cdot\leq N}:=P_{\leq N}-P_{\leq M}=\sum_{M<N'<N}P_{N'}$$
whenever $M<N$. We will usually use these multipliers when $M$ and
$N$ are dyadic numbers; in particular, all summations over $N$ or
$M$ are understood to be over dyadic numbers. Nevertheless, it will
occasionally be convenient to allow $M$ and $N$ to not be a power of
2. Note that $P_N$ is not truly a projection; to get around this, we
will occasionally need to use fattened Littlewood-Paley operators:
\begin{equation}
\tilde{P}_N:=P_{N/2}+P_N+P_{2N}.
\end{equation}
They obey $P_N\tilde{P}_N=\tilde{P}_NP_N=P_N$.

As all Fourier multipliers, the Littlewood-Paley operators commute
with the propagator $e^{it\Delta}$, as well as with differential
operators such as $i\partial_t+\Delta$. We will use basic properties
of these operators many times, including
\begin{lemma}[Bernstein estimates]\label{Bern} For $1\leq p\leq q\leq\infty$,
\begin{align*}\||\nabla|^{\pm s}P_Nf\|_{L_x^p(\mathbb{R}^d)}&\sim
N^{\pm s}\|P_Nf\|_{L_x^p(\mathbb{R}^d)},\\
\|P_{\leq N}f\|_{L_x^q(\mathbb{R}^d)}&\lesssim
N^{\frac{d}{p}-\frac{d}{q}}\|P_{\leq N}f\|_{L_x^p(\mathbb{R}^d)},\\
\|P_{N}f\|_{L_x^q(\mathbb{R}^d)}&\lesssim
N^{\frac{d}{p}-\frac{d}{q}}\|P_{N}f\|_{L_x^p(\mathbb{R}^d)}.
\end{align*}
\end{lemma}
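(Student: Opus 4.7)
The plan is to represent each Littlewood-Paley operator as convolution with a rescaled Schwartz kernel and then invoke Young's convolution inequality. Since $\varphi$ is a radial bump, its inverse Fourier transform $\check\varphi$ is Schwartz; writing $K_N(x):=N^d\check\varphi(Nx)$ one has $P_{\le N}f=K_N*f$. Similarly, letting $\psi(\xi):=\varphi(\xi)-\varphi(2\xi)$ (a Schwartz bump supported in an annulus $|\xi|\sim 1$) and $\tilde K_N(x):=N^d\check\psi(Nx)$, one has $P_N f=\tilde K_N*f$. A scaling change of variables gives $\|K_N\|_{L^r_x}\sim N^{d(1-1/r)}$ and likewise for $\tilde K_N$, uniformly in $N$, for every $1\le r\le\infty$.

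For the $L^p\to L^q$ bounds, Young's inequality $\|f*g\|_{L^q_x}\le\|f\|_{L^r_x}\|g\|_{L^p_x}$ with $1+1/q=1/r+1/p$ gives $\|P_{\le N}f\|_{L^q_x}\lesssim N^{d/p-d/q}\|f\|_{L^p_x}$ after substituting $d(1-1/r)=d/p-d/q$, and similarly for $P_N$. To replace $f$ by $P_{\le N}f$ (respectively $P_N f$) on the right, I invoke the fattened multipliers: since $\tilde P_N$ has symbol equal to $1$ on the Fourier support of $P_N$, one has $P_N f=\tilde P_N P_N f$, and the same kernel bound applied to $\tilde P_N$ acting on the input $P_Nf$ produces the stated inequality; an analogous construction with a fattened low-frequency cutoff $\tilde P_{\le N}:=P_{\le 2N}$ handles $P_{\le N}$.

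For the derivative equivalence $\||\nabla|^{\pm s}P_N f\|_{L^p_x}\sim N^{\pm s}\|P_N f\|_{L^p_x}$, observe that $\psi$ is supported in an annulus bounded away from the origin, so $m_\pm(\xi):=|\xi|^{\pm s}\psi(\xi)$ is Schwartz. The symbol of $|\nabla|^{\pm s}P_N$ is $|\xi|^{\pm s}\psi(\xi/N)=N^{\pm s}m_\pm(\xi/N)$, and the corresponding kernel is $N^{\pm s}\cdot N^d\check m_\pm(N\cdot)$, whose $L^1_x$ norm is $\sim N^{\pm s}$; Young's inequality with $r=1$ then yields the upper bound $\||\nabla|^{\pm s}P_N f\|_{L^p_x}\lesssim N^{\pm s}\|P_N f\|_{L^p_x}$. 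Running the argument with $s$ replaced by $-s$ and applied to $|\nabla|^{\pm s}P_N f$ (using $\tilde P_N$ to absorb the extra $P_N$) gives the reverse inequality. The whole argument reduces to Schwartz-class kernel estimates and Young's inequality, with the only mild care being the upgrade from $\|f\|_{L^p_x}$ to $\|P_N f\|_{L^p_x}$ on the right, resolved by the fattened-projector identity $P_N\tilde P_N=P_N$ recorded in the excerpt; no step is a genuine obstacle.
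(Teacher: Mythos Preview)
Your proof is correct and is the standard argument via Schwartz kernels and Young's inequality. The paper itself does not prove this lemma at all: it simply records the Bernstein estimates as a basic, well-known fact about Littlewood--Paley projections without any argument, so there is nothing to compare against beyond noting that you have supplied the standard proof the paper omits.
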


\subsection{Strichartz estimates}
Naturally, everything that we do for Hartree equation builds on
basic properties of the linear propagator $e^{it\Delta}$.

From the explicit formula $$e^{it\Delta}f(x)=\frac{1}{(4\pi
it)^{d/2}}\int_{\mathbb{R}^d}e^{i|x-y|^2/4t}f(y)dy,$$ we deduce the
standard dispersive inequality
\begin{equation}\label{de}\|e^{it\Delta}f\|_{L^p(\mathbb{R}^d)}
\lesssim\frac{1}{|t|^{d(1/2-1/p)}}\|f\|_{L^{p'}(\mathbb{R}^d)}\end{equation}
for all $t\neq0$ and $2\leq p\leq\infty$.

\begin{lemma}[Kernel estimates, \cite{KTV}] For any $m\geq0$, the kernel of the
linear propagator obeys the following estimates: \begin{equation}
|(P_Ne^{it\Delta})(x,y)|\lesssim_m \left\{
\aligned|t|^{-d/2}\quad\quad\quad\quad\quad&:|x-y|\sim Nt\\
\frac{N^d}{|N^2t|^m\langle N|x-y|\rangle^m}&:\text{otherwise}
\endaligned
\right.
\end{equation}
for $|t|\geq N^{-2}$ and
\begin{equation}|(P_Ne^{it\Delta})(x,y)|\lesssim_m N^d\langle N|x-y|\rangle^{-m}
\end{equation}
for $|t|\leq N^{-2}$.\end{lemma}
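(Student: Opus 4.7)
The plan is to represent the kernel as an oscillatory integral and treat the stationary-phase and non-stationary regimes separately. With $\psi(\eta) := \varphi(\eta) - \varphi(2\eta)$ a radial bump supported on $|\eta|\sim 1$, Fourier inversion and the substitution $\xi = N\eta$ give
\begin{equation*}
(P_N e^{it\Delta})(x,y) = \frac{N^d}{(2\pi)^d}\int_{\mathbb{R}^d} e^{iN\Phi(\eta)}\psi(\eta)\,d\eta,\qquad \Phi(\eta) := (x-y)\cdot\eta - tN|\eta|^2,
\end{equation*}
so that the gradient $\nabla(N\Phi)(\eta) = N(x-y) - 2tN^2\eta$ vanishes for some $|\eta|\sim 1$ precisely when $|x-y|\sim Nt$.

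In the regime $|t|\ge N^{-2}$ with $|x-y|\sim Nt$, the Hessian $-2tN^2 I$ is non-degenerate, and the standard method of stationary phase (or equivalently, completing the square in $\xi$ as in the explicit Fresnel formula for $e^{it\Delta}$) gives $\int e^{iN\Phi(\eta)}\psi(\eta)\,d\eta = O((N^2|t|)^{-d/2})$, which combines with the prefactor $N^d$ to produce $|t|^{-d/2}$. For $|t|\ge N^{-2}$ with $|x-y|\not\sim Nt$, one of the two terms in $N(x-y) - 2tN^2\eta$ dominates the other by a definite factor on $\{|\eta|\sim 1\}$, so $|\nabla(N\Phi)|\gtrsim N|x-y|+N^2|t|$ there. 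Integrating by parts $2m$ times against $|\nabla(N\Phi)|^{-2}\nabla(N\Phi)\cdot\nabla_\eta$ and using the uniform bounds on derivatives of $\psi$ yields
\begin{equation*}
\bigl|(P_N e^{it\Delta})(x,y)\bigr|\lesssim_m \frac{N^d}{(N|x-y|+N^2|t|)^{2m}} \lesssim_m \frac{N^d}{\langle N|x-y|\rangle^{m}\,|N^2 t|^{m}},
\end{equation*}
where the last step uses $(a+b)^{2m}\ge a^m b^m$ together with $|N^2 t|\ge 1$ to upgrade $(N|x-y|)^{-m}$ to $\langle N|x-y|\rangle^{-m}$ in the subcase $N|x-y|<1$ (where one discards one factor of $|N^2 t|^{-m}$, which is harmless since it is at most $1$).

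The short-time regime $|t|\le N^{-2}$ is simpler: on $\{|\eta|\sim 1\}$ the quadratic term $tN|\eta|^2$ of $N\Phi$ is bounded, hence $e^{iN\Phi(\eta)}\psi(\eta) = e^{iN(x-y)\cdot\eta} h(\eta)$ for a Schwartz bump $h$ with seminorms uniform in $t$ and $N$; the $\eta$-integral is therefore a rescaled Fourier transform $\widehat{h}(N(x-y))$, which decays faster than any polynomial and gives the desired $N^d\langle N|x-y|\rangle^{-m}$. The main obstacle I anticipate is the bookkeeping in the non-stationary intermediate regime: extracting decay simultaneously in $\langle N|x-y|\rangle$ and $|N^2 t|$ from a single lower bound on $|\nabla(N\Phi)|$ requires care, and the $(a+b)^{2m}\ge a^m b^m$ trick, while elementary, is the step that must be deployed at exactly the right moment to avoid losing either factor.
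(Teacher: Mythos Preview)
The paper does not supply its own proof of this lemma; it is quoted from \cite{KTV} and used as a black box. Your argument is the standard stationary/non-stationary phase proof that underlies the result in \cite{KTV}, and it is correct. A couple of small remarks: the quadratic part of $N\Phi$ is $-tN^2|\eta|^2$, not $tN|\eta|^2$ as written in the short-time paragraph (this is a typo; your conclusion that it is $O(1)$ when $|t|\le N^{-2}$ remains valid). Also, in the regime $|x-y|\sim N|t|$ one does not actually need stationary phase: the bound $|t|^{-d/2}$ follows trivially from the explicit Fresnel kernel $|e^{it\Delta}(x,y)|=(4\pi|t|)^{-d/2}$ together with the fact that $P_N$ is convolution with an $L^1$-normalized bump, which is essentially what you indicate parenthetically. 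The integration-by-parts bookkeeping in the non-stationary regime is sound because the Hessian of $N\Phi$ has size $N^2|t|\le N|x-y|+N^2|t|$, so each application of the transpose of $|\nabla(N\Phi)|^{-2}\nabla(N\Phi)\cdot\nabla_\eta$ gains a full factor of $(N|x-y|+N^2|t|)^{-1}$; your $(a+b)^{2m}\ge a^mb^m$ trick then separates the two decay factors as claimed.
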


\begin{lemma}[Strichartz estimates, \cite{Tao}]\label{stri} Fix $d\geq1$ and call a pair $(q,r)$ admissible
 if $2\leq q,r\leq\infty$, $\frac{2}{q}+\frac{d}{r}=\frac{d}{2}$ and $(q,r,d)\neq (2,\infty,2)$. Then for any
admissible pair $(q,r)$ and $(\tilde{q}, \tilde{r})$, let $I$ be an
interval, let $t_0\in I$, and let $u_0\in L_x^2(\mathbb{R}^d)$ and
$f\in L_{t}^{\tilde{q}'}L_x^{\tilde{r}'}$. Then the function $u$
defined by
\begin{equation}\label{24}u(t):=e^{i(t-t_0)\Delta}u_0-i\int_{t_0}^te^{i(t-t')\Delta}f(t')dt'\end{equation}
obeys the estimate
\begin{equation}\label{tt}\|u\|_{L_t^qL_x^r}\lesssim
\|u_0\|_{L_x^2}+\|f\|_{L_{t}^{\tilde{q}'}L_x^{\tilde{r}'}},\end{equation}
where all spacetime norms are over $I\times\mathbb{R}^d$.
\end{lemma}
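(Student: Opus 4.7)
The plan is to follow the Keel--Tao $TT^*$ scheme. First I would reduce the statement to the purely homogeneous Strichartz bound
$$\|e^{it\Delta}u_0\|_{L_t^qL_x^r(\mathbb{R}\times\mathbb{R}^d)} \lesssim \|u_0\|_{L_x^2}$$
for every admissible pair $(q,r)$. By duality and the $TT^*$ identity, this is equivalent to the bilinear form estimate
$$\Bigl|\iint \langle e^{i(t-s)\Delta} F(s), G(t)\rangle_{L_x^2}\,ds\,dt\Bigr| \lesssim \|F\|_{L_t^{q'}L_x^{r'}}\|G\|_{L_t^{q'}L_x^{r'}}.$$
The kernel $e^{i(t-s)\Delta}$ satisfies the dispersive estimate (\ref{de}) with $p=r$, which interpolated with $L^2$-conservation yields the operator norm bound $\|e^{i(t-s)\Delta}\|_{L_x^{r'}\to L_x^r} \lesssim |t-s|^{-d(1/2-1/r)}$.

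For the non-endpoint cases $q>2$, the admissibility relation $\tfrac{2}{q}+\tfrac{d}{r}=\tfrac{d}{2}$ forces $d(\tfrac12-\tfrac1r) = \tfrac{2}{q}<1$, so the Hardy--Littlewood--Sobolev inequality applied in the time variable closes the bilinear estimate immediately. The main obstacle is the endpoint $(q,r)=(2,\tfrac{2d}{d-2})$ when $d\geq 3$, where the HLS exponent is exactly $1$ and direct integration would diverge logarithmically. Here I would invoke the Keel--Tao device: decompose dyadically $|t-s|\sim 2^j$, bound each piece by bilinearly interpolating between the dispersive bound at exponent $r$ and the trivial $L_x^2\to L_x^2$ energy estimate, and then assemble the dyadic pieces via real interpolation between appropriately chosen atomic (or Lorentz) spaces in the time variable so that the logarithmic divergence is absorbed.

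With the homogeneous estimate in hand, the full inhomogeneous version (\ref{tt}) for independent admissible pairs $(q,r)$ and $(\tilde q,\tilde r)$ is obtained in two steps. First, composing the homogeneous bound with its dual controls the untruncated operator $\int_{\mathbb{R}} e^{i(t-t')\Delta}f(t')\,dt'$ in $L_t^qL_x^r$ by $\|f\|_{L_t^{\tilde q'}L_x^{\tilde r'}}$. Then the Christ--Kiselev lemma (applicable whenever $q>\tilde q'$) inserts the retarded cutoff $t_0\leq t'\leq t$ without loss. In the purely doubly-endpoint case, where Christ--Kiselev fails, one must apply the bilinear Keel--Tao argument directly to the retarded kernel. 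Finally, combining the homogeneous and inhomogeneous bounds through the triangle inequality in the Duhamel formula (\ref{24}) yields (\ref{tt}), and passing from $\mathbb{R}$ to an arbitrary interval $I$ is trivial by extending $f$ by zero.
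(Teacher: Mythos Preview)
Your outline is a correct sketch of the standard Keel--Tao proof of the Strichartz estimates. Note, however, that the paper does not give its own proof of this lemma: it is quoted as a known result from the reference \cite{Tao} (Tao's CBMS monograph), so there is nothing to compare against. The argument you describe---$TT^*$ reduction, dispersive estimate plus Hardy--Littlewood--Sobolev in time for the non-endpoint pairs, the Keel--Tao dyadic/bilinear interpolation for the endpoint $(2,\tfrac{2d}{d-2})$, and Christ--Kiselev for the retarded inhomogeneous term---is exactly the proof one finds in that reference, so your proposal is both correct and aligned with the cited source.
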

\begin{lemma}[Weighted Strichartz, \cite{KVZ}]\label{ws}Let $I$ be an interval, let $t_0\in I$, $u_0\in L_x^2(\mathbb{R}^d)$ and $f\in L_{t}^2L_x^\frac{2d}{d+2}$ be
spherically symmetric. Then the function $u$ defined by (\ref{24})
obeys the estimate
\begin{equation}\label{st}\big\||x|^\frac{2(d-1)}{q}u\big\|_{L_t^qL_x^\frac{2q}{q-4}
(I\times\mathbb{R}^d)}\lesssim
\|u_0\|_{L_x^2(\mathbb{R}^d)}+\big\|f\big\|_{L_{t}^2L_x^\frac{2d}{d+2}(I\times\Bbb
R^d)}
\end{equation}
for all $4\leq q\leq \infty$.
\end{lemma}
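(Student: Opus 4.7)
The plan is to reduce (\ref{st}) to its two endpoints $q=\infty$ and $q=4$, treat them separately, and then interpolate. At $q=\infty$ the weight $|x|^{2(d-1)/q}$ collapses to $1$ and $\tfrac{2q}{q-4}$ becomes $2$, so (\ref{st}) reduces to
\begin{equation*}
\|u\|_{L_t^\infty L_x^2(I\times\mathbb R^d)}\lesssim \|u_0\|_{L_x^2}+\|f\|_{L_t^2L_x^{2d/(d+2)}(I\times\mathbb R^d)},
\end{equation*}
which is the standard inhomogeneous Strichartz bound of Lemma \ref{stri} applied with admissible pairs $(q,r)=(\infty,2)$ and $(\tilde q,\tilde r)=(2,\tfrac{2d}{d-2})$. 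The genuinely new content of the lemma is therefore concentrated at the opposite endpoint $q=4$, where one must show
\begin{equation*}
\big\||x|^{(d-1)/2}u\big\|_{L_t^4L_x^\infty(I\times\mathbb R^d)}\lesssim \|u_0\|_{L_x^2}+\|f\|_{L_t^2L_x^{2d/(d+2)}(I\times\mathbb R^d)}
\end{equation*}
under the radial hypothesis on $u_0$ and $f$.

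For the $q=4$ endpoint I would first dispose of the homogeneous component $u=e^{i(t-t_0)\Delta}u_0$, exploiting spherical symmetry through the Strauss radial lemma: for radial $\phi$ on $\mathbb R^d$ one has $r^{(d-1)/2}|\phi(r)|\lesssim\|\phi\|_{L_x^2}^{1/2}\|\nabla\phi\|_{L_x^2}^{1/2}$, and Littlewood--Paley decomposition upgrades this to the Besov-type bound $\||x|^{(d-1)/2}\phi\|_{L_x^\infty}\lesssim\sum_N N^{1/2}\|P_N\phi\|_{L_x^2}$. Combined with the fact that $e^{it\Delta}$ acts on radial functions through the Hankel transform, one extracts the smoothing estimate $\||x|^{(d-1)/2}e^{it\Delta}u_0\|_{L_t^4L_x^\infty}\lesssim\|u_0\|_{L_x^2}$ either by Bessel-function / stationary-phase analysis of the radial kernel, or by a $TT^\ast$ argument that reduces matters to a one-dimensional Hardy--Littlewood--Sobolev inequality in the time variable. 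The inhomogeneous Duhamel piece is then recovered from the dual Strichartz estimate together with the Christ--Kiselev lemma, which is applicable because the time exponents $2<4$ are unequal and therefore promotes the untruncated bilinear bound to the retarded integral in (\ref{24}).

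With both endpoints in hand, the full range $4\leq q\leq\infty$ follows by complex interpolation of the analytic family of maps $(u_0,f)\mapsto|x|^{2(d-1)/q}u$ between $q=\infty$ and $q=4$; the linear structure on the right-hand side is respected at every stage. The principal obstacle is the $q=4$ endpoint itself: the pair $(4,\infty)$ sits well outside the $L^2$-admissible Strichartz range, so the entire gain over the standard theory must come from the spherical symmetry, through the Strauss inequality together with the Bessel-function structure of the radial propagator. Once that smoothing estimate is secured, interpolation and Christ--Kiselev dispatch the remaining cases mechanically.
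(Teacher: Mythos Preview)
The paper does not prove this lemma; it is quoted from \cite{KVZ} without argument, so there is no in-paper proof to compare against. Your outline is the standard route and matches what is done in \cite{KVZ}: reduce to the two endpoints, take $q=\infty$ from Lemma~\ref{stri}, and obtain $q=4$ from the radial dispersive bound
\[
\big\||x|^{(d-1)/2}e^{it\Delta}u_0\big\|_{L_t^4L_x^\infty}\lesssim\|u_0\|_{L_x^2},
\]
after which H\"older (or complex interpolation) in $t$ and $x$ gives the full range and Christ--Kiselev handles the Duhamel term.

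One comment on the $q=4$ step: the Strauss inequality alone only yields $\||x|^{(d-1)/2}e^{it\Delta}u_0\|_{L_x^\infty}\lesssim\|u_0\|_{\dot H^{1/2}}$ uniformly in $t$, which is a half-derivative short and does not by itself produce $L_t^4$ integrability from $L_x^2$ data; the Littlewood--Paley refinement you wrote down suffers the same defect, since $\|P_Ne^{it\Delta}u_0\|_{L_x^2}$ is constant in $t$. The argument that actually works is the second one you mention: the $TT^\ast$ / stationary-phase route, where one shows that the weighted radial kernel $|x|^{(d-1)/2}[e^{it\Delta}](|x|,|y|)|y|^{(d-1)/2}$ obeys a one-dimensional dispersive bound $O(|t|^{-1/2})$ (the Bessel asymptotics effectively reduce $d$ dimensions to one), and then Hardy--Littlewood--Sobolev in $t$ gives $L_t^{4/3}\to L_t^4$. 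So your sketch is correct provided you lean on that second mechanism rather than the first.
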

\begin{lemma}[Shao's Strichartz estimate, \cite{Shao}]\label{sha} Let $d\geq2$, for $f\in
L_{rad}^2(\mathbb{R}^d)$, we have
\begin{equation}\label{shao}\big\|P_Ne^{it\Delta}f\big\|_{L_{t,x}^q(\Bbb R\times\Bbb R^d)}\lesssim_q
N^{\frac{d}{2}-\frac{d+2}{q}}\|f\|_{L_x^2(\mathbb{R}^d)},
\end{equation}
provided $q>\frac{4d+2}{2d-1}$.
\end{lemma}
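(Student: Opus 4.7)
The plan is to reduce the estimate to a one–dimensional oscillatory integral problem by exploiting the Bessel representation of the radial Fourier transform, and then to run a Stein–Tomas style $TT^{*}$ argument on the resulting kernel. Throughout, radial symmetry is essential because it lowers the ``effective dimension'' of the paraboloid, which is exactly what brings the admissible exponent down from the standard Strichartz threshold $2(d+2)/d$ to Shao's threshold $(4d+2)/(2d-1)$.

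First, I would scale out $N$. Setting $f_N(x):=N^{d/2}f(Nx)$ and rescaling $t\mapsto N^{-2}t$, $x\mapsto N^{-1}x$, both sides of \eqref{shao} transform homogeneously, so it suffices to prove the estimate for $N=1$. After this reduction, I would use the fact that for a radial $L^{2}$ function $f$, the propagator admits the one–dimensional representation
\begin{equation*}
\bigl(P_{1}e^{it\Delta}f\bigr)(x)=c_{d}\,|x|^{-(d-2)/2}\int_{0}^{\infty} e^{-it\rho^{2}}\,\varphi(\rho)\,\widehat{f}_{\mathrm{rad}}(\rho)\,J_{\nu}(|x|\rho)\,\rho^{d/2}\,d\rho,\qquad \nu=\tfrac{d-2}{2},
\end{equation*}
where $\widehat{f}_{\mathrm{rad}}$ is the radial profile of $\widehat f$ and $\varphi$ is the Littlewood–Paley cutoff.

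Second, I would run a $TT^{*}$ argument. The desired bound $\|P_{1}e^{it\Delta}f\|_{L^{q}_{t,x}}\lesssim\|f\|_{L^{2}_{x}}$ for radial $f$ is, by duality and composition, equivalent to the boundedness $L^{q'}_{t,x}\to L^{q}_{t,x}$ of the radial kernel
\begin{equation*}
K\bigl(t-s,\,r_{1},r_{2}\bigr)=c_{d}\,(r_{1}r_{2})^{-(d-2)/2}\int_{0}^{\infty} e^{-i(t-s)\rho^{2}}\varphi(\rho)^{2}J_{\nu}(r_{1}\rho)J_{\nu}(r_{2}\rho)\,\rho\,d\rho,
\end{equation*}
with $r_{i}=|x_{i}|$. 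Using the small–argument bound $J_{\nu}(r)=O(r^{\nu})$ and the large–argument asymptotic
$J_{\nu}(r)=\sqrt{2/\pi r}\,\cos(r-\tfrac{\nu\pi}{2}-\tfrac{\pi}{4})+O(r^{-3/2})$, one obtains a piecewise pointwise kernel estimate: in the regime where $r_{1},r_{2}\gtrsim 1$, a stationary phase analysis in $\rho$ around the critical points where $2(t-s)\rho\pm r_{1}\pm r_{2}=0$ produces a decay factor $|t-s|^{-1/2}(r_{1}r_{2})^{-(d-1)/2}$; in the other regimes, integrating by parts in $\rho$ (or simply using the compactness of $\varphi$) gives rapid decay in $|t-s|$ and polynomial control in $r_{i}$.

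Third, I would feed the kernel estimate into a weighted Hardy–Littlewood–Sobolev inequality on the half–line (using the $r^{d-1}dr$ measure in place of Lebesgue on $\mathbb{R}^{d}$). The one–dimensional nature of the kernel after radial reduction is the whole point: the Stein–Tomas exponent for a compact curve (an effectively $1$–dimensional surface) in $\mathbb{R}^{d+1}$ is precisely $(4d+2)/(2d-1)$, and this is exactly what the HLS / Young's inequality computation yields once the kernel bound above is in place. Interpolation with the trivial bound coming from $(P_{1}e^{it\Delta})(P_{1}e^{-is\Delta})^{*}: L^{1}\to L^{\infty}$ (dispersive estimate) at the endpoint, plus the $L^{2}\to L^{2}$ bound, would close the range $q>(4d+2)/(2d-1)$.

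The main technical obstacle I expect is the stationary phase / kernel analysis in Step 3: the integral defining $K$ has to be estimated uniformly across the several regimes governed by the relative sizes of $r_{1}$, $r_{2}$ and the Bessel transition scales $\rho r_{i}\sim 1$, and one must control the contributions where the two stationary points of the $\rho$–phase nearly coincide without losing the sharp power $(r_{1}r_{2})^{-(d-1)/2}$. Once that uniform kernel estimate is secured, the remaining HLS/interpolation step is standard, and the strict inequality $q>(4d+2)/(2d-1)$ reflects exactly the failure of $TT^{*}$ at the endpoint of the underlying one–dimensional restriction problem.
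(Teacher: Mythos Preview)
The paper does not prove this lemma at all: it is stated as Lemma~2.4 and attributed to Shao~\cite{Shao}, with no argument supplied. So there is no ``paper's own proof'' to compare against; the authors simply quote the result and use it as a black box in Section~5.

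Your sketch is along the correct lines and is essentially the strategy in Shao's original paper: rescale to $N=1$, use the Bessel representation of the radial Fourier transform to reduce the $d$-dimensional propagator to a one-dimensional oscillatory integral, run a $TT^{*}$ argument, and close with stationary phase on the kernel together with a weighted Hardy--Littlewood--Sobolev/interpolation step. The heuristic that radial symmetry collapses the paraboloid restriction problem to an effectively one-dimensional one, thereby improving the Strichartz threshold from $2(d+2)/d$ to $(4d+2)/(2d-1)$, is exactly the point of Shao's result. The one place where your outline is genuinely underspecified is Step~3: the uniform kernel bound across the regimes $r_i\lesssim 1$ and $r_i\gtrsim 1$, and especially the treatment of nearly-coalescing stationary points, requires real work (Shao handles this via a careful dyadic decomposition in the radial variable and an analytic-interpolation argument rather than a direct HLS bound). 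But as a roadmap your proposal is sound, and since the present paper only cites the lemma, there is nothing further to compare.
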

\begin{lemma}[Bilinear Strichartz, \cite{Bou2},\cite{CKSTT}] \label{bi}For any spacetime slab
$I\times\mathbb{R}^d$, any $t_0\in I$ and any $M$, $N>0$, we have
\begin{align*}
\big\|(P_{\geq N}u)(P_{\leq M}v)\big\|_{L^2_{t,x}(I\times\Bbb
R^d)}\lesssim_q
&\frac{M^\frac{d-1}{2}}{N^{-\frac{1}{2}}}\Big(\|P_{\geq
N}u(t_0)\|_{L^2(\Bbb R^d)}+\|(i\partial_t+\Delta)P_{\geq
N}u\|_{L_{t}^2L_x^\frac{2d}{d+2}(I\times\Bbb
R^d)}\Big)\\
&\quad\times\Big(\|P_{\leq M}v(t_0)\|_{L^2(\Bbb
R^d)}+\|(i\partial_t+\Delta)P_{\leq
M}v\|_{L_{t}^2L_x^\frac{2d}{d+2}(I\times\Bbb R^d)}\Big)
\end{align*}
for all functions $u$, $v$ on $I\times\Bbb R^d$.
\end{lemma}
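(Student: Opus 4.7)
The plan is to reduce the claim to an estimate for free (linear) solutions and then execute an explicit space-time Fourier computation exploiting the transversality of two Schr\"odinger paraboloids whose frequency supports are widely separated. I will assume without loss of generality that $M \leq N/4$; in the complementary regime $M \gtrsim N$, the bound follows directly from H\"older combined with the Strichartz estimate of Lemma~\ref{stri} at an admissible pair. The reduction step is the standard one: for each factor invoke the Duhamel representation
\begin{equation*}
P_{\geq N}u(t) = e^{i(t-t_0)\Delta}P_{\geq N}u(t_0) - i\int_{t_0}^t e^{i(t-s)\Delta}(i\partial_s+\Delta)P_{\geq N}u(s)\,ds
\end{equation*}
(and analogously for $P_{\leq M}v$), expand the product into four pieces, then apply Minkowski in the inner time variable of each Duhamel integral together with the Christ-Kiselev lemma to remove the retarded cutoff. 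Duality between $L_t^2 L_x^{2d/(d+2)}$ and the admissible pair $L_t^2 L_x^{2d/(d-2)}$ absorbs the forcing norms into effective $L_x^2$ initial-data contributions, so the whole matter reduces to proving, for free solutions $U(t)=e^{it\Delta}f$ and $V(t)=e^{it\Delta}g$ with $\hat f$ supported in $\{|\xi|\geq N\}$ and $\hat g$ in $\{|\xi|\leq M\}$, the clean bilinear bound $\|UV\|_{L^2_{t,x}}\lesssim M^{(d-1)/2}N^{-1/2}\|f\|_{L^2}\|g\|_{L^2}$.

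For the homogeneous estimate, a direct computation gives
\begin{equation*}
\mathcal F_{t,x}[UV](\tau,\zeta) = \int \hat f(\xi)\,\hat g(\zeta-\xi)\,\delta\bigl(\tau + |\xi|^2 + |\zeta-\xi|^2\bigr)\,d\xi,
\end{equation*}
so that for fixed $\zeta$ the constraint $|\xi|^2+|\zeta-\xi|^2=-\tau$ cuts out a sphere in $\xi$-space centered at $\zeta/2$; the coarea Jacobian there equals $|\nabla_\xi(|\xi|^2+|\zeta-\xi|^2)| = 2|2\xi-\zeta| = 2|\xi_1-\xi_2|$, where $\xi_1=\xi$, $\xi_2=\zeta-\xi$. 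The support restrictions force $|\xi_1|\geq N$ and $|\xi_2|\leq M\leq N/4$, so the triangle inequality gives $|\xi_1-\xi_2|\gtrsim N$, while the portion of the sphere meeting $|\xi_2|\leq M$ lies in $B(\zeta,M)$ and hence has surface measure $\lesssim M^{d-1}$ uniformly in $\zeta,\tau$. Applying Cauchy-Schwarz on this slice with these two bounds, unwinding the coarea formula in reverse, and invoking Plancherel in $(\tau,\zeta)$ then produces
\begin{equation*}
\|UV\|_{L^2_{t,x}}^2 \;\lesssim\; \frac{M^{d-1}}{N}\,\|f\|_{L^2}^2\,\|g\|_{L^2}^2,
\end{equation*}
exactly the sought free-solution bound.

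To close the argument I would decompose $P_{\geq N}u = \sum_{N'\geq N}P_{N'}u$ dyadically and apply the single-scale estimate at each $N'$. The key observation is that, because $M\leq N/4$, the product $(P_{N'}u)(P_{\leq M}v)$ has spatial Fourier support in the annulus $|\zeta|\sim N'$, and these annuli are disjoint for distinct dyadic $N'$; consequently the $L^2_{t,x}$-norms combine in an $\ell^2$ fashion and reassemble into $\|P_{\geq N}u(t_0)\|_{L^2}^2 = \sum_{N'\geq N}\|P_{N'}u(t_0)\|_{L^2}^2$ on the right-hand side (with the forcing handled analogously). The hard part is the geometric estimate at the heart of step two: one must verify that the constraint $|\xi_2|\leq M$ cuts out a piece of $(d-1)$-area at most $M^{d-1}$ on the constraint sphere \emph{uniformly} in $(\zeta,\tau)$, regardless of how large the sphere's radius is. This uniformity is what ultimately delivers the gain $M^{(d-1)/2}N^{-1/2}$ rather than a weaker scale-free bound, and it relies crucially on containment of the slice in the Euclidean ball $B(\zeta,M)$ in $\xi_1$-space.
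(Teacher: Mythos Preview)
The paper does not supply its own proof; the lemma is quoted from \cite{Bou2} and \cite{CKSTT}. Your approach---the Fourier-side transversality computation for free solutions followed by a Duhamel reduction---is the standard route in those references, and your homogeneous estimate (coarea, Cauchy--Schwarz on the constraint sphere, the uniform bound $|\xi_1-\xi_2|\gtrsim N$ on the Jacobian, and the $\ell^2$-orthogonal dyadic recombination over $N'\geq N$) is correct and cleanly argued.

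There is, however, one genuine gap in your reduction step. The Christ--Kiselev lemma requires the source time exponent to be \emph{strictly} less than the target time exponent; with forcing in $L^2_tL^{2d/(d+2)}_x$ and bilinear output in $L^2_tL^2_x$, both time exponents equal $2$ and Christ--Kiselev does not apply. The ``Minkowski in the inner time variable'' you mention delivers the bound only with $L^1_tL^2_x$ on the forcing (apply the free bilinear estimate at each fixed $s$, then integrate in $s$), not with the endpoint dual norm. Reaching the endpoint $L^2_tL^{2d/(d+2)}_x$ stated here requires the more delicate argument carried out in \cite{CKSTT}---essentially a duality manoeuvre that shifts the retarded integral onto the test function and then invokes the linear endpoint inhomogeneous Strichartz estimate of Keel--Tao (Lemma~\ref{stri})---or, in modern language, the transference of the free bilinear bound to $U^2_\Delta$ atoms combined with the embedding of Duhamel integrals into $V^2_\Delta$. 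You should either carry out one of these explicitly or relax the forcing norm to a non-endpoint dual Strichartz pair $L^{q'}_tL^{r'}_x$ with $q'<2$, where your Christ--Kiselev argument goes through verbatim.
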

\subsection{An in/out decomposition}
We define the projection onto outgoing spherical waves by
$$[P^+f](r)=\frac{1}{2}\int_0^\infty r^{\frac{2-d}{2}}H^{(1)}_{\frac{d-2}{2}}(kr)\hat{f}(k)k^\frac{d}{2}dk$$
and the projection onto incoming spherical waves by
$$[P^-f](r)=\frac{1}{2}\int_0^\infty r^{\frac{2-d}{2}}H^{(2)}_{\frac{d-2}{2}}(kr)\hat{f}(k)k^\frac{d}{2}dk,$$
where $H^{(1)}_{\frac{d-2}{2}}$ denotes the Hankel function of the
first kind with order $\frac{d-2}{2}$ and $H^{(2)}_{\frac{d-2}{2}}$
denotes the Hankel function of the second kind with order
$\frac{d-2}{2}$. We will write $P^\pm_N$ for the product $P^\pm
P_N$.
\begin{lemma}[Kernel estimates, \cite{KVZ}] For $|x|\gtrsim N^{-1}$ and $t\gtrsim
N^{-2}$, the integral kernel obeys
\begin{equation}
\big|[P_N^\pm e^{\mp it\Delta}](x,y)\big|\lesssim \left\{ \aligned
(|x||y|)^{-\frac{d-1}{2}}|t|^{-\frac{1}{2}}\quad\space\quad\quad\quad\quad\quad\quad\quad\quad\quad\quad&: |y|-|x|\sim Nt\\
\frac{N^d}{(N|x|)^{\frac{d-1}{2}}\langle
N|y|\rangle^{\frac{d-1}{2}}}\langle
N^2t+N|x|-N|y|\rangle^{-m}&:\text{otherwise}
\endaligned
\right.
\end{equation}
for any $m\geq0$. For $|x|\gtrsim N^{-1}$ and $|t|\lesssim N^{-2}$,
the integral kernel obeys
\begin{equation*}\big|[P_N^\pm e^{\mp it\Delta}](x,y)\big|\lesssim\frac{N^d}{(N|x|)^{\frac{d-1}{2}}\langle
N|y|\rangle^{\frac{d-1}{2}}}\langle N|x|-N|y|\rangle^{-m}
\end{equation*}
for any $m\geq0$.
\end{lemma}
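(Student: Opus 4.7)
The plan is to realize $P_N^\pm e^{\mp it\Delta}$ as a one-dimensional oscillatory integral via the Fourier--Hankel calculus for radial functions, and then to extract the two claimed bounds by combining the Debye asymptotics of the Hankel functions with stationary and non-stationary phase. By complex conjugation it suffices to treat $P_N^+ e^{-it\Delta}$.

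First I write the kernel. Using Fourier inversion in polar coordinates together with the identity $J_\nu = \tfrac{1}{2}(H^{(1)}_\nu + H^{(2)}_\nu)$, and recalling that $P^+$ retains the $H^{(1)}$ summand while $e^{-it\Delta}$ has Fourier symbol $e^{itk^2}$, the kernel of $P_N^+ e^{-it\Delta}$ (acting on radial functions) takes the schematic form
\[
K(x,y) = c_d\,(|x||y|)^{-(d-2)/2}\int_0^\infty H^{(1)}_{(d-2)/2}(k|x|)\,J_{(d-2)/2}(k|y|)\,e^{itk^2}\,\psi_N(k)\,k\,dk,
\]
where $\psi_N$ is the Littlewood--Paley symbol supported in $k\sim N$. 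Since $|x|\gtrsim N^{-1}$ and $k\sim N$, I insert the large-argument asymptotic $H^{(1)}_\nu(z)\sim c\,z^{-1/2}e^{i(z-\nu\pi/2-\pi/4)}$. For $|y|$ I split the analysis: when $|y|\gtrsim N^{-1}$ the analogous asymptotic applies to both Hankel summands of $J_\nu(k|y|)$; when $|y|\ll N^{-1}$ I instead use the small-argument estimate $|J_\nu(z)|\lesssim z^\nu$ together with the observation $\langle N|y|\rangle^{-(d-1)/2}\sim 1$, so the crude size already fits the target. Expanding $J_{(d-2)/2}(k|y|)$ into its two Hankel pieces produces two oscillatory integrals $\int e^{i\phi_\sigma(k)}a_\sigma(k)\,dk$ with phases $\phi_\sigma(k) = tk^2 + k|x| + \sigma k|y|$ for $\sigma\in\{+1,-1\}$ and smooth amplitudes of order $(|x||y|)^{-1/2}$; absorbing the prefactor $(|x||y|)^{-(d-2)/2}$ recovers the spatial weight $(|x||y|)^{-(d-1)/2}$ that appears in both bounds.

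The two alternatives in the statement then correspond exactly to the geometry of the critical points of $\phi_\sigma$. The phase $\phi_-$ has stationary point $k_\ast = (|y|-|x|)/(2t)$, which lies inside $\mathrm{supp}\,\psi_N$ precisely when $|y|-|x|\sim Nt$; in this single resonant case stationary phase (using $\phi''=2t$) supplies an extra factor $|t|^{-1/2}$, reproducing the ``good'' bound $(|x||y|)^{-(d-1)/2}|t|^{-1/2}$. In every other situation, the phase $\phi_+$ has no positive critical point for $t>0$, and the non-resonant part of $\phi_-$ satisfies $|N\phi'_\sigma(k)|\gtrsim \langle N^2 t + N|x| - N|y|\rangle$ uniformly on $k\sim N$; iterated integration by parts against $(i\phi'_\sigma)^{-1}\partial_k$ then supplies arbitrarily many factors of $\langle N^2 t + N|x| - N|y|\rangle^{-1}$, yielding the ``otherwise'' bound after folding in the amplitude size. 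The short-time case $|t|\lesssim N^{-2}$ is simpler still: the quadratic piece $tk^2$ varies by $O(1)$ on the support of $\psi_N$ and may be absorbed into the amplitude, leaving only the linear phase $k(|x|\pm|y|)$ to drive the non-stationary analysis, which directly yields the second claimed estimate.

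The main obstacle is bookkeeping rather than substance: one must keep track of the two Hankel summands of $J_\nu(k|y|)$, of the small-/large-argument dichotomy in $|y|$, and of the remainders in the Debye expansions, and check that the $N$-powers assemble correctly into the stated prefactors. All of this is standard oscillatory-integral analysis in the style of Stein and is carried out in detail in \cite{KVZ}; the Hartree setting introduces no new feature here since only the linear Schr\"odinger propagator is involved.
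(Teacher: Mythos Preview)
The paper does not prove this lemma at all; it is stated with attribution to \cite{KVZ} and no argument is supplied. Your sketch---writing the kernel via the Hankel transform, inserting the large-argument asymptotics of $H^{(1)}_\nu$ and $J_\nu$, and then running stationary/non-stationary phase on the one-dimensional integrals $\int e^{i(tk^2 + k|x| \pm k|y|)} a(k)\,dk$---is exactly the approach used in \cite{KVZ} (and earlier in \cite{KTV} for $d=2$), and is correct. There is therefore nothing to contrast; you have essentially reconstructed the proof from the cited source, and your own final paragraph acknowledges as much.
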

\renewcommand{\labelenumi}{{\rm(}\roman{enumi}{\rm)}}
\begin{lemma}[Properties of $P^\pm$, \cite{KVZ}]We have
\begin{enumerate}
\item $P^++P^-$ acts as the identity on $L^2_{rad}(\mathbb{R}^d)$.
\item Fix $N>0$. For any spherically symmetric function $f\in
L^2_{x}(\mathbb{R}^d)$, $$\big\|P^\pm P_{\geq
N}f\big\|_{L_x^2(|x|\geq\frac{1}{100}N^{-1})}\lesssim\|f\|_{L^2_{x}(\mathbb{R}^d)}$$
with an $N$-independent constant.
\end{enumerate}
\end{lemma}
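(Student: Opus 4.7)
For part (i), the plan is to invoke the Hankel representation of the $d$-dimensional Fourier inversion formula for radial functions,
\begin{equation*}
f(r) = \int_0^\infty r^{(2-d)/2}\, J_{(d-2)/2}(kr)\, \hat f(k)\, k^{d/2}\, dk,
\end{equation*}
which arises from integrating the plane wave $e^{ix\cdot\xi}$ over the sphere $S^{d-1}$. Together with the classical identity $H^{(1)}_{(d-2)/2}(z) + H^{(2)}_{(d-2)/2}(z) = 2\, J_{(d-2)/2}(z)$, the definitions of $P^\pm$ immediately yield $(P^+ + P^-)f = f$.

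For part (ii), the plan is to apply Schur's test on the region $\{|x|\geq N^{-1}/100\}$, using the kernel bound from the preceding lemma at $t=0$ (which lies in the regime $|t|\leq N^{-2}$). For each dyadic $N'$, the estimate
\begin{equation*}
|[P^\pm_{N'}](x,y)| \lesssim \frac{(N')^d}{(N'|x|)^{(d-1)/2}\,\langle N'|y|\rangle^{(d-1)/2}}\,\langle N'|x|-N'|y|\rangle^{-m}
\end{equation*}
holds for $|x|\geq (N')^{-1}/100$ and any $m\geq 0$. Choosing $m$ sufficiently large (say $m\geq d+1$), passing to radial variables, and rescaling $u=N'|x|$, $v=N'|y|$ reduces both Schur integrals to a scale-free estimate of the form $\sup_{u\geq 1/100} \int_0^\infty v^{d-1}\, u^{-(d-1)/2}\,\langle v\rangle^{-(d-1)/2}\,\langle u-v\rangle^{-m}\, dv \lesssim 1$, which is verified directly by splitting into the regions $v\sim u$ and $|v-u|\gg 1$. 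Schur's test then yields the per-band bound $\|P^\pm_{N'}f\|_{L^2(|x|\geq (N')^{-1}/100)}\lesssim\|f\|_{L^2}$ with constant independent of $N'$; for $N'\geq N$ the inclusion $\{|x|\geq N^{-1}/100\}\subset\{|x|\geq (N')^{-1}/100\}$ transfers the inequality to the set appearing in the statement.

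To sum over dyadic bands in $P^\pm P_{\geq N}=\sum_{N'\geq N}P^\pm P_{N'}$, I would exploit that $P^\pm$ commutes with the radial Fourier multiplier $P_{N'}$, so each summand $P^\pm P_{N'}f=P_{N'}(P^\pm f)$ has $d$-dimensional Fourier support in the annulus $\{|\xi|\sim N'\}$ and the summands are therefore pairwise orthogonal in $L^2(\mathbb{R}^d)$ across distant dyadic scales. Combining this with the per-band bound and the Littlewood--Paley square function estimate $\sum_{N'}\|P_{N'}f\|^2_{L^2}\lesssim\|f\|^2_{L^2}$ delivers the claimed uniform bound. The expected main obstacle is exactly this summation step: although the pieces $P^\pm P_{N'}f$ are orthogonal on all of $\mathbb{R}^d$, restricting to $\{|x|\geq N^{-1}/100\}$ destroys exact orthogonality, and for $d\geq 4$ the individual functions $P^\pm P_{N'}f$ may even fail to lie in $L^2(\mathbb{R}^d)$ because of a non-integrable singularity at the origin. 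The cleanest circumvention is a duality argument: bound the adjoint operator $P_{\geq N}P^\mp: L^2(\{|x|\geq N^{-1}/100\})\to L^2(\mathbb{R}^d)$, where the outer Littlewood--Paley projection $P_{\geq N}$ restores honest $L^2$ control and Schur's test applies directly to the transposed kernel.
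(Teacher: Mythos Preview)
The paper does not supply its own proof of this lemma; it is quoted from \cite{KVZ}. So there is nothing to compare against, and the question is simply whether your argument stands on its own.

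Part (i) is fine. For part (ii), the per-band Schur estimate for $P^\pm_{N'}$ on $\{|x|\ge (N')^{-1}/100\}$ is correct. The problem is the summation step, and your proposed fixes both fail.

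First, the claim that $P^\pm$ commutes with the Littlewood--Paley projection $P_{N'}$ is false. The operator $P^\pm$ is \emph{not} a Fourier multiplier: it takes $\hat f(k)$ and integrates against $H^{(1,2)}_{(d-2)/2}(kr)$ rather than $J_{(d-2)/2}(kr)$, so $P^\pm P_{N'}f$ does not have Fourier support in $\{|\xi|\sim N'\}$. (Concretely, in $d=3$ take $\hat f$ concentrated at a single frequency $k_0$; then $P^+f(r)=c\,r^{-1}e^{ik_0 r}$, whose Fourier transform is spread over all frequencies.) Hence the orthogonality you invoke is unavailable.

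Second, the duality argument does not rescue the summation. It is true that in the adjoint $(P^\pm P_{\ge N})^*=\sum_{N'\ge N}P_{N'}(P^\pm)^*$ the outputs are frequency-localized and therefore almost orthogonal in $L^2(\mathbb{R}^d)$; this gives $\|\sum_{N'}(P^\pm_{N'})^*h\|_{L^2}^2\sim\sum_{N'}\|(P^\pm_{N'})^*h\|_{L^2}^2$. But Schur on each transposed kernel still only yields $\|(P^\pm_{N'})^*h\|_{L^2}\lesssim\|h\|_{L^2}$ uniformly in $N'$, and the right-hand side $\sum_{N'\ge N}\|h\|_{L^2}^2$ diverges. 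You would need a per-band bound that is square-summable in $N'$, and nothing in the Schur argument provides that. Attempting Schur on the summed kernel also fails: summing the pointwise bounds over $N'\ge N$ produces a kernel behaving like $(|x||y|)^{-(d-1)/2}\big||x|-|y|\big|^{-1}$, whose Schur integrals are logarithmically divergent at $|x|=|y|$.

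The argument that actually works avoids the dyadic decomposition entirely. For $|x|\ge N^{-1}/100$ and $k\ge cN$ one has $k|x|\gtrsim 1$, so the large-argument asymptotic $H^{(1,2)}_\nu(z)=(2/\pi z)^{1/2}e^{\pm i(z-c_\nu)}\bigl(1+O(z^{-1})\bigr)$ applies uniformly. Substituting into the definition of $P^\pm P_{\ge N}f$ reduces the main term to a one-dimensional Fourier integral, whose $L^2(r^{d-1}dr)$ norm is controlled by $\|\widehat{P_{\ge N}f}\|_{L^2(k^{d-1}dk)}=\|P_{\ge N}f\|_{L^2}$ via Plancherel in one variable; the $O(z^{-1})$ remainder gives an operator with an honestly Schur-bounded kernel. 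This is presumably what \cite{KVZ} does, and it sidesteps the summation issue altogether.
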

\section{Stability and concentration compactness}
\begin{lemma}[Stability] For every $A>0$ and $\varepsilon>0$, there
exists $\delta>0$ with the following property: if $u: I\times\Bbb
R^d\rightarrow\Bbb C$ approximately solves (\ref{har}) in the sense
that
\begin{equation*}iu_t+\Delta
u-F(u)=e,
\end{equation*}
with
\begin{equation*}\|e\|_{L_t^2L_x^\frac{2d}{d+2}(I\times\Bbb R^d)}\leq\delta
\end{equation*}
and obeys
\begin{equation*}\|u\|_{L_t^6L_x^\frac{6d}{3d-2}(I\times\Bbb R^d)}\leq A,
\end{equation*}
and $t_0\in I$ and $v_0\in L_x^2(\Bbb R^d)$ are such that
\begin{equation*}\|u(t_0)-v_0\|_{L_x^2}\leq\delta,
\end{equation*}
then there exists a solution $v: I\times\Bbb R^d\rightarrow\Bbb C$
to (\ref{har}) with $v(t_0)=v_0$ such that
\begin{equation*}\|u-v\|_{L_t^6L_x^\frac{6d}{3d-2}(I\times\Bbb R^d)}\leq\varepsilon.
\end{equation*}
In particular, by the Strichartz inequality,
\begin{equation*}\|u-v\|_{L_t^\infty L_x^2(I\times\Bbb R^d)}\lesssim
\delta+\varepsilon A^2.
\end{equation*}
\end{lemma}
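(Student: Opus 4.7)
The plan is to set $w := v-u$, so that $w$ satisfies $iw_t+\Delta w = F(v)-F(u)-e$ with $\|w(t_0)\|_{L^2_x}\le\delta$, and to run a standard bootstrap--on--subintervals argument. I would partition $I$ into $J=J(A,\eta)$ consecutive subintervals $I_j=[t_j,t_{j+1}]$ on which
$$\|u\|_{L^6_t L^{6d/(3d-2)}_x(I_j\times\mathbb{R}^d)}\le\eta,$$
for a small absolute constant $\eta$ to be fixed later, and bound $w$ on each $I_j$ in the critical norm $S(I_j):=L^6_tL^{6d/(3d-2)}_x(I_j\times\mathbb{R}^d)$ by applying Strichartz (Lemma \ref{stri}) to the Duhamel formula for $w$.

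The crucial nonlinear input is the trilinear estimate
$$\bigl\|\bigl(|x|^{-2}*(\phi\bar\psi)\bigr)\chi\bigr\|_{L^2_t L^{2d/(d+2)}_x} \lesssim \|\phi\|_{S}\|\psi\|_{S}\|\chi\|_{S},$$
which I would derive from Hölder in time combined with Hardy--Littlewood--Sobolev in space: for $\phi,\psi\in L^{6d/(3d-2)}_x$ one has $\phi\bar\psi\in L^{3d/(3d-2)}_x$, and since $3d/(3d-2)<d/(d-2)$ for every $d\ge 3$, the Riesz potential $|x|^{-2}*(\cdot)$ (of order $d-2$) carries this into $L^{3d/4}_x$; pairing with $\chi\in L^{6d/(3d-2)}_x$ then lands exactly in $L^{2d/(d+2)}_x$, the dual Strichartz exponent. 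Writing $|v|^2-|u|^2=w\bar v+u\bar w$ and substituting $v=u+w$, this yields
$$\bigl\|F(v)-F(u)\bigr\|_{L^2_tL^{2d/(d+2)}_x(I_j\times\mathbb{R}^d)} \lesssim \bigl(\|u\|_{S(I_j)}+\|w\|_{S(I_j)}\bigr)^{2}\|w\|_{S(I_j)}.$$

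Inserting this estimate into Strichartz on $I_j$, together with the smallness of $\|u\|_{S(I_j)}\le\eta$, gives
$$\|w\|_{S(I_j)}+\|w\|_{L^\infty_tL^2_x(I_j)} \le C\|w(t_j)\|_{L^2_x} + C\bigl(\eta+\|w\|_{S(I_j)}\bigr)^{2}\|w\|_{S(I_j)} + C\delta.$$
Fixing $\eta$ so small that $C\eta^2\le 1/2$, a standard continuity argument closes a bootstrap whenever $\|w(t_j)\|_{L^2_x}+\delta$ is small, producing
$$\|w\|_{S(I_j)}+\|w(t_{j+1})\|_{L^2_x}\le C'\bigl(\|w(t_j)\|_{L^2_x}+\delta\bigr).$$
Iterating over the $J$ subintervals yields bounds of the form $(C')^{j}\delta$, so choosing $\delta\le\delta(A,\varepsilon)$ small enough (depending only on $J$, hence on $A$ and $\varepsilon$) forces $\|u-v\|_{S(I)}\le\varepsilon$; a last application of Strichartz on the full interval, using the trilinear bound globally to produce the factor $A^2$, delivers the asserted $L^\infty_tL^2_x$ estimate $\delta+\varepsilon A^2$.

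The only genuinely nontrivial step is the trilinear estimate: in contrast to the pure power nonlinearity of the mass-critical NLS, the non-local Riesz kernel $|x|^{-2}$ must be treated by Hardy--Littlewood--Sobolev, and one must verify that the resulting exponents land exactly at the dual Strichartz exponent $2d/(d+2)$. This works precisely because $3d/(3d-2)<d/(d-2)$ for all $d\ge 3$; once this identity is in hand, the remainder of the argument is the standard subinterval-bootstrap perturbation template familiar from \cite{KTV,KVZ}.
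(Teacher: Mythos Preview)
Your proof is correct and follows essentially the same strategy as the paper: both arguments rest on the trilinear estimate obtained via Hardy--Littlewood--Sobolev (landing the dual Strichartz exponent $2d/(d+2)$ exactly as you check), a continuity/bootstrap argument, and an induction over subintervals on which $u$ has small $L^6_tL^{6d/(3d-2)}_x$ norm. The paper phrases the last step slightly differently---first proving the lemma for $A$ sufficiently small and then remarking that the large-$A$ case follows by subdividing the interval and iterating---but this is precisely your partition-and-bootstrap scheme written in two stages.
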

\begin{proof}We first establish this claim when $A$ is sufficiently
small depending on $d$. Let $v: I'\times\Bbb R^d\rightarrow\Bbb C$
be the maximal-lifespan solution with initial data $v(t_0)$. Writing
$v=u+w$ on the interval $I'':=I\cap I'$, then $w$ satisfies
\begin{equation*}iw_t+\Delta w=F(u+w)-F(u)-e
\end{equation*}
with
$$\big\|e^{i(t-t_0)\Delta}w(t_0)\big\|_{L_t^6L_x^\frac{6d}{3d-2}(I''\times\Bbb
R^d)}\leq C_d'\delta.$$ Let
$X:=\|w\|_{L_t^6L_x^\frac{6d}{3d-2}(I''\times\Bbb R^d)}$, then by
Lemma \ref{stri}, we have
\begin{align*}X\leq& C_d'\delta+C_d''\Big(\|F(u+w)-F(u)\|_{L_t^2L_x^\frac{2d}{d+2}(I''\times\Bbb
R^d)}+\delta\Big)\\
\leq &\tilde{C}_d(A^2X+AX^2+X^3+\delta)
\end{align*}
where $\tilde{C}_d$ depends only on $d$. If $A$ is sufficiently
small depending on $d$ and $\delta$ is sufficiently small depending
on $\varepsilon$ and $d$, then the standard continuity argument
gives $X\leq\varepsilon$. If $A$ is large, we can iterate the case
when $A$ is small (shrinking $\delta$, $\varepsilon$ repeatedly)
after a subdivision of the time interval.
\end{proof}

We now need a key concentration-compactness result. The
concentration compactness  principle was first introduced by F.
Merle, L. Vega \cite{mv} and Bahouri, P. Gerard \cite{BG} to study
nonlinear Schr\"odinger equations. The idea was further developed by
S. Keraani \cite{Ker2}. The results of \cite{mv} and \cite{Ker2}
were extended to higher dimensions by P. Begout and A. Vargas
\cite{BV}. Because the solution of the free Schr\"{o}dinger equation
is still a solution under the action of linear propagator
$e^{it_0\Delta}$, we will need to enlarge the group $G$ to contain
this linear propagator.

\begin{definition}[Enlarged group] For any phase $\theta\in\Bbb R/2\pi\Bbb
Z$, position $x_0\in\Bbb R^d$, frequency $\xi_0\in\Bbb R^d$, scaling
parameter $\lambda>0$, and time $t_0$, we define the unitary
transformation $g_{\theta, x_0, \xi_0, \lambda, t_0}: L_x^2(\Bbb
R^d)\rightarrow L_x^2(\Bbb R^d)$ by the formula
\begin{equation*}g_{\theta, x_0, \xi_0, \lambda, t_0}=g_{\theta, x_0, \xi_0,
\lambda}e^{it_0\Delta}.
\end{equation*}
Let $G'$ be the collection of such transformations. In particularly,
we denote by $G_{rad}'$ the collection of all the transformation
with $x_0=\xi_0=0$. We also let $G'$ act on global spacetime
functions $u: \Bbb R\times\Bbb R^d\rightarrow\Bbb C$ by defining
\begin{equation*}T_{g_{\theta, \xi_0, x_0, \lambda,
t_0}}u(t,x):=\frac{1}{\lambda^\frac{d}{2}}e^{i\theta}e^{ix\cdot\xi_0}e^{-it|\xi_0|^2}
(e^{it_0\Delta}u)\Big(\frac{t}{\lambda^2},\frac{x-x_0-2\xi_0t}{\lambda}\Big).
\end{equation*}
\end{definition}
\begin{definition}For any two sequences $g_n$, $g_n'$ in $G'$, we say that $g_n$
and $g_n'$ are {\it asymptotically orthogonal} if $(g_n)^{-1}g_n'$
diverges to infinity in $G'$. More explicitly, if $g_n=g_{\theta_n,
\xi_n, x_n, \lambda_n, t_n}$ and $g_n'=g_{\theta_n', \xi_n', x_n',
\lambda_n', t_n'}$, then this asymptotic orthogonality is equivalent
to
\begin{equation*}
\lim_{n\rightarrow\infty}\bigg(\frac{\lambda_n}{\lambda_n'}+\frac{\lambda_n'}{\lambda_n}
+|t_n\lambda_n^2-t_n'(\lambda_n')^2|+|\xi_n-\xi_n'|+|x_n-x_n'|\bigg)=+\infty.
\end{equation*}
\end{definition}
Careful computation shows that if $g_n$ and $g_n'$ are
asymptotically orthogonal, then
\begin{equation*}\lim_{n\rightarrow\infty}\langle g_nf,g_n'f'
\rangle_{L_x^2(\Bbb R^d)}=0\ \ \text{for all}\ f, f'\in L_x^2(\Bbb
R^d).
\end{equation*}
\begin{theorem}[Linear profiles, \cite{BV}]
Fix $d$. Let $u_n$, $n=1,2,\cdots$ be a bounded sequence in
$L_{rad}^2(\Bbb R^d)$. Then (after passing to a subsequence if
necessary) there exists a family $\phi^{(j)}$, $j=1,2,\cdots$ of
functions in $L_{rad}^2(\Bbb R^d)$ and group elements $g_n^{(j)}\in
G'_{rad}$ for $j,n=1,2,\cdots$ such that we have the decomposition
\begin{equation}\label{equ14}
u_n=\sum_{j=1}^lg_n^{(j)}\phi^{(j)}+w_n^{(l)}
\end{equation}
for all $l=1,2,\cdots$; here $w_n^l\in L_{rad}^2(\Bbb R^d)$ is such
that its linear evolution has asymptotically vanishing scattering
size:
\begin{equation}\label{equ15}
\lim_{l\rightarrow\infty}\limsup_{n\rightarrow\infty}\|e^{it\Delta}w_n^l\|_{L_t^6L_x^\frac{6d}{3d-2}}=0.
\end{equation}
Moreover, $g_n^{(j)}$ and $g_n^{(j')}\in G_{rad}'$ are
asymptotically orthogonal for any $j\neq j'$, and for any $l\geq1$
we have the mass decoupling property
\begin{equation}\label{equ16}
\lim_{n\rightarrow\infty}\big[M(u_n)-\sum_{j=1}^lM(\phi^{(j)})-M(w_n^l)\big]=0.
\end{equation}
\end{theorem}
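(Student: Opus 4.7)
The plan is to run the iterative Bahouri--G\'erard/Keraani profile extraction adapted to the radial $L^2$ setting, in which the enlarged symmetry group $G'_{rad}$ contains only phases, scalings, and time translations. The whole argument rests on two ingredients: (a) an \emph{inverse Strichartz} inequality that converts nontrivial scattering size into a weak profile, and (b) a mass Pythagoras/decoupling property that makes the iteration terminate. Ingredient (a) is where the radial hypothesis enters crucially, via Shao's estimate (Lemma \ref{sha}), replacing the bilinear restriction argument of Begout--Vargas used in the non-radial setting.

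The first step I would prove is the following inverse Strichartz inequality: if $\{v_n\}\subset L^2_{rad}(\mathbb{R}^d)$ is bounded and satisfies $\limsup_n\|e^{it\Delta}v_n\|_{L^6_tL^{6d/(3d-2)}_x}\geq\varepsilon>0$, then, after passing to a subsequence, there exist parameters $(\theta_n,\lambda_n,t_n)\in\mathbb{R}\times(0,\infty)\times\mathbb{R}$ and $\phi\in L^2_{rad}$ with $\|\phi\|_{L^2}\gtrsim\varepsilon^{\alpha}$ such that $g_{\theta_n,0,0,\lambda_n,t_n}^{-1}v_n\rightharpoonup\phi$ weakly in $L^2$. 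To obtain this I would Littlewood--Paley decompose $e^{it\Delta}v_n$ and apply H\"older in time to reduce to a single dyadic block $P_{N_n}e^{it\Delta}v_n$; Shao's estimate then gives an $L^{q_0}_{t,x}$ bound for some $q_0>(4d+2)/(2d-1)$, and interpolation with the trivial dispersive control produces an $L^\infty_{t,x}$ concentration of size $\gtrsim\varepsilon^{\alpha}N_n^{d/2}$ at some $(t_n,x_n)$. Radial symmetry is decisive here: a radial function with $|P_{N_n}e^{it_n\Delta}v_n(x_n)|\gtrsim\varepsilon^{\alpha}N_n^{d/2}$ has this lower bound on the whole sphere $\{|x|=|x_n|\}$, so its $L^2$ mass on an $N_n^{-1}$-neighborhood of that sphere is $\gtrsim(|x_n|N_n)^{d-1}\varepsilon^{2\alpha}$. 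The uniform $L^2$ bound then forces $|x_n|\lesssim N_n^{-1}$, so I may take $x_n=0$; the rescaled sequence with $\lambda_n:=N_n^{-1}$ then has a subsequence weakly converging to the desired $\phi$.

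The decomposition (\ref{equ14}) is built iteratively: start with $w_n^{(0)}:=u_n$, and at stage $l$ apply the inverse Strichartz to $w_n^{(l-1)}$ to extract $(\phi^{(l)},g_n^{(l)})$, setting $w_n^{(l)}:=w_n^{(l-1)}-g_n^{(l)}\phi^{(l)}$. The weak convergence $(g_n^{(l)})^{-1}w_n^{(l-1)}\rightharpoonup\phi^{(l)}$ yields the one-step mass identity $M(w_n^{(l-1)})=M(\phi^{(l)})+M(w_n^{(l)})+o(1)$, which iterates to (\ref{equ16}). Since each extracted profile carries mass at least a fixed power of the current scattering size and the total mass is bounded, the scattering size of the remainder must tend to zero, proving (\ref{equ15}) after a diagonal subsequence. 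Asymptotic orthogonality of $g_n^{(j)}$ and $g_n^{(j')}$ for $j<j'$ is then forced by $(g_n^{(j)})^{-1}w_n^{(j)}\rightharpoonup 0$: a convergent subsequence of $(g_n^{(j)})^{-1}g_n^{(j')}$ in $G'_{rad}$ would make $\phi^{(j')}$ a weak limit inside the tail $w_n^{(j)}$ and hence zero, a contradiction.

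The main obstacle will be the inverse Strichartz step in the \emph{asymmetric} norm $L^6_tL^{6d/(3d-2)}_x$: this is not a pure spacetime $L^{q_0}_{t,x}$ norm, so the square-function/Shao chain cannot be invoked verbatim, and I will have to interpose H\"older in time with exponents carefully chosen to land inside the Shao regime $q>(4d+2)/(2d-1)$. The subsequent bookkeeping (mass Pythagoras, diagonal extraction, asymptotic orthogonality) is standard once this step is in place; the radial reduction of the concentration point to the origin is the payoff that makes the whole decomposition workable without tracking a spatial translation parameter, consistent with the authors' remark that radiality is essential in the Hartree setting.
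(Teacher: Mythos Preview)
The paper does not prove this theorem at all: it is stated with the citation \cite{BV} and immediately followed by ``For later use, we prove the following lemma,'' so the authors are simply quoting the Begout--Vargas profile decomposition as a black box (specialized to $G'_{rad}$). There is therefore no proof in the paper to compare your proposal against.

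That said, your sketch is a legitimate route to the result, and it differs from the cited source in one notable respect. Begout--Vargas work in the general (non-radial) $L^2$ setting and obtain the inverse Strichartz step via Tao's bilinear restriction estimate, which is what produces the full group $G'$ with spatial and frequency translations. You instead exploit radiality from the outset, using Shao's estimate (Lemma~\ref{sha}) to get the concentration and then the radial-mass argument to pin the concentration point to the origin; this directly yields profiles in $G'_{rad}$ without having to pass through the general decomposition and then argue separately that $x_n^{(j)},\xi_n^{(j)}$ may be taken to be zero. Your approach is more self-contained for the radial problem at hand and dovetails with the tools already assembled in the paper, at the cost of not recovering the non-radial statement. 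The potential sticking point you flag---handling the asymmetric $L^6_tL^{6d/(3d-2)}_x$ norm in the inverse step---is real but manageable: since $(6,\tfrac{6d}{3d-2})$ is admissible, one can interpolate between $L^\infty_tL^2_x$ and a symmetric $L^q_{t,x}$ norm in Shao's range to transfer nontrivial scattering size into $L^\infty_{t,x}$ concentration at a single dyadic scale, after which your argument proceeds as written.
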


For later use, we prove the following lemma:
\begin{lemma}\label{lem32}Let $g_n^{(j)}$, $g_n^{(j')}\in G_{rad}'$ be
asymptotically orthogonal for any $j\neq j'$, then we
 have
 \begin{equation}\Big\|\sum_{j=1}^lg_n^{(j)}\phi^{(j)}\Big\|_{L_t^6L_x^\frac{6d}{3d-2}}^6\leq
 \sum_{j=1}^l\Big\|g_n^{(j)}\phi^{(j)}\Big\|_{L_t^6L_x^\frac{6d}{3d-2}}^6+o(1),\quad\text{as}\quad n\rightarrow\infty.\end{equation}
 \end{lemma}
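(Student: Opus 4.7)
The plan is to reduce the claim to a scalar statement on $L^6_t$, and then exploit that inside $G_{rad}'$ the only nontrivial parameters are phase, scaling $\lambda$, and time shift $t_0$, so that the residual orthogonality between two group elements is encoded by a single pair of time-attached parameters $(\mu,\tau)$. Write $u_j(s,y):=(e^{is\Delta}\phi^{(j)})(y)$ and $b_j(s):=\|u_j(s,\cdot)\|_{L^p_y}$, where $p:=\tfrac{6d}{3d-2}$. A direct computation from the definition of $G_{rad}'$ gives
\[
    (g_n^{(j)}\phi^{(j)})(t,x) = (\lambda_n^{(j)})^{-d/2}e^{i\theta_n^{(j)}}\,u_j\!\Big(\tfrac{t}{(\lambda_n^{(j)})^2}+t_n^{(j)},\,\tfrac{x}{\lambda_n^{(j)}}\Big),
\]
so $a_j(t):=\|(g_n^{(j)}\phi^{(j)})(t,\cdot)\|_{L^p_x}=(\lambda_n^{(j)})^{-1/3}\,b_j\bigl(t/(\lambda_n^{(j)})^2+t_n^{(j)}\bigr)$, and the scale invariance of the admissible pair $(6,p)$ gives $\|a_j\|_{L^6_t}=\|b_j\|_{L^6_s}=\|g_n^{(j)}\phi^{(j)}\|_{L^6_tL^p_x}$. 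The triangle inequality in $L^p_x$ then yields
\[
    \Big\|\sum_{j=1}^l g_n^{(j)}\phi^{(j)}\Big\|_{L^6_tL^p_x}^6 \leq \int\Big(\sum_j a_j(t)\Big)^{6} dt = \sum_{j=1}^l \|a_j\|_{L^6_t}^6 + \text{(cross terms)},
\]
where the cross terms are products $a_{j_1}^{n_1}\cdots a_{j_m}^{n_m}$ with $\sum n_i=6$, $m\geq 2$, and not all indices equal.

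By Young's inequality every such multi-index cross term can be dominated by a sum of pairwise cross terms of the form $\int a_j^\alpha a_k^\beta\,dt$ with $j\ne k$, $\alpha+\beta=6$ and $\alpha,\beta>0$. For such a pairwise term, the change of variable $s=t/(\lambda_n^{(j)})^2+t_n^{(j)}$ produces, with $\mu:=\lambda_n^{(j)}/\lambda_n^{(k)}$ and $\tau:=t_n^{(k)}-\mu^2 t_n^{(j)}$, the identity
\[
    \int a_j^\alpha a_k^\beta\,dt = \mu^{2-\alpha/3}\int b_j(s)^\alpha\,b_k(\mu^2 s+\tau)^\beta\,ds,
\]
the powers of $\lambda_n^{(k)}$ cancelling because $\alpha+\beta=6$. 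Since $b_j\in L^6_s$ by Strichartz, I would approximate each $b_j$ in $L^6_s$ by a nonnegative compactly supported bump $\tilde b_j$; the resulting additive perturbation of the cross-term integral is then $O(\varepsilon)$ by Hölder's inequality.

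Passing to a subsequence, exactly one of three cases occurs for each pair $(j,k)$: (i) if $\mu\to 0$ the prefactor $\mu^{2-\alpha/3}$ tends to $0$ (since $\alpha<6$) while the integral is bounded by $\|\tilde b_j\|_{L^6}^\alpha\|\tilde b_k\|_{L^6}^\beta$ via Hölder; (ii) if $\mu\to\infty$ a symmetric substitution $r=\mu^2 s+\tau$ rewrites the prefactor as $\mu^{-\alpha/3}$ and the same bound applies; (iii) if $\mu$ stays bounded away from $0$ and $\infty$, the asymptotic orthogonality assumption (which for $G_{rad}'$ only sees the scaling and time parameters) forces $|\tau|\to\infty$, so that for $n$ large the compactly supported bumps $\tilde b_j(s)$ and $\tilde b_k(\mu^2 s+\tau)$ have disjoint support and the integral vanishes identically. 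Summing over the finitely many cross terms and letting the bump approximation error $\varepsilon\to 0$ after $n\to\infty$ gives the stated $o(1)$ bound.

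The main obstacle is case (iii): the non-symmetric norm $L^6_tL^p_x$ does not decouple automatically (as the $L^6_tL^3_x$ computation in the introduction illustrates), and it is only the absence of spatial translations and frequency modulations in $G_{rad}'$ that allows the residual orthogonality to be read off from a single pair of parameters $(\mu,\tau)$ living on the time axis, which is what makes the compact-support disjointness argument work.
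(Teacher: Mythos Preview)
Your approach is correct and arguably cleaner than the paper's. Both proofs rest on the same key observation (which you state explicitly at the end): inside $G_{rad}'$ the only non-compact parameters are scale and time shift, so orthogonality can be read off on the time axis alone. The paper, however, works directly with the integrand $\bigl|\sum_j g_n^{(j)}\phi^{(j)}\bigr|^{6d/(3d-2)}$: it splits off a factor $|\cdot|^2$, uses $4/(3d-2)<1$ for subadditivity of the remaining fractional power, kills the resulting spatial cross-terms, and then expands the outer $L_t$-power $(\cdot)^{(3d-2)/d}$ and handles the temporal cross-terms via the change of variables you also use. Your device of applying the triangle inequality in $L^p_x$ \emph{first} collapses everything to a scalar $L^6_t$ decoupling for the nonnegative functions $a_j(t)$, which sidesteps all the mixed space-time bookkeeping; the price is the somewhat wasteful initial triangle inequality, but since the statement is only an inequality this costs nothing.

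One genuine slip to fix in case~(i): H\"older in $L^6$ gives
\[
\int \tilde b_j(s)^\alpha\,\tilde b_k(\mu^2 s+\tau)^\beta\,ds \le \|\tilde b_j\|_{L^6}^\alpha\cdot\mu^{-\beta/3}\|\tilde b_k\|_{L^6}^\beta,
\]
and the factor $\mu^{-\beta/3}$ exactly cancels your prefactor $\mu^{2-\alpha/3}=\mu^{\beta/3}$, so this route only yields boundedness, not decay. What does work is to exploit the compact support and boundedness of the bumps: the integral is dominated by $\|\tilde b_j\|_\infty^\alpha\|\tilde b_k\|_\infty^\beta\cdot|\operatorname{supp}\tilde b_j|$, a constant independent of $(\mu,\tau)$, after which $\mu^{2-\alpha/3}\to 0$ finishes. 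Case~(ii) needs the symmetric fix. Your approximation step itself is sound: the error incurred in replacing $b_j,b_k$ by $\tilde b_j,\tilde b_k$ in the full cross term (prefactor included) is $O(\varepsilon)$ uniformly in $n$, precisely because the $\mu$-powers cancel in the H\"older estimate.
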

\begin{proof} Since $d\geq3$, $\frac{4}{3d-2}<1$. So we have
 \begin{align*}
\Big\|\sum_{j=1}^lg_n^{(j)}\phi^{(j)}\Big\|_{L_t^6L_x^\frac{6d}{3d-2}}^6
=&\int\bigg(\int\Big|\sum_{j=1}^lg_n^{(j)}\phi^{(j)}\Big|^2\Big|\sum_{p=1}^lg_n^{(p)}\phi^{(p)}\Big|^\frac{4}{3d-2}dx\bigg)^{\frac{3d-2}{d}}dt\\
=&\int\bigg(\sum_{j=1}^l\int\Big|g_n^{(j)}\phi^{(j)}\Big|^2\Big|\sum_{p=1}^lg_n^{(p)}\phi^{(p)}\Big|^\frac{4}{3d-2}dx\\
&\quad\quad\quad+\sum_{j=1}^l\sum_{k=1\atop k\neq
j}^l\int\Big|g_n^{(j)}\phi^{(j)}g_n^{(k)}\phi^{(k)}\Big|\sum_{p=1}^lg_n^{(p)}\phi^{(p)}\Big|^\frac{4}{3d-2}dx\bigg)^\frac{3d-2}{d}dt\\
\leq&\int\bigg(\sum_{j=1}^l\int\Big|g_n^{(j)}\phi^{(j)}\Big|^\frac{6d}{3d-2}dx+\sum_{j=1}^l\sum_{p=1\atop
p\neq
j}^l\int\Big|g_n^{(j)}\phi^{(j)}\Big|^2\Big|g_n^{(p)}\phi^{(p)}\Big|^\frac{4}{3d-2}dx\\
&\quad\quad\quad+\sum_{j=1}^l\sum_{k=1\atop k\neq
j}^l\int\Big|g_n^{(j)}\phi^{(j)}g_n^{(k)}\phi^{(k)}\Big|\Big|\sum_{p=1}^lg_n^{(p)}\phi^{(p)}\Big|^\frac{4}{3d-2}dx\bigg)^\frac{3d-2}{d}dt\\
:=&\int(A+B+C)^\frac{3d-2}{d}dt.
 \end{align*}
 Without loss of generality, we can assume that all $\phi^{(j)}$ are compactly supported in both $t$ and $x$. By orthogonality, B and C vanish as $n\rightarrow\infty$.
 Thus
 \begin{align*}
&\Big\|\sum_{j=1}^lg_n^{(j)}\phi^{(j)}\Big\|_{L_t^6L_x^\frac{6d}{3d-2}}^6
\leq\int\bigg(\sum_{j=1}^l\int\Big|g_n^{(j)}\phi^{(j)}\Big|^{\frac{6d}{3d-2}}dx\bigg)^\frac{3d-2}{d}dt+o(1).\\
 \end{align*}
Now we consider
 \begin{align*}
&\int\bigg(\sum_{j=1}^l\int\Big|g_n^{(j)}\phi^{(j)}\Big|^{\frac{6d}{3d-2}}dx\bigg)^\frac{3d-2}{d}dt\\
=&\int\sum_{j=1}^l\Big(\int\Big|g_n^{(j)}\phi^{(j)}\Big|^\frac{6d}{3d-2}dx\Big)^2
\Big(\sum_{p=1}^l\int\Big|g_n^{(p)}\phi^{(p)}\Big|^\frac{6d}{3d-2}dx\Big)^{\frac{d-2}{d}}dt\\
&\quad+\sum_{j=1}^l\sum_{k=1\atop k\neq
j}^l\int\Big(\int\Big|g_n^{(j)}\phi^{(j)}\Big|^\frac{6d}{3d-2}dx\Big)\Big(\int\Big|g_n^{(k)}\phi^{(k)}\Big|^\frac{6d}{3d-2}dx\Big)
\Big(\sum_{p=1}^l\int\Big|g_n^{(p)}\phi^{(p)}\Big|^\frac{6d}{3d-2}dx\Big)^\frac{d-2}{d}dt\\
:=&I+II.
 \end{align*}
 We estimate $I$ first.
 \begin{align}
 I\leq\sum_{j=1}^l&\int\bigg(\int\Big|g_n^{(j)}\phi^{(j)}\Big|^\frac{6d}{3d-2}dx\bigg)^\frac{3d-2}{d}dt\nonumber\\
 &\quad+\sum_{j=1}^l\sum_{p=1\atop p\neq
j}^l\int\Big(\int\Big|g_n^{(j)}\phi^{(j)}\Big|^\frac{6d}{3d-2}dx\Big)^2\Big(\int\Big|g_n^{(p)}\phi^{(p)}\Big|^\frac{6d}{3d-2}dx\Big)^\frac{d-2}{d}dt.
 \end{align}
 Note that it does not change the compact support of time to take
 space norm, and
 \begin{align*}
\int\Big|g_n^{(j)}\phi^{(j)}\Big|^\frac{6d}{3d-2}dx=&\frac{1}{(\rho_n^j)^\frac{2d}{3d-2}}\int\Big|\phi^{(j)}\bigg(\frac{t}{(\rho_n^j)^2}-t_n^j,x\bigg)\Big|^\frac{6d}{3d-2}dx\\
:=&\frac{1}{(\rho_n^j)^\frac{2d}{3d-2}}L^j\bigg(\frac{t}{(\rho_n^j)^2}-t_n^j\bigg),
 \end{align*}
 we have
 \begin{align*}
 &\int\Big(\int\Big|g_n^{(j)}\phi^{(j)}\Big|^\frac{6d}{3d-2}dx\Big)^2\Big(\int\Big|g_n^{(k)}\phi^{(k)}\Big|^\frac{6d}{3d-2}dx\Big)^\frac{d-2}{d}dt\\
=&\Big(\frac{\rho_n^j}{\rho_n^k}\Big)^\frac{2d-4}{3d-2}\int|L^j(\tilde{t})|^2\Big|
L^k\Big(\Big(\frac{\rho_n^j}{\rho_n^k}\Big)^2\tilde{t}+\Big(\frac{\rho_n^j}{\rho_n^k}\Big)^2t_n^j-t_n^k\Big)\Big|^{\frac{d-2}{d}}d\tilde{t}\\
 \rightarrow& 0\quad\text{as}\quad n\rightarrow\infty.
 \end{align*}
 We can prove similarly that $II\rightarrow 0$ when n is sufficiently large.
\end{proof}

 \section{Almost periodic solutions}
 For brevity, we write $S_I(u)$ to denote
$\|u\|_{L_t^6L_x^\frac{6d}{3d-2}(I\times\Bbb R^d)}$ in this section.
If $I=\Bbb R$, we write $S_\Bbb R(u)=S(u)$.
 \begin{proposition}\label{pro1}Fix $\mu$ and $d$, and suppose that $m_0$ is
finite. Let $u_n: I_n\times\Bbb R^d\rightarrow\Bbb C$ for
$n=1,2,\cdots$ be a sequence of radial solutions and $t_n\in I_n$ a
sequence of times such that
$$\limsup_{n\rightarrow\infty}M(u_n)=m_0,$$ and
$$\lim_{n\rightarrow\infty}S_{\geq t_n}(u_n)=\lim_{n\rightarrow\infty}S_{\leq t_n}(u_n)=\infty.$$
Then the sequence $G_{rad}u_n(t_n)$ has a subsequence which
converges in $G_{rad}\backslash L_x^2$.
\end{proposition}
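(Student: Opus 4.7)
The plan is to run the standard concentration-compactness strategy of \cite{KeM06s,KTV,KVZ}: apply the linear profile decomposition to $u_n(t_n)$ and exploit the minimality of $m_0$ together with the stability lemma to force exactly one nontrivial profile, of mass $m_0$, which then yields compactness in $G_{rad}\backslash L^2_x$.

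After a time translation we may assume $t_n=0$. The sequence $\{u_n(0)\}$ is bounded in $L^2_{rad}(\mathbb{R}^d)$, so Theorem (Linear profiles of \cite{BV}) produces, along a subsequence, profiles $\phi^{(j)}\in L^2_{rad}$ and asymptotically orthogonal $g_n^{(j)}=g_{\theta_n^{(j)},0,0,\lambda_n^{(j)},t_n^{(j)}}\in G'_{rad}$ with
$$u_n(0)=\sum_{j=1}^l g_n^{(j)}\phi^{(j)}+w_n^{(l)},$$
satisfying (\ref{equ15}) and (\ref{equ16}). Passing to a further subsequence, for each $j$ we may assume $t_n^{(j)}\to t_*^{(j)}\in[-\infty,+\infty]$. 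To each profile attach a nonlinear profile $v^{(j)}$: when $t_*^{(j)}$ is finite, solve (\ref{har}) with initial data $e^{it_*^{(j)}\Delta}\phi^{(j)}$ at time $0$; when $t_*^{(j)}=\pm\infty$, take $v^{(j)}$ to be the global solution that scatters to $\phi^{(j)}$ in $L^2$ as $t\to\mp\infty$ (which exists as soon as $M(\phi^{(j)})<m_0$, by the very definition of $m_0$ and the scattering part of Local well-posedness).

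Suppose first that there are at least two nontrivial profiles, or a single profile with $M(\phi^{(1)})<m_0$. Then $M(\phi^{(j)})<m_0$ for every $j$ by (\ref{equ16}), so every $v^{(j)}$ is global with finite scattering norm $S_{\mathbb{R}}(v^{(j)})$. I form the approximant
$$\tilde u_n^{(l)}(t):=\sum_{j=1}^l T_{g_n^{(j)}}v^{(j)}(t)+e^{it\Delta}w_n^{(l)}$$
and verify two things: (i) Lemma \ref{lem32} and the asymptotic orthogonality of $\{g_n^{(j)}\}$ give $\limsup_n\|\tilde u_n^{(l)}\|_{L_t^6L_x^{6d/(3d-2)}}\lesssim m_0^{1/2}$ uniformly in $l$; (ii) $\tilde u_n^{(l)}$ solves (\ref{har}) up to a forcing term small in $L_t^2L_x^{2d/(d+2)}$ as $n\to\infty$ then $l\to\infty$. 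The stability Lemma 3.1 then upgrades this to $S_{\mathbb{R}}(u_n)<\infty$ for large $n$, contradicting $\lim_n S_{\geq 0}(u_n)=\infty$. The hard step is (ii): the Hartree nonlinearity expands into diagonal pieces $F(T_{g_n^{(j)}}v^{(j)})$, mixed trilinear products with distinct profile indices, and terms involving $e^{it\Delta}w_n^{(l)}$; the diagonal pieces recombine into $\sum_j T_{g_n^{(j)}}F(v^{(j)})$, the $w$-pieces vanish using (\ref{equ15}) and H\"older, and the mixed products must be shown to vanish in the dual Strichartz norm. This is precisely the orthogonality-in-the-non-symmetric-norm difficulty flagged in the introduction: one cannot split $L_t^6L_x^{6d/(3d-2)}$ pointwise in $x$, and the $|x|^{-2}$ convolution forbids a direct bilinear-estimate shortcut. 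I therefore handle these cross terms by combining Lemma \ref{lem32}, the time/scale asymptotic orthogonality of the group parameters, and a density reduction to $\phi^{(j)}$ with compactly supported free evolutions, exactly as in the model computation leading to Lemma \ref{lem32}.

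Granting the above, we conclude $l=1$, $M(\phi^{(1)})=m_0$, and $\|w_n^{(1)}\|_{L^2}\to 0$. It remains to preclude $|t_n^{(1)}|\to\infty$. If $t_n^{(1)}\to+\infty$, then by the unitarity of the spatial parts of $g_n^{(1)}$,
$$\big\|e^{it\Delta}g_n^{(1)}\phi^{(1)}\big\|_{L_t^6L_x^{6d/(3d-2)}([0,\infty)\times\mathbb{R}^d)}=\big\|e^{it\Delta}\phi^{(1)}\big\|_{L_t^6L_x^{6d/(3d-2)}([t_n^{(1)},\infty)\times\mathbb{R}^d)}\to 0,$$
so the forward scattering norm of the nonlinear profile $T_{g_n^{(1)}}v^{(1)}$ on $[0,\infty)$ tends to zero; stability then forces $S_{\geq 0}(u_n)\to 0$, contradicting the blow-up hypothesis. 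The case $t_n^{(1)}\to-\infty$ is ruled out by the symmetric argument using $S_{\leq 0}(u_n)\to\infty$. Hence $t_n^{(1)}$ is bounded; along a subsequence $t_n^{(1)}\to t^*\in\mathbb{R}$, and absorbing $e^{it^*\Delta}$ into the profile yields $u_n(0)=\tilde g_n\tilde\phi+o_{L^2}(1)$ with $\tilde g_n:=g_{\theta_n^{(1)},0,0,\lambda_n^{(1)}}\in G_{rad}$ and $\tilde\phi:=e^{it^*\Delta}\phi^{(1)}$. This is precisely convergence of $G_{rad}u_n(0)$ in $G_{rad}\backslash L^2_x$.
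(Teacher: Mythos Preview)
Your argument is essentially identical to the paper's: the same profile decomposition, the same construction of nonlinear profiles and approximate solution, the same use of Lemma~\ref{lem32} to decouple the scattering norms, the same handling of the error terms in the dual Strichartz norm via time/scale orthogonality, and the same endgame ruling out $t_n^{(1)}\to\pm\infty$ by stability. One minor slip: when $t_*^{(j)}=\pm\infty$ the nonlinear profile should scatter to $e^{it\Delta}\phi^{(j)}$ as $t\to\pm\infty$ (not $\mp\infty$), since matching the approximant to $u_n(0)$ requires $v^{(j)}(t_n^{(j)})-e^{it_n^{(j)}\Delta}\phi^{(j)}\to 0$; this is harmless here because in the multi-profile scenario every $v^{(j)}$ has mass $<m_0$ and hence scatters in both directions anyway.
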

\begin{proof} By time translation invariance, we may take $t_n=0$ for all
$n$. Then we have $$\lim_{n\rightarrow\infty}S_{\geq
0}(u_n)=\lim_{n\rightarrow\infty}S_{\leq 0}(u_n)=\infty.$$ We
consider the sequence of $\{u_n(0)\}$. Since $\displaystyle
\limsup_{n\rightarrow\infty}M(u_n(0))= m_0$, we have by
concentration compactness principle that
\begin{equation}\label{equ6}u_n(0)=\sum_{j=1}^lg_n^{(j)}\varphi^{(j)}+w_n^l;\
\ \text{with} \ g_n^{(j)}=h_n^{(j)}e^{it_n^j\Delta}\ \text{and}\
h_n^{(j)}\in G_{rad}.
\end{equation}
Moreover, we have the asymptotic orthogonality:
\begin{equation}\label{equ7}\|u_n(0)\|_{L_2}^2=\sum_{j=1}^l\|\varphi^{(j)}\|_{L_2}^2+\|w_n^l\|_{L_2}^2+o(1),
\end{equation}
and \begin{equation}\label{equ9}
\limsup_{n\rightarrow\infty}S(e^{it\Delta}w_n^l)\rightarrow 0\ \
\text{as}\ \ l\rightarrow\infty.
\end{equation}

{\bf Claim}: For any $\varepsilon>0$, $\displaystyle\sup_j M(\varphi^{(j)})\leq m_0-\varepsilon$ doesn't hold .\\
Otherwise, there exists $\varepsilon_0>0$ such that
$M(\varphi^{(j)})\leq m_0-\varepsilon_0$ for any $j=1,2,\cdots$.
Suppose $v^{(j)}$ is the nonlinear profile associated to
$\varphi^{(j)}$ and depending on the limiting value of $t_n^{(j)}$,
namely,
\begin{enumerate}
\item[$\clubsuit$] If $t_n^j$ is identically zero, $v^{(j)}$ is the
maximal-lifespan solution to (\ref{har}) with initial data
$v^{(j)}(0)=\phi^{(j)}$.
\item[$\clubsuit$] If $t_n^j$ converges to $+\infty$, $v^{(j)}$ is the
maximal-lifespan solution to (\ref{har}) which scatters forward in
time to $e^{it\Delta}\phi^{(j)}$.
\item[$\clubsuit$] If $t_n^j$ converges to $-\infty$, $v^{(j)}$ is the
maximal-lifespan solution to (\ref{har}) which scatters backward in
time to $e^{it\Delta}\phi^{(j)}$.
\end{enumerate}

 Let
\begin{equation}\label{equ10}u_n^{(l)}(t)=\sum_{j=1}^lT_{h_n^{(j)}}\big[v^{(j)}(\cdot+t_n^j)\big](t)+e^{it\Delta}w_n^l,
\end{equation}
then we have
\begin{equation}\label{equ11}\lim_{l\rightarrow+\infty}\lim_{n\rightarrow\infty}S(u_n^{(l)})<+\infty.
\end{equation}
In fact, by Lemma \ref{lem32},
\begin{equation*}\lim_{l\rightarrow\infty}\lim_{n\rightarrow\infty}[S(u_n^{(l)})]^6\leq
\lim_{l\rightarrow\infty}\sum_{j=1}^l[S(v^{(j)})]^6.
\end{equation*}
Meanwhile, by (\ref{equ16}), for any $\epsilon>0$ sufficiently
small, there exists $j_0$ such that
$$\|\varphi^{(j)}\|_{L_2}^2<\epsilon, \ \ \text{for all}\  j>j_0.$$
Note that $T_{h_n^j}$ preserves $L_t^6L_x^\frac{6d}{3d-2}$ norm, by
the small data theory, we conclude that for any $j>j_0$, the maximal
lifespan $I^{(j)}=\Bbb R$ and
$$[S(v^{(j)})]^6\leq
CM(\varphi^{(j)})^3.$$ It follows that
$$\sum_{j>j_0}[S(v^{(j)})]^6\leq \sum_{j>j_0}M(\varphi^{(j)})^3\leq C,$$
where we use the mass decoupling property. For $j\leq j_0$, by the
definition of $m_0$ and the fact that $\|v^{(j)}(0)\|_{L^2}\leq
m_0-\varepsilon_0$, we conclude that the maximal lifespan
$I^{(j)}=\Bbb R$ and $S(v^{(j)})\leq C$ . Therefore, we have
\begin{equation*}
\lim_{l\rightarrow+\infty}\lim_{n\rightarrow\infty}S(u_n^{(l)})<\infty.
\end{equation*}
Meanwhile, by mass decoupling and the fact that $h_n^{(j)}$
preserves mass, we get
\begin{align*}
\lim_{n\rightarrow\infty}M(u_n^{(l)}(0)-u_n(0))=&\lim_{n\rightarrow\infty}M(\sum_{j=1}^l(T_{h_n^{(j)}}[v^{(j)}(\cdot+t_n^j)](0)-g_n^{(j)}\varphi^{(j)}))\\
\leq&\lim_{n\rightarrow\infty}\sum_{j=1}^l M(T_{h_n^{(j)}}[v^{(j)}(\cdot+t_n^j)](0)-g_n^{(j)}\varphi^{(j)})\\
=&\lim_{n\rightarrow\infty}\sum_{j=1}^lM(h_n^{(j)}[v^{(j)}(t_n^j)]-h_n^{(j)}e^{it_n^j\Delta}\varphi^{(j)})\\
=&\lim_{n\rightarrow\infty}\sum_{j=1}^lM(v^{(j)}(t_n^j)-e^{it_n^j\Delta}\varphi^{(j)})=0,
\end{align*}
where the last inequality follows from the definition of nonlinear
profile.

Finally, we claim that
\begin{equation}\label{equ13}\lim_{l\rightarrow+\infty}\limsup_{n\rightarrow\infty}
\Big\|(i\partial_t+\Delta)u_n^{(l)}-F(u_n^{(l)})\Big\|_{L_t^2L_x^\frac{2d}{d+2}}=0.
\end{equation}
In fact, write
\begin{equation*}v_n^{(j)}:=T_{h_n^{(j)}}[v^{(j)}(\cdot+t_n^j)].
\end{equation*}
By the definition of $u_n^{(l)}$, we have
$$u_n^{(l)}=\sum_{j=1}^lv_n^{(j)}+e^{it\Delta}w_n^l$$
and $$(i\partial_t+\Delta)u_n^{(l)}=\sum_{j=1}^lF(v_n^{(j)}).$$
Moreover,
\begin{align*}&\Big\|(i\partial_t+\Delta)u_n^{(l)}-F(u_n^{(l)})\Big\|_{L_t^2L_x^\frac{2d}{d+2}}\\
\leq&\Big\|F(u_n^{(l)}-e^{it\Delta}w_n^{(l)})-F(u_n^{(l)})\Big\|_{L_t^2L_x^\frac{2d}{d+2}}+\Big\|\sum_{j=1}^lF(v_n^{(j)})-F(\sum_{j=1}^lv_n^{(j)})\Big\|_{L_t^2L_x^\frac{2d}{d+2}}.
\end{align*}
So it suffices to prove \begin{equation}\label{equ17}
\lim_{l\rightarrow+\infty}\limsup_{n\rightarrow\infty}\Big\|F(u_n^{(l)}-e^{it\Delta}w_n^{(l)})-F(u_n^{(l)})\Big\|_{L_t^2L_x^\frac{2d}{d+2}}=0
\end{equation} and
\begin{equation}\label{equ18}
\lim_{n\rightarrow\infty}\Big\|\sum_{j=1}^lF(v_n^{(j)})-F(\sum_{j=1}^lv_n^{(j)})\Big\|_{L_t^2L_x^\frac{2d}{d+2}}=0.
\end{equation}
By Hardy-Littlewood-Sobolev inequality, we get
\begin{align*}
&\Big\|F(u_n^{(l)}-e^{it\Delta}w_n^{(l)})-F(u_n^{(l)})\Big\|_{L_t^2L_x^\frac{2d}{d+2}}\\
\lesssim&\|e^{it\Delta}w_n^{(l)}\|_{L_t^6L_x^\frac{6d}{3d-2}}\|u_n^{(l)}\|_{L_t^6L_x^\frac{6d}{3d-2}}^2
+\|e^{it\Delta}w_n^{(l)}\|_{L_t^6L_x^\frac{6d}{3d-2}}^2\|u_n^{(l)}\|_{L_t^6L_x^\frac{6d}{3d-2}}
+\|e^{it\Delta}w_n^{(l)}\|_{L_t^6L_x^\frac{6d}{3d-2}}^3.
\end{align*}
Thus (\ref{equ17}) follows from (\ref{equ11}) and (\ref{equ9}). For
(\ref{equ18}), by Minkowski inequality, we have
\begin{align*}&\Big\|\sum_{j=1}^lF(v_n^{(j)})-F(\sum_{j=1}^lv_n^{(j)})\Big\|_{L_t^2L_x^\frac{2d}{d+2}}\\
\lesssim& \sum_{j_1\neq
j_2}\Big\|\big(V*(v_n^{(j_1)}v_n^{(j_2)})\big)v_n^{(j_3)}\Big\|_{L_t^2L_x^\frac{2d}{d+2}}+\sum_{j_1\neq
j_2}\Big\|\big(V*|v_n^{(j_1)}|^2\big)v_n^{(j_2)}\Big\|_{L_t^2L_x^\frac{2d}{d+2}}.
\end{align*}
Since for any $j=1,2,\cdots, l$, $v_n^{(j)}$ is the radial solution
of (\ref{har}) with data $\varphi^{(j)}$,
$\|\varphi^{(j)}\|_{L^2}\leq m_0-\varepsilon$, it follows from mass
conservation and the definition of $m_0$ that
\begin{equation}\label{521}\|v_n^{(j)}\|_{L_t^6L_x^\frac{6d}{3d-2}}\leq C.
\end{equation} Therefore, this together with orthogonality yields
that
\begin{align*}
\Big\|\big(V*(v_n^{(j_1)}v_n^{(j_2)})\big)v_n^{(j_3)}\Big\|_{L_t^2L_x^\frac{2d}{d+2}}
\lesssim&\Big\|v_n^{(j_1)}v_n^{(j_2)}\Big\|_{L_t^3L_x^\frac{3d}{3d-2}}\big\|v_n^{(j_3)}\big\|_{L_t^6L_x^\frac{6d}{3d-2}}\\
\lesssim&\big\|v_n^{(j_1)}v_n^{(j_2)}\big\|_{L_t^3L_x^\frac{3d}{3d-2}}\rightarrow0,
\quad\quad \text{as}\quad n\rightarrow\infty.
\end{align*}
On the other hand, note that $v^{(j)}$ is radial, $h_n^{(j)}\in
G_{rad}$ and that the orthogonality must be possessed by time
variable, we have
\begin{equation*}\Big\|\big(V*|v_n^{(j_1)}|^2\big)v_n^{(j_2)}\Big\|_{L_t^2L_x^\frac{2d}{d+2}}\rightarrow0, \ \ \text{as}\ n\rightarrow\infty.
\end{equation*}
Thus, (\ref{equ18}) follows. At last, by stability, we conclude that
$\displaystyle\lim_{n\rightarrow\infty}S(u_n)<\infty$. This
contradicts the hypothesis.

From the above claim, it follows that $l=1$. So
$u_n(0)=h_ne^{it_n\Delta}\varphi+w_n$ with $M(\varphi)=m_0$. Thus
$M(w_n)\rightarrow0$, which implies that
$S(e^{it\Delta}w_n)\rightarrow0$ as $n\rightarrow\infty$. Without
loss of generality, we may take $h_n$ to be identity. If
$t_n\rightarrow0$, then $u_n(0)\rightarrow\varphi$. Thus
$G_{rad}u_n(0)\rightarrow G_{rad}\varphi$ in $G\backslash L_x^2$. It
suffices to consider the case of $t_n\rightarrow\pm \infty$. We only
consider the case of $t_n\rightarrow+\infty$, the other case is
similar. Then we have
\begin{align*}\lim_{n\rightarrow\infty}S_{\geq0}(e^{it\Delta}u_n(0))&=\lim_{n\rightarrow\infty}S_{\geq0}\big(e^{it\Delta}h_ne^{it_n\Delta
}\varphi\big)=\lim_{n\rightarrow\infty}S_{\geq0}\big(T_{h_n}e^{it\Delta}e^{it_n\Delta
}\varphi\big)\\
&=\lim_{n\rightarrow\infty}S_{\geq0}\big(e^{i(t+t_n)\Delta
}\varphi\big)=\lim_{n\rightarrow\infty}S_{\geq
t_n}(e^{it\Delta}\varphi).
\end{align*}
Since $S(e^{it\Delta}\varphi)<+\infty$, we have
$$\lim_{n\rightarrow\infty}S_{\geq
t_n}(e^{it\Delta}\varphi)=0.$$ By stability again, we have
$\displaystyle\lim_{n\rightarrow\infty}S_{\geq 0}(u_n)=0$ and we
reach a contradiction.
\end{proof}
 {\it Proof of Theorem \ref{3sc}.} We will only prove the first half of the theorem because the proof
of the second half is identical with that of \cite{KTV}, which
relies only the structure of group $G_{rad}$, pseudo-conformal
invariance of (\ref{har}) and is combinatorial.

Suppose Theorem \ref{main} failed, then there exists a sequence of
radial solutions $u_n$ of (\ref{har}) with $M(u_n)\leq m_0$ and
$\displaystyle\lim_{n\rightarrow\infty}S(u_n)=+\infty.$ Suppose
$u_n$ is maximal lifespan solutions, then there exists $t_n\in I_n$
such that $\displaystyle\lim_{n\rightarrow\infty}S_{\geq
t_n}(u_n)=\lim_{n\rightarrow\infty}S_{\leq t_n}(u_n)=\infty$. By
translation invariance, we may take $t_n$ to be zero. From
Proposition \ref{pro1}, it follows that there exists $u_0\in L^2$
such that $G_{rad}u_n(0)\rightarrow G_{rad}u_0$, namely,
$g_nu_n(0)\rightarrow u_0$ for some $g_n\in G_{rad}$. Without loss
of generality, we may assume that $g_n$ is identity. Thus
$u_n(0)\rightarrow u_0$. Moreover, $M(u_0)\leq m_0$.

Let $u$ be the radial solution of (\ref{har}) with initial data
$u_0$, then $u$ blows up both forward and backward in time. In fact,
if $u$ doesn't blow up forward in time, we have
$S_{\geq0}(u)<+\infty$. By stability, we have
$\displaystyle\limsup_{n\rightarrow+\infty}S_{\geq0}(u_n)<+\infty$.
This contradicts the asymptotically blow-up. Similarly, we can prove
that $u$ blows up backward in time. By the definition of $m_0$, we
have $m_0\leq M(u_0)$. Thus we have $M(u_0)=m_0$.

Now we consider any sequence $G_{rad}u(t_n')$ for $t_n'\in I_n$.
Since $u$ blows up forward and backward in time, we have $S_{\geq
t_n'}(u)=S_{\leq t_n'}(u)=\infty$. Then by Proposition \ref{pro1},
we have $G_{rad}u(t_n')\rightarrow G_{rad}u_0$ (up to subsequence).
Therefore, $\{G_{rad}u(t), t\in I\}$ is precompact in
$G_{rad}\backslash L^2_x$.
\begin{proposition}[Spacetime bound]\label{sb} Let $u$ be a non-zero solution to
(\ref{har}) with lifespan $I$, which is almost periodic modulo $G$
with frequency scale function $N: I\rightarrow\mathbb{R}^+$. If $J$
is any subinterval of $I$, then
\begin{equation}\label{spacetime}\int_J N(t)^2dt\lesssim_u
\int_J\Big(\int_{\mathbb{R}^d}|u(t,x)|^\frac{6d}{3d-2}dx\Big)^{\frac{3d-2}{d}}dt\lesssim_u
1+\int_J N(t)^2dt.
\end{equation}
\end{proposition}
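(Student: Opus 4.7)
The two inequalities in \eqref{spacetime} have quite different character. The lower bound $\int_J N(t)^{2}\,dt\lesssim_{u}\int_J\|u(t)\|_{L_{x}^{6d/(3d-2)}}^{6}\,dt$ is pointwise in $t$ and exploits the spatial concentration of $u(t)$ on a ball of radius $\sim N(t)^{-1}$. The upper bound is global, and I would extract it by partitioning $J$ into characteristic subintervals on which the $L_{t}^{6}L_{x}^{6d/(3d-2)}$-density is a fixed small constant, in the spirit of Killip--Tao--Visan.

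\textbf{Lower bound.} Since $u\not\equiv 0$ we have $M(u)>0$; apply Definition \ref{aps} with $\eta:=M(u)/2$ and write $C_{0}:=C(M(u)/2)$, so that
\begin{equation*}
\tfrac{1}{2}M(u)\;\le\;\int_{|x-x(t)|\le C_{0}/N(t)}|u(t,x)|^{2}\,dx\qquad\text{for every }t\in I.
\end{equation*}
H\"older's inequality on the ball $B=B(x(t),C_{0}/N(t))$ with the conjugate pair $\bigl(\tfrac{3d}{3d-2},\tfrac{3d}{2}\bigr)$ bounds the right-hand side by $|B|^{2/(3d)}\|u(t)\|_{L_{x}^{6d/(3d-2)}}^{2}\lesssim_{u}N(t)^{-2/3}\|u(t)\|_{L_{x}^{6d/(3d-2)}}^{2}$. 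Cubing and integrating in $t$ over $J$ produces the desired lower bound.

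\textbf{Upper bound.} Fix $\eta>0$ small, to be chosen below depending only on $u$, and subdivide $J$ into consecutive subintervals $J_{1},\dots,J_{M}$ with $\int_{J_{k}}\|u(t)\|_{L_{x}^{6d/(3d-2)}}^{6}\,dt=\eta$ for $k<M$ and a final remainder $J_{M}$ whose integral is $\le\eta$ (this is possible by continuity of the antiderivative). The core claim is that on each \emph{full} $J_{k}$ one has $\int_{J_{k}}N(t)^{2}\,dt\gtrsim_{u}1$. Granting this, summation yields $M-1\lesssim_{u}\int_{J}N^{2}\,dt$, hence
\begin{equation*}
\int_{J}\|u(t)\|_{L_{x}^{6d/(3d-2)}}^{6}\,dt\;=\;M\eta\;\lesssim_{u}\;1+\int_{J}N(t)^{2}\,dt.
\end{equation*}

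\textbf{The main obstacle.} To prove the core claim, fix a full $J_{k}$ and pick $t^{\ast}\in J_{k}$ with $N(t^{\ast})^{2}\le 2\inf_{J_{k}}N^{2}$. The Duhamel formula combined with Lemma \ref{stri} and Hardy--Littlewood--Sobolev applied to $F(u)=(|x|^{-2}\ast|u|^{2})u$ yields
\begin{equation*}
\eta^{1/6}\;=\;\|u\|_{L_{t}^{6}L_{x}^{6d/(3d-2)}(J_{k})}\;\le\;\bigl\|e^{i(t-t^{\ast})\Delta}u(t^{\ast})\bigr\|_{L_{t}^{6}L_{x}^{6d/(3d-2)}(J_{k})}+C\eta^{1/2},
\end{equation*}
so for sufficiently small $\eta$ the linear piece is $\gtrsim\eta^{1/6}$. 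Almost periodicity modulo $G$ produces a compact set $\mathcal{K}\subset L_{x}^{2}$ and a group element $g\in G$ at scale $N(t^{\ast})$ with $\phi:=g^{-1}u(t^{\ast})\in\mathcal{K}$. Exploiting the scale-invariance of the linear spacetime norm under the $G$-action, together with the dominated convergence theorem applied uniformly over $\mathcal{K}$, then gives
\begin{equation*}
\sup_{\phi\in\mathcal{K}}\;\bigl\|e^{is\Delta}\phi\bigr\|_{L_{s}^{6}L_{x}^{6d/(3d-2)}(|s|\le c)}\;\longrightarrow\;0\qquad\text{as }c\to 0^{+}.
\end{equation*}
Choosing $c$ below the resulting threshold forces $N(t^{\ast})^{2}|J_{k}|\gtrsim_{u}1$, whence $\int_{J_{k}}N(t)^{2}\,dt\ge|J_{k}|\inf_{J_{k}}N^{2}\ge\tfrac{1}{2}|J_{k}|N(t^{\ast})^{2}\gtrsim_{u}1$. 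The delicate step is the last one: making the dominated-convergence estimate uniform over the $G$-orbit and carefully tracking how the $G$-rescaling transforms the $L_{t}^{6}L_{x}^{6d/(3d-2)}$-norm.
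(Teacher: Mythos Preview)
Your lower bound is exactly the paper's argument.

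For the upper bound you take the \emph{dual} partition to the paper's. The paper slices $J$ into subintervals $I_j$ with $\int_{I_j}N(t)^{2}\,dt\sim\eta$ (there are at most $\eta^{-1}(1+\int_J N^{2})$ of them), picks $t_j\in I_j$ with $N(t_j)^{2}|I_j|\le 2\eta$, and then runs Strichartz plus a high--low frequency split of $u(t_j)$: the high piece $u_{>N_0}(t_j)$ is small in $L^2$ by the frequency tightness in Definition~\ref{aps} once $N_0$ is a large multiple of $N(t_j)$, and the low piece is handled by Bernstein, giving $\|e^{i(t-t_j)\Delta}u_{\le N_0}(t_j)\|_{L_t^6L_x^{6d/(3d-2)}(I_j)}\lesssim |I_j|^{1/6}N_0^{1/3}M(u)^{1/2}\lesssim (C^{2}\eta)^{1/6}M(u)^{1/2}$. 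A bootstrap then yields $\|u\|_{L_t^6L_x^{6d/(3d-2)}(I_j)}^{6}\lesssim\eta$, and summing over $j$ finishes.

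You instead slice by spacetime norm and show $\int_{J_k}N^{2}\gtrsim_u 1$ on each full piece via the compactness formulation in Remark~1.1. This is a legitimate alternative; the two arguments are mirror images of the same mechanism (the linear flow of $u(t^{\ast})$ is small on rescaled time intervals of length $\ll 1$). The paper's route is slightly more concrete because it avoids invoking the abstract compact set $\mathcal{K}$ and instead uses the quantitative frequency tail bound directly; your route is slightly cleaner because it sidesteps the bootstrap. The step you flag as ``delicate'' is in fact routine: the $L_t^6L_x^{6d/(3d-2)}$-norm of the linear flow is exactly invariant under the full $G$-action (Galilean boost, translation, phase, and scaling all preserve it), so one reduces to $\|e^{is\Delta}\phi\|_{L_s^6L_x^{6d/(3d-2)}(|s|\le c)}\to 0$ uniformly on the compact $\mathcal{K}$, which follows from Strichartz-continuity of $\phi\mapsto e^{is\Delta}\phi$ and a finite-cover (or Dini) argument.
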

\begin{proof} We first prove that
\begin{equation}\label{21}\int_J\Big(\int_{\mathbb{R}^d}|u(t,x)|^\frac{6d}{3d-2}dx\Big)^{\frac{3d-2}{d}}dt\lesssim_u
1+\int_J N(t)^2dt.\end{equation} Let $0<\eta<1$ to be chosen
momentarily and partition $J$ into subintervals $I_j$ so that
\begin{equation}\eta/2\leq\int_{I_j}N(t)^2dt\leq\eta,
\end{equation}
this requires at most $\eta^{-1}\times\text{RHS}(\ref{spacetime})$
intervals. For each $j$, we may choose $t_j\in I_j$ so that
\begin{equation}
N(t_j)^2|I_j|\leq2\eta.
\end{equation}
By Strichartz estimates, we have the following estimates on the
spacetime slab $I_j\times{\Bbb R}^d$
\begin{align*}
\|u\|_{L_t^6L_x^\frac{6d}{3d-2}}\lesssim&\Big\|e^{i(t-t_j)\Delta}u(t_j)\Big\|_{L_t^6L_x^\frac{6d}{3d-2}}+\|u\|_{L_t^6L_x^\frac{6d}{3d-2}}^3\\
\lesssim&\|u_{>N_0}(t_j)\|_{L_x^2}+\Big\|e^{i(t-t_j)\Delta}u_{<N_0}(t_j)\Big\|_{L_t^6L_x^\frac{6d}{3d-2}}+\|u\|_{L_t^6L_x^\frac{6d}{3d-2}}^3\\
\lesssim&\|u_{>N_0}(t_j)\|_{L_x^2}+|I_j|^\frac{1}{6}N_0^\frac{1}{3}\|u(t_j)\|_{L_x^2}+\|u\|_{L_t^6L_x^\frac{6d}{3d-2}}^3\\
\lesssim&\|u_{>N_0}(t_j)\|_{L_x^2}+\eta^\frac{1}{6}\|u(t_j)\|_{L_x^2}+\|u\|_{L_t^6L_x^\frac{6d}{3d-2}}^3.
\end{align*}
Choosing $N_0$ as a large multiple of $N(t_j)$ and using Definition
\ref{aps}, one can make the first term arbitrary small. Choosing
$\eta$ sufficiently small depending on $M(u)$, one may also render
the second term arbitrarily small. Thus by the bootstrap argument we
obtain$$\int_{I_j}\Big(\int_{\mathbb{R}^d}|u(t,x)|^\frac{6d}{3d-2}dx\Big)^{\frac{3d-2}{d}}dt\lesssim\eta.$$
So (\ref{21}) follows if we use the bound on the number of intervals
$I_j$.

Now we prove
\begin{equation}\label{23}\int_J\Big(\int_{\mathbb{R}^d}|u(t,x)|^\frac{6d}{3d-2}dx\Big)^{\frac{3d-2}{d}}dt\gtrsim_u
\int_J N(t)^2dt.\end{equation} Using Definition \ref{aps} and
choosing $\eta$ sufficiently small depending on $M(u)$, we can
guarantee that
\begin{equation}\label{22}\int_{|x-x(t)|\leq
C(\eta)N(t)^{-1}}|u(t,x)|^2dx\gtrsim_u1.\end{equation} By H\"older
inequality, we get
\begin{align*}\int_{{\Bbb
R}^d}|u(t,x)|^\frac{6d}{3d-2}dx \gtrsim\Big(\int_{|x-x(t)|\leq
C(\eta)N(t)^{-1}}|u(t,x)|^2dx\Big)^\frac{3d}{3d-2}N(t)^\frac{2d}{3d-2}.
\end{align*}
Using (\ref{22}) and integrating over $J$ we derive (\ref{23}).
\end{proof}
\section{The self-similar solutions}

 This section is devoted to proving Theorem {\ref{self-similar}}. Let $u$ be as in Theorem \ref{self-similar}. For any
 $A>0$, we define\begin{equation}\label{m}\mathcal
 {M}(A):=\sup_{T>0}\big\|u_{>AT^{-1/2}}(T)\big\|_{L_x^2(\mathbb{R}^d)},\qquad \qquad \qquad \end{equation}
 \begin{equation}\label{s}\mathcal
 {S}(A):=\sup_{T>0}\big\|u_{>AT^{-1/2}}(T)\big\|_{L_t^6L_x^\frac{6d}{3d-2}([T,2T]\times\mathbb{R}^d)},\end{equation}
 \begin{equation}\label{n}\mathcal
 {N}(A):=\sup_{T>0}\big\|P_{>AT^{-1/2}}F(u)\big\|_{L_{t}^2L_x^\frac{2d}{d+2}([T,2T]\times\mathbb{R}^d)},\end{equation}
 where $u_{>AT^{-1/2}}(T)=P_{>AT^{-1/2}}u(T)$. To prove Theorem \ref{self-similar}, it suffices to show that for
 every $s>0$, $$\mathcal{M}(A)\lesssim_{s,u}A^{-s}$$ whenever
 $A$ is sufficiently large depending on $u$ and $s$. From mass
 conservation, Proposition \ref{sb}, self-similarity and
 Hardy-Littlewood-Sobolev inequality, we have
 \begin{equation}\label{41}\mathcal{M}(A)+\mathcal{S}(A)+\mathcal{N}(A)\lesssim_u 1\end{equation}
 for all $A>0$. From Strichartz estimates, we also see that
 \begin{equation}\label{42}
\mathcal{S}(A)\lesssim\mathcal{M}(A)+\mathcal{N}(A)
 \end{equation}
 for all $A>0$. A similar application of Strichartz estimates shows that for
 any admissible pair $(q,r)$,
 \begin{equation}\label{43}\|u\|_{L_t^qL_x^r([T, 2T]\times\Bbb
 R^d)}\lesssim_u 1
\end{equation}
for all $T>0$.
\begin{lemma}[Nonlinear estimate]\label{NE} For all $A>100$, we have
$$\mathcal{N}(A)\lesssim_u \mathcal{S}(\frac{A}{8})\mathcal{M}
(\sqrt{A})+A^{-\frac{1}{2d+6}}[\mathcal{M}(\frac{A}{8})+\mathcal{N}(\frac{A}{8})].$$
\end{lemma}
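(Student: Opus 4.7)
The plan is to decompose $u$ into low and high frequency pieces at scale $(A/8)T^{-1/2}$, eliminate by Fourier support the all-low-frequency trilinear piece, and estimate the seven remaining pieces using H\"older, Hardy-Littlewood-Sobolev, the self-similar Strichartz bound (\ref{43}), and Shao's radial estimate (Lemma \ref{sha}). Fix $T>0$, set $N:=AT^{-1/2}$, write $V=|x|^{-2}$, and let $u_{lo}:=P_{\leq(A/8)T^{-1/2}}u$ and $u_{hi}:=P_{>(A/8)T^{-1/2}}u$. Expanding $F(u)=(V*|u|^2)u$ into eight trilinear pieces, the all-low piece $(V*|u_{lo}|^2)u_{lo}$ has Fourier support in $\{|\xi|\leq \tfrac{3A}{8}T^{-1/2}\}\subset\{|\xi|\leq N\}$ and is annihilated by $P_{>N}$. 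The remaining seven pieces each carry at least one $u_{hi}$ factor; I group them according to whether the outer factor is $u_{hi}$ (Class I) or $u_{lo}$ (Class II).

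For Class I, the outer $u_{hi}$ placed in $L_t^6L_x^{6d/(3d-2)}$ contributes $\mathcal{S}(A/8)$. Inside the convolution I further split $u=u_{\mathrm{low}}+u_{\mathrm{high}}$ at the coarser scale $\sqrt{A}T^{-1/2}$ (note $\sqrt{A}<A/8$ since $A>100$). Sub-pieces containing at least one $u_{\mathrm{high}}$ factor in the convolution yield the main term: placing $u_{\mathrm{high}}$ in $L_t^\infty L_x^2$ gives $\mathcal{M}(\sqrt{A})$, the remaining $u$ factor goes in an admissible Strichartz norm controlled by (\ref{43}), and H\"older together with HLS on the convolution produces the bound $\lesssim\mathcal{S}(A/8)\mathcal{M}(\sqrt{A})$. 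The residual sub-piece $(V*|u_{\mathrm{low}}|^2)u_{hi}$ (which carries no $u_{\mathrm{high}}$) is deferred to the Shao-based argument below, which handles it by the same mechanism as Class II.

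For Class II (and the residual Class I sub-piece), the outer $u_{lo}$ carries no useful cutoff, so the gain must come from the $u_{hi}$ inside the convolution. I write $u_{hi}=\sum_{M\geq(A/8)T^{-1/2}}P_Mu$ and apply Shao's radial Strichartz estimate to each frequency-localized piece. Combining the Duhamel representation (Corollary \ref{co1}) with Shao's estimate at an exponent $q>(4d+2)/(2d-1)$ chosen so that $d/2-(d+2)/q=-1/(2d+6)$ yields $\|P_Mu\|_{L_{t,x}^q([T,2T])}\lesssim M^{-1/(2d+6)}[\mathcal{M}(A/8)+\mathcal{N}(A/8)]$. Summing dyadically in $M\geq(A/8)T^{-1/2}$ produces the factor $A^{-1/(2d+6)}$; combining with H\"older, HLS on the convolution, and (\ref{43}) for the remaining $u_{lo}$ factors then yields $\lesssim A^{-1/(2d+6)}[\mathcal{M}(A/8)+\mathcal{N}(A/8)]$.

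The principal obstacle is that the bilinear Strichartz estimate (Lemma \ref{bi}) used in the NLS setting of \cite{KTV,KVZ} is not directly applicable: the convolution with $|x|^{-2}$ scrambles the frequency interaction between the low and high factors inside $|u|^2$. Shao's radial Strichartz estimate, together with its dual applied via Duhamel to connect $\|P_Mu\|_{L_{t,x}^q}$ back to $\mathcal{M}(A/8)+\mathcal{N}(A/8)$, is what restores the frequency-localized gain. The self-similar scaling ensures all estimates are uniform in $T$, so taking the supremum over $T>0$ completes the proof.
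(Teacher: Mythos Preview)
Your overall architecture is close to the paper's, but there is a genuine gap in the Class II treatment and a related miscount in the Shao step. First, Shao's estimate at $q=(2d+6)/(d+1)$ gives $\|P_Me^{it\Delta}f\|_{L_{t,x}^q}\lesssim M^{-1/(d+3)}\|f\|_{L^2}$, not $M^{-1/(2d+6)}$; summing dyadically over $M\geq (A/8)T^{-1/2}$ therefore produces $((A/8)T^{-1/2})^{-1/(d+3)}$, which carries a factor $T^{1/(2(d+3))}$ and is \emph{not} uniform in $T$. In the paper the missing half of the exponent, and the cancellation of the $T$-power, come from an additional Bernstein/Sobolev step on the very-low factors: one writes $\|V*|u_{<\sqrt{A}T^{-1/2}}|^2\|_{L_t^{q_1}L_x^{r_1}}\lesssim \||\nabla|^{1/(d+3)}(|u_{<\sqrt{A}T^{-1/2}}|^2)\|_{L_t^{q_1}L_x^{r_2}}\lesssim (\sqrt{A}T^{-1/2})^{1/(d+3)}$, and only the product $(\sqrt{A}T^{-1/2})^{1/(d+3)}\cdot(AT^{-1/2})^{-1/(d+3)}=A^{-1/(2d+6)}$ is uniform. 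Your appeal to ``H\"older, HLS, and (\ref{43})'' does not supply this Bernstein factor.

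More seriously, for the Class~II term $(V*(u_{hi}\bar u_{lo}))u_{lo}$ with $u_{lo}=P_{\leq (A/8)T^{-1/2}}u$, your Shao-only scheme cannot deliver any $A$-decay: if the two $u_{lo}$ factors are placed in admissible Strichartz norms (so that (\ref{43}) applies), H\"older and HLS force the $u_{hi}$ factor into $L_{t,x}^{2(d+2)/d}$, the symmetric admissible exponent, at which Shao's gain is $M^0$; if instead you take a smaller $q$, the $u_{lo}$ factors must absorb a Bernstein loss at scale $(A/8)T^{-1/2}$, which exactly cancels the Shao gain. The paper avoids this by further splitting the \emph{Class~II} low factors at $\sqrt{A}T^{-1/2}$ as well, pushing any medium contribution into the $\mathcal{S}(A/8)\mathcal{M}(\sqrt{A})$ bucket, and then treating the genuinely low--high cross term $(V*(u_{\geq (A/8)T^{-1/2}}\,u_{<\sqrt{A}T^{-1/2}}))u_{<\sqrt{A}T^{-1/2}}$ with the \emph{bilinear} Strichartz estimate (Lemma~\ref{bi}) applied to the pointwise product $u_{hi}u_{<\sqrt{A}T^{-1/2}}$, yielding $A^{-(d-1)/4}[\mathcal{M}(A/8)+\mathcal{N}(A/8)]$. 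Your statement that bilinear Strichartz ``is not directly applicable'' overgeneralizes: the obstruction occurs only for $(V*|u_{lo}|^2)u_{hi}$, where the convolution separates $u_{hi}$ from both low factors; when $u_{hi}$ and $u_{lo}$ sit together inside the convolution, bilinear Strichartz applies and is in fact the tool the paper uses.
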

\begin{proof} It suffices to prove that
\begin{equation}\label{44}
\Big\|P_{>AT^{-\frac{1}{2}}}\big(F(u)\big)\Big\|_{L_t^2L_x^\frac{2d}{d+2}}
\lesssim_u\mathcal{S}(\frac{A}{8})\mathcal{M}
(\sqrt{A})+A^{-\frac{1}{2d+6}}[\mathcal{M}(\frac{A}{8})+\mathcal{N}(\frac{A}{8})]
\end{equation}
for arbitrary $T>0$. To do this, we decompose $u$ as
$$u=u_{\geq \frac{1}{8}AT^{-\frac{1}{2}}}+u_{\frac{1}{8}AT^{-\frac{1}{2}}>\cdot\geq \sqrt{A}T^{-\frac{1}{2}}}+u_{<\sqrt{A}T^{-\frac{1}{2}}}.$$
Then any term in the resulting expansion of
$P_{>AT^{-\frac{1}{2}}}\big(F(u)\big)$ that does not contain the
factor of $u_{\geq \frac{1}{8}AT^{-\frac{1}{2}}}$ vanishes.

Consider the terms which contain at least one factor of
$u_{\frac{1}{8}AT^{-\frac{1}{2}}>\cdot\geq
\sqrt{A}T^{-\frac{1}{2}}}$. The term which contains three factors
of $u_{>\frac{1}{8}AT^{-\frac{1}{2}}}$ or which contains two
factors of $u_{>\frac{1}{8}AT^{-\frac{1}{2}}}$ and one factor of
$u_{<\sqrt{A}T^{-\frac{1}{2}}}$ can be estimated similarly. By
H\"older inequality, Hardy-Littlewood-Sobolev inequality,
(\ref{m}), (\ref{s}) and (\ref{43}), we have
\begin{align*}
\Big\|V*\big(u_{\geq
\frac{1}{8}AT^{-\frac{1}{2}}}&u_{\frac{1}{8}AT^{-\frac{1}{2}}>\cdot\geq
\sqrt{A}T^{-\frac{1}{2}}}\big)u\Big\|_{L_t^2L_x^\frac{2d}{d+2}([T,2T]\times{\Bbb R}^d)}\\
\lesssim&\Big\|V*\big(u_{\geq
\frac{1}{8}AT^{-\frac{1}{2}}}u_{\frac{1}{8}AT^{-\frac{1}{2}}>\cdot\geq
\sqrt{A}T^{-\frac{1}{2}}}\big)\Big\|_{L_t^6L_x^\frac{3d}{5}([T,2T]\times{\Bbb
R}^d)}\|u\|_{L_t^3L_x^\frac{6d}{3d-4}([T,2T]\times{\Bbb R}^d)}\\
\lesssim&\Big\|u_{\geq
\frac{1}{8}AT^{-\frac{1}{2}}}u_{\frac{1}{8}AT^{-\frac{1}{2}}>\cdot\geq
\sqrt{A}T^{-\frac{1}{2}}}\Big\|_{L_t^6L_x^\frac{3d}{3d-1}([T,2T]\times{\Bbb R}^d)}\\
\lesssim&\big\|u_{>\frac{1}{8}AT^{-\frac{1}{2}}}\big\|_{L_t^6L_x^\frac{6d}{3d-2}[T,2T]\times{\Bbb
R}^d)}\big\|u_{\frac{1}{8}AT^{-\frac{1}{2}}>\cdot\geq
\sqrt{A}T^{-\frac{1}{2}}}\big\|_{L_t^\infty
L_x^2([T,2T]\times{\Bbb R}^d)}\\
\lesssim&\mathcal{S}(\frac{A}{8})\mathcal{M}(\sqrt{A}),
\end{align*}
and
\begin{align*}
\Big\|V*\big(u_{\geq
\frac{1}{8}AT^{-\frac{1}{2}}}&u\big)u_{\frac{1}{8}AT^{-\frac{1}{2}}>\cdot\geq
\sqrt{A}T^{-\frac{1}{2}}}\Big\|_{L_t^2L_x^\frac{2d}{d+2}([T,2T]\times{\Bbb R}^d)}\\
\lesssim&\Big\|V*\big(u_{\geq
\frac{1}{8}AT^{-\frac{1}{2}}}u\big)\Big\|_{L_t^2L_x^d([T,2T]\times{\Bbb
R}^d)}\big\|u_{\frac{1}{8}AT^{-\frac{1}{2}}>\cdot\geq
\sqrt{A}T^{-\frac{1}{2}}}\big\|_{L_t^\infty
L_x^2([T,2T]\times{\Bbb R}^d)}\\
\lesssim&\Big\|u_{\geq
\frac{1}{8}AT^{-\frac{1}{2}}}u\Big\|_{L_t^2L_x^\frac{d}{d-1}([T,2T]\times{\Bbb R}^d)}\mathcal{M}(\sqrt{A})\\
\lesssim&\big \|u_{\geq
\frac{1}{8}AT^{-\frac{1}{2}}}\big\|_{L_t^6L_x^\frac{6d}{3d-2}([T,2T]\times{\Bbb
R}^d)}
\big\|u\big\|_{L_t^3L_x^\frac{6d}{3d-4}([T,2T]\times{\Bbb R}^d)}\mathcal{M}(\sqrt{A})\\
\lesssim&\mathcal{S}(\frac{A}{8})\mathcal{M}(\sqrt{A}).
\end{align*}
Similarly, we have
$$\Big\|V*\big(u_{\frac{1}{8}AT^{-\frac{1}{2}}>\cdot\geq
\sqrt{A}T^{-\frac{1}{2}}}u\big)u_{\geq
\frac{1}{8}AT^{-\frac{1}{2}}}\Big\|_{L_t^2L_x^\frac{2d}{d+2}([T,2T]\times{\Bbb
R}^d)}\lesssim \mathcal{S}(\frac{A}{8})\mathcal{M}(\sqrt{A}).$$

Finally, we consider the terms with one factor of $u_{\geq
\frac{1}{8}AT^{-\frac{1}{2}}}$ and two factors of
$u_{<\sqrt{A}T^{-\frac{1}{2}}}$. First, by H\"older inequality,
Lemma \ref{Bern}, Lemma \ref{bi}, (\ref{m}), (\ref{n}) and
(\ref{43}), we have
\begin{align*}
\Big\|V*\big(u_{\geq
\frac{1}{8}AT^{-\frac{1}{2}}}&u_{<\sqrt{A}T^{-\frac{1}{2}}}\big)u_{<\sqrt{A}T^{-\frac{1}{2}}}\Big\|_{L_t^2L_x^\frac{2d}{d+2}([T,2T]\times{\Bbb R}^d)}\\
\lesssim&\Big\|V*\big(u_{\geq
\frac{1}{8}AT^{-\frac{1}{2}}}u_{<\sqrt{A}T^{-\frac{1}{2}}}\big)\Big\|_{L_t^2L_x^d([T,2T]\times{\Bbb R}^d)}\big\|u_{<\sqrt{A}T^{-\frac{1}{2}}}\big\|_{L_t^\infty L_x^2([T,2T]\times{\Bbb R}^d)}\\
\lesssim&_u\frac{1}{(\frac{1}{8}AT^{-\frac{1}{2}})^\frac{d-2}{2}}\big\|u_{>\frac{1}{8}AT^{-\frac{1}{2}}}u_{<\sqrt{A}T^{-\frac{1}{2}}}\big\|_{L_{t,x}^2}\\
\lesssim&_uA^{-\frac{d-1}{4}}
\Big(\|P_{>\frac{1}{8}AT^{-\frac{1}{2}}}u(t_0)\|_{L_x^2}+\big\|(i\partial_t+\Delta)P_{>\frac{1}{8}AT^{-\frac{1}{2}}}u\big\|_{L_t^2L_x^\frac{2d}{d+2}}\Big)\\
&\quad\quad\times\Big(\|P_{<\sqrt{A}T^{-\frac{1}{2}}}u(t_0)\|_{L_x^2}+\big\|(i\partial_t+\Delta)P_{<\sqrt{A}T^{-\frac{1}{2}}}u\big\|_{L_t^2L_x^\frac{2d}{d+2}}\Big)
\\
\lesssim&_uA^{-\frac{d-1}{4}}\big(\mathcal{M}(\frac{A}{8})+\mathcal{N}(\frac{A}{8})\big).
\end{align*}
Now it suffices to estimate
\begin{align*}
\Big\|\big(V*|u_{<\sqrt{A}T^{-\frac{1}{2}}}|^2\big)u_{>\frac{1}{8}AT^{-\frac{1}{2}}}\Big\|_{L_t^2L_x^\frac{2d}{d+2}([T,2T]\times{\Bbb
R}^d)}.\end{align*}
We divide it into two terms. 
\begin{align*}
&\Big\|\Big(V*|u_{<\sqrt{A}T^{-\frac{1}{2}}}|^2\Big)u_{>\frac{1}{8}AT^{-\frac{1}{2}}}
\Big\|_{L_t^2L_x^\frac{2d}{d+2}([T, 2T]\times\Bbb R^d)}\\
\leq&\Big\|\Big(V*|u_{<\sqrt{A}T^{-\frac{1}{2}}}|^2\Big)P_{>\frac{1}{8}AT^{-\frac{1}{2}}}e^{i(t-T)\Delta}u(T)
\Big\|_{L_t^2L_x^\frac{2d}{d+2}([T, 2T]\times\Bbb
R^d)}\\
&\quad\quad+\Big\|\Big(V*|u_{<\sqrt{A}T^{-\frac{1}{2}}}|^2\Big)\int_T^tP_{>\frac{1}{8}AT^{-\frac{1}{2}}}e^{i(t-t')\Delta}
F(u(t'))dt'\Big\|_{L_t^2L_x^\frac{2d}{d+2}([T, 2T]\times\Bbb
R^d)}\\
:=&I+II.
\end{align*}
Let $$q=\frac{2d+6}{d+1},\ \
s=-\frac{d}{2}+\frac{d+2}{q}=\frac{1}{d+3}>0$$ and $q_1$, $r_1$,
$r_2$ satisfy
$$\frac{1}{q_1}+\frac{1}{q}=\frac{1}{2}, \ \ \frac{1}{r_1}+\frac{1}{q}=\frac{d+2}{2d}, \ \ \frac{1}{r_1}=\frac{1}{r_2}-\frac{d-2+s}{d},$$
which yields that
$$\frac{2}{2q_1}+\frac{d}{2r_2}=\frac{d}{2}.$$
Thus, By H\"older inequality, Sobolev imbedding, Lemma \ref{Bern},
Lemma \ref{sha} and (\ref{43}), we have \begin{align*}
I\leq&\big\|V*|u_{<\sqrt{A}T^{-\frac{1}{2}}}|^2\big\|_{L_t^{q_1}L_x^{r_1}([T,
2T]\times\Bbb
R^d)}\big\|P_{>\frac{1}{8}AT^{-\frac{1}{2}}}e^{i(t-T)\Delta}u(T)\big\|_{L_{t,x}^q([T,
2T]\times\Bbb R^d)}\\
\leq&
C\big\||\nabla|^s\big(|u_{<\sqrt{A}T^{-\frac{1}{2}}}|^2\big)\big\|_{L_t^{q_1}L_x^{r_2}([T,
2T]\times\Bbb
R^d)}\sum_{N>\frac{1}{8}AT^{-\frac{1}{2}}}\big\|P_Ne^{i(t-T)\Delta}u_{>\frac{1}{8}AT^{-\frac{1}{2}}}(T)\big\|_{L_{t,x}^q([T,
2T]\times\Bbb R^d)}\\
\lesssim&
(\sqrt{A}T^{-\frac{1}{2}})^s\|u\|_{L_t^{2q_1}L_x^{2r_2}([T,
2T]\times\Bbb
R^d)}^2\sum_{N>\frac{1}{8}AT^{-\frac{1}{2}}}N^{\frac{d}{2}-\frac{d+2}{q}}\|P_{>\frac{1}{8}AT^{-\frac{1}{2}}}u(T)\|_{L_x^2}\\
\lesssim&_u
(\sqrt{A}T^{-\frac{1}{2}})^s(AT^{-\frac{1}{2}})^{\frac{d}{2}-\frac{d+2}{q}}\mathcal{M}(\frac{A}{8})
\lesssim_u A^{-\frac{1}{2d+6}}\mathcal{M}(\frac{A}{8}).
\end{align*}
Now we estimate $II$. By duality and Lemma \ref{sha}, we get that
\begin{equation}\label{62}
\|\int_{\Bbb R}P_Ne^{-it\Delta}f(t,x)dt\|_{L_x^2}\leq
CN^{\frac{d}{2}-\frac{d+2}{q}}\|f\|_{L_{t,x}^{q'}}.
\end{equation}
Therefore, this together with Lemma \ref{stri} yields that for
$q>\frac{4d+2}{2d-1}$,
\begin{align*}
\Big\|\int_{\Bbb R}P_Ne^{i(t-t')\Delta}&F(t')dt'\Big\|_{L_{t,x}^q}\\
=&\sup_{\|g\|_{L_{t,x}^{q'}}=1}\int_{\Bbb R}\langle\int_{\Bbb
R}P_Ne^{i(t-t')\Delta}F(t')dt', g(t,x)\rangle dt\\
\leq& \sup_{\|g\|_{L_{t,x}^{q'}}=1}\Big\|\int_{\Bbb
R}e^{-it'\Delta}F(t',x)dt'\Big\|_{L_x^2}\Big\|\int_{\Bbb
R}P_Ne^{-it\Delta}g(t,x)dt\Big\|_{L_x^2}\\
\lesssim&
N^{\frac{d}{2}-\frac{d+2}{q}}\|F\|_{L_t^2L_x^\frac{2d}{d+2}}.
\end{align*}
From this and Christ-Kiselev lemma (see \cite{Tao}), we get that
$$\Big\|\int_{t_0}^tP_Ne^{i(t-t')\Delta}P_{>\frac{1}{8}AT^{-\frac{1}{2}}}F(u(t'))dt'\Big\|_{L_{t,x}^q}\leq
CN^{\frac{d}{2}-\frac{2d}{d+2}}\|P_{>\frac{1}{8}AT^{-\frac{1}{2}}}F\|_{L_t^2L_x^\frac{2d}{d+2}}.$$
Similar to the estimate of $I$, we get that
\begin{equation*}
II\lesssim_u A^{-\frac{1}{2d+6}}\mathcal{N}(\frac{A}{8}).
\end{equation*}
\end{proof}
\begin{lemma}[Qualitative decay]\label{qd} We have
\begin{equation}\label{equ43}\lim_{A\rightarrow\infty}\mathcal{M}(A)=
\lim_{A\rightarrow\infty}\mathcal{S}(A)=\lim_{A\rightarrow\infty}\mathcal{N}(A)=0.
\end{equation}
\end{lemma}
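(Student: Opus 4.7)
The plan is to handle $\mathcal M$ first, then bootstrap via the nonlinear estimate to get $\mathcal N \to 0$, and finally deduce $\mathcal S \to 0$ from the Strichartz-type relation (\ref{42}).

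\textbf{Step 1: $\mathcal M(A)\to 0$.} This is the only genuinely new input and follows directly from almost periodicity modulo $G_{rad}$. Since $u$ is self-similar we have $N(t)=t^{-1/2}$ and we may take $\xi(t)=0$, so the frequency-side localisation in Definition \ref{aps} reads
\begin{equation*}
\int_{|\xi|\geq C(\eta)T^{-1/2}}|\hat u(T,\xi)|^2\,d\xi\leq\eta
\qquad\text{for all }T>0,\ \eta>0.
\end{equation*}
Because the Littlewood--Paley projector $P_{>AT^{-1/2}}$ is (up to a universal constant) a smooth frequency cutoff to $|\xi|\gtrsim AT^{-1/2}$, choosing $A\geq C(\eta)$ (times the universal constant) yields $\|u_{>AT^{-1/2}}(T)\|_{L^2_x}\lesssim\eta^{1/2}$ uniformly in $T$. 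Taking the supremum and letting $\eta\to 0$ gives $\mathcal M(A)\to 0$.

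\textbf{Step 2: $\mathcal N(A)\to 0$.} I plug Step 1 into Lemma \ref{NE}. For any $A>100$,
\begin{equation*}
\mathcal N(A)\lesssim_u \mathcal S\!\left(\tfrac{A}{8}\right)\mathcal M(\sqrt A)+A^{-\frac{1}{2d+6}}\!\left[\mathcal M\!\left(\tfrac{A}{8}\right)+\mathcal N\!\left(\tfrac{A}{8}\right)\right].
\end{equation*}
The \emph{a priori} bound (\ref{41}) says $\mathcal M,\mathcal S,\mathcal N$ are all $O_u(1)$, so the second bracket is $O_u(1)$ and the prefactor $A^{-1/(2d+6)}$ forces that whole piece to vanish as $A\to\infty$. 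For the first piece, $\mathcal S(A/8)=O_u(1)$ while $\mathcal M(\sqrt A)\to 0$ by Step 1, so this term also vanishes. Hence $\mathcal N(A)\to 0$.

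\textbf{Step 3: $\mathcal S(A)\to 0$.} Combining (\ref{42}) with Steps 1 and 2,
\begin{equation*}
\mathcal S(A)\lesssim \mathcal M(A)+\mathcal N(A)\longrightarrow 0,\qquad A\to\infty,
\end{equation*}
which completes the proof.

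The only step that required real work is Step 1, and even that is essentially a rephrasing of almost periodicity once one notices that the self-similar scaling $N(t)=t^{-1/2}$ matches exactly the frequency threshold $AT^{-1/2}$ appearing in the definitions (\ref{m})--(\ref{n}). The main conceptual obstacle in this section is not the present lemma but rather Lemma \ref{NE} (already proved), which carefully decouples the Hartree nonlinearity into high/low pieces while paying for each loss with a smallness factor; once that nonlinear estimate is in place, the qualitative decay is a one-paragraph consequence.
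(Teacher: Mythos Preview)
Your proof is correct and follows essentially the same three-step argument as the paper: almost periodicity with $N(t)=t^{-1/2}$ gives $\mathcal M(A)\to 0$, then Lemma~\ref{NE} combined with the a priori bound (\ref{41}) gives $\mathcal N(A)\to 0$, and finally (\ref{42}) gives $\mathcal S(A)\to 0$. There is no meaningful difference between your argument and the paper's.
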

\begin{proof}Since $u$ is almost periodic modulo $G_{rad}$ and
self-similar, for any $\eta>0$, there exists $C(\eta)$, such that
$$\int_{|\xi|>C(\eta)T^{-\frac{1}{2}}}|\hat{u}(T,
\xi)|^2d\xi<\eta.$$ In particular, when $A$ is sufficiently large,
we have $$\int_{|\xi|>AT^{-\frac{1}{2}}}|\hat{u}(T,
\xi)|^2d\xi<\eta.$$ Therefore, we get
$$\lim_{A\rightarrow\infty}\mathcal{M}(A)=0.$$ From Lemma \ref{NE}
and (\ref{41}), we have
\begin{align*}\mathcal{N}(A)\leq&
\mathcal{S}(\frac{A}{8})\mathcal{M}(\sqrt{A})+A^{-\frac{1}{2d+6}}\big(\mathcal{M}(\frac{A}{8})+\mathcal{N}(\frac{A}{8})\big)\\
\lesssim&_u \mathcal{M}(\sqrt{A})+A^{-\frac{1}{2d+6}}\rightarrow 0\
\ \text{as}\ \ A\rightarrow\infty.\end{align*} Finally, (\ref{42})
yields
$$\mathcal{S}(A)\lesssim \mathcal{M}(A)+\mathcal{N}(A)\rightarrow 0\ \ \text{as}\ \ A\rightarrow\infty.$$
\end{proof}
\begin{proposition}\label{pro51}Let $0<\eta<1$. Then if $A$ is sufficiently
large depending on $u$ and $\eta$,
\begin{equation}\label{equ41}\mathcal{S}(A)\leq\eta\mathcal{S}(\frac{A}{16})+A^{-\frac{1}{2d+6}}.\end{equation}
In particular, $\mathcal{S}(A)\lesssim_u A^{-\frac{1}{2d+6}}$ for
all $A>0$.
\end{proposition}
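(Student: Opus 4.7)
The plan is to derive the recursive inequality $\mathcal{S}(A)\le \eta\mathcal{S}(A/16)+A^{-1/(2d+6)}$ by combining the linear bound~(\ref{42}) with Lemma~\ref{NE}, upgraded using the qualitative decay of Lemma~\ref{qd}; a Duhamel-type argument based on Corollary~\ref{co1} handles the $\mathcal{M}(A)$ piece. The ``in particular'' statement is then obtained by routine iteration and summing a geometric series.

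By~(\ref{42}), $\mathcal{S}(A)\lesssim \mathcal{M}(A)+\mathcal{N}(A)$, so everything reduces to bounding these two quantities. Lemma~\ref{NE} gives
\begin{equation*}
\mathcal{N}(A)\leq C\,\mathcal{S}(A/8)\,\mathcal{M}(\sqrt{A})+CA^{-1/(2d+6)}\bigl[\mathcal{M}(A/8)+\mathcal{N}(A/8)\bigr],
\end{equation*}
and the bracketed factor is controlled by a constant $C_u$ depending only on $u$ via~(\ref{41}). Lemma~\ref{qd} ensures $\mathcal{M}(\sqrt{A})\to 0$ as $A\to\infty$, so taking $A$ large enough (depending on $u$ and $\eta$) forces $C\mathcal{M}(\sqrt{A})\le \eta/4$. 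Combined with the monotonicity $\mathcal{S}(A/8)\le\mathcal{S}(A/16)$ (a larger frequency cutoff captures less of the solution, so $\mathcal{S}$ is decreasing in its argument), this yields $\mathcal{N}(A)\leq (\eta/4)\mathcal{S}(A/16)+C_uA^{-1/(2d+6)}$.

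To handle $\mathcal{M}(A)$, I would use the forward Duhamel formula of Corollary~\ref{co1}, $u(T)=i\lim_{T'\nearrow\infty}\int_T^{T'}e^{i(T-s)\Delta}F(u(s))\,ds$ (weak limit in $L^2_x$). Applying $P_{>AT^{-1/2}}$ and the dual endpoint Strichartz inequality gives
\begin{equation*}
\|P_{>AT^{-1/2}}u(T)\|_{L^2_x}\lesssim\|P_{>AT^{-1/2}}F(u)\|_{L^2_tL^{2d/(d+2)}_x([T,\infty))}.
\end{equation*}
Decomposing $[T,\infty)=\bigcup_{k\ge 0}[2^kT,2^{k+1}T]$ and using $L^2$-orthogonality in time, the right side is controlled by $\bigl(\sum_{k\ge 0}\mathcal{N}(A\cdot 2^{k/2})^2\bigr)^{1/2}$, since on the $k$-th piece the cutoff $AT^{-1/2}$ corresponds to the parameter $A\cdot 2^{k/2}$ relative to the rescaled time $2^kT$. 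Inserting Lemma~\ref{NE} into each $\mathcal{N}(A\cdot 2^{k/2})$ and exploiting monotonicity $\mathcal{S}(A\cdot 2^{k/2}/8)\le\mathcal{S}(A/16)$ and $\mathcal{M}(\sqrt{A\cdot 2^{k/2}})\le\mathcal{M}(\sqrt{A})$, together with Lemma~\ref{qd} to make these small uniformly in $k$, produces the analogous bound $\mathcal{M}(A)\le (\eta/4)\mathcal{S}(A/16)+C_uA^{-1/(2d+6)}$; the $A^{-1/(2d+6)}$ terms themselves sum geometrically.

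Combining these with~(\ref{42}) gives $\mathcal{S}(A)\leq \eta\mathcal{S}(A/16)+C_uA^{-1/(2d+6)}$ for $A$ sufficiently large; the constant $C_u$ can be absorbed either by further enlarging $A$ or shrinking $\eta$. For the ``in particular'' statement, iterating the recursion $K$ times yields
\begin{equation*}
\mathcal{S}(A)\leq \eta^K\mathcal{S}(A/16^K)+C_uA^{-1/(2d+6)}\sum_{k=0}^{K-1}\bigl(\eta\cdot 16^{1/(2d+6)}\bigr)^k.
\end{equation*}
Choosing $\eta<16^{-1/(2d+6)}$ makes the geometric series converge, and taking $K$ maximal subject to $A/16^K\ge A_0$ (the threshold beyond which the recursion is valid) reduces the first term to a constant via~(\ref{41}), giving $\mathcal{S}(A)\lesssim_u A^{-1/(2d+6)}$ for all $A>0$. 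The main obstacle is the Duhamel step controlling $\mathcal{M}(A)$: a naive $L^1$ Minkowski summation of the pieces produces cross-terms $\mathcal{S}\mathcal{M}$ that do not manifestly decay in $k$. The delicate point is to exploit the $L^2$-orthogonality in time together with the uniform-in-$k$ smallness $\mathcal{M}(\sqrt{A\cdot 2^{k/2}})\le\mathcal{M}(\sqrt{A})$ furnished by Lemma~\ref{qd}, converting qualitative decay into the quantitative bound needed to close the recursion.
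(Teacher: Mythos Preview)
Your approach has a genuine gap in the step controlling $\mathcal{M}(A)$. After the $\ell^2$-in-time decomposition you arrive at $\bigl(\sum_{k\ge0}\mathcal{N}(A\cdot 2^{k/2})^2\bigr)^{1/2}$ and then insert Lemma~\ref{NE}. The second term of Lemma~\ref{NE} carries the factor $(A\cdot 2^{k/2})^{-1/(2d+6)}$ and sums geometrically, as you note. But the first term, $\mathcal{S}\!\bigl(\tfrac{A\cdot 2^{k/2}}{8}\bigr)\mathcal{M}\!\bigl(\sqrt{A\cdot 2^{k/2}}\bigr)$, is only bounded by the $k$-independent quantity $\mathcal{S}(A/16)\mathcal{M}(\sqrt{A})$. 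Making this quantity small ``uniformly in $k$'' does not help: the $\ell^2$ sum of a nonzero constant over infinitely many $k$ is infinite. No qualitative decay from Lemma~\ref{qd} can close this sum, because at this stage you have no quantitative decay of either $\mathcal{S}$ or $\mathcal{M}$ in its argument --- that is precisely what you are trying to prove. You have in effect rediscovered why the paper postpones the Duhamel-based bound on $\mathcal{M}$ (your inequality is essentially~(\ref{equ52})) to Corollary~\ref{co}, \emph{after} $\mathcal{S}(A)\lesssim_u A^{-1/(2d+6)}$ is already in hand, so that each $\mathcal{N}(2^{k/2}A)$ inherits quantitative decay in $k$.

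The paper circumvents the circularity by never invoking $\mathcal{M}(A)$ directly. Instead it writes the Duhamel formula from the \emph{finite} time $T/2$, so the nonlinear piece is just $\mathcal{N}(A/2)$ (no infinite sum), and the remaining task is to bound the linear piece $\|P_{>AT^{-1/2}}e^{i(t-T/2)\Delta}u(T/2)\|_{L_t^6L_x^{6d/(3d-2)}([T,2T])}$ directly in the Strichartz norm. This is done by interpolating Shao's estimate (Lemma~\ref{sha}) against a dispersive bound obtained from the \emph{backward} Duhamel formula on $(0,T/2]$, exploiting self-similarity to send the initial time to $0$. The output is a clean power $A^{-2(d-1)(d-2)/(3(4d^2-5d-2))}$, which beats $A^{-1/(2d+6)}$. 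This direct spacetime estimate of the free evolution is the missing idea in your argument; once it replaces your $\mathcal{M}(A)$ step, the iteration you wrote for the ``in particular'' part goes through essentially as stated.
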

\begin{proof} Fix $\eta\in(0,1)$. It suffices to show that
\begin{equation}
\Big\|u_{>AT^{-\frac{1}{2}}}\Big\|_{L_t^6L_x^\frac{6d}{3d-2}[T,2T]\times{\Bbb
R}^d}\leq\eta\mathcal{S}\big(\frac{A}{16}\big)+A^{-\frac{1}{2d+6}}
\end{equation}
for all $T>0$.\\
\indent Fix $T>0$. By the Duhamel formula and then using Lemma
\ref{stri}, we obtain
\begin{align*}
\Big\|u_{>AT^{-\frac{1}{2}}}\Big\|_{L_t^6L_x^\frac{6d}{3d-2}[T,2T]\times{\Bbb
R}^d}\lesssim&\Big\|P_{>AT^{-\frac{1}{2}}}e^{i(t-\frac{T}{2})\Delta}u(\frac{T}{2})\Big\|_{L_t^6L_x^\frac{6d}{3d-2}[T,2T]\times{\Bbb
R}^d}\\
&+\Big\|P_{>AT^{-\frac{1}{2}}}F(u)\Big\|_{L_t^2L_x^\frac{2d}{d+2}([\frac{T}{2},2T]\times\Bbb
R^d)}.
\end{align*}
First, we consider the second term. By definition, we have
$$\big\|P_{>AT^{-\frac{1}{2}}}F(u)\big\|_{L_t^2L_x^\frac{2d}{d+2}([\frac{T}{2},2T]\times\Bbb
R^d)}\lesssim \mathcal{N}(A/2).$$ Using Lemma \ref{NE} and Lemma
\ref{qd}, we derive
that$$\big\|P_{>AT^{-\frac{1}{2}}}F(u)\big\|_{L_t^2L_x^\frac{2d}{d+2}([\frac{T}{2},2T]\times\Bbb
R^d)}\lesssim RHS(\ref{equ41}).$$ Thus the second term is
acceptable.

\begin{center}
\resizebox{5cm}{5cm}{\includegraphics{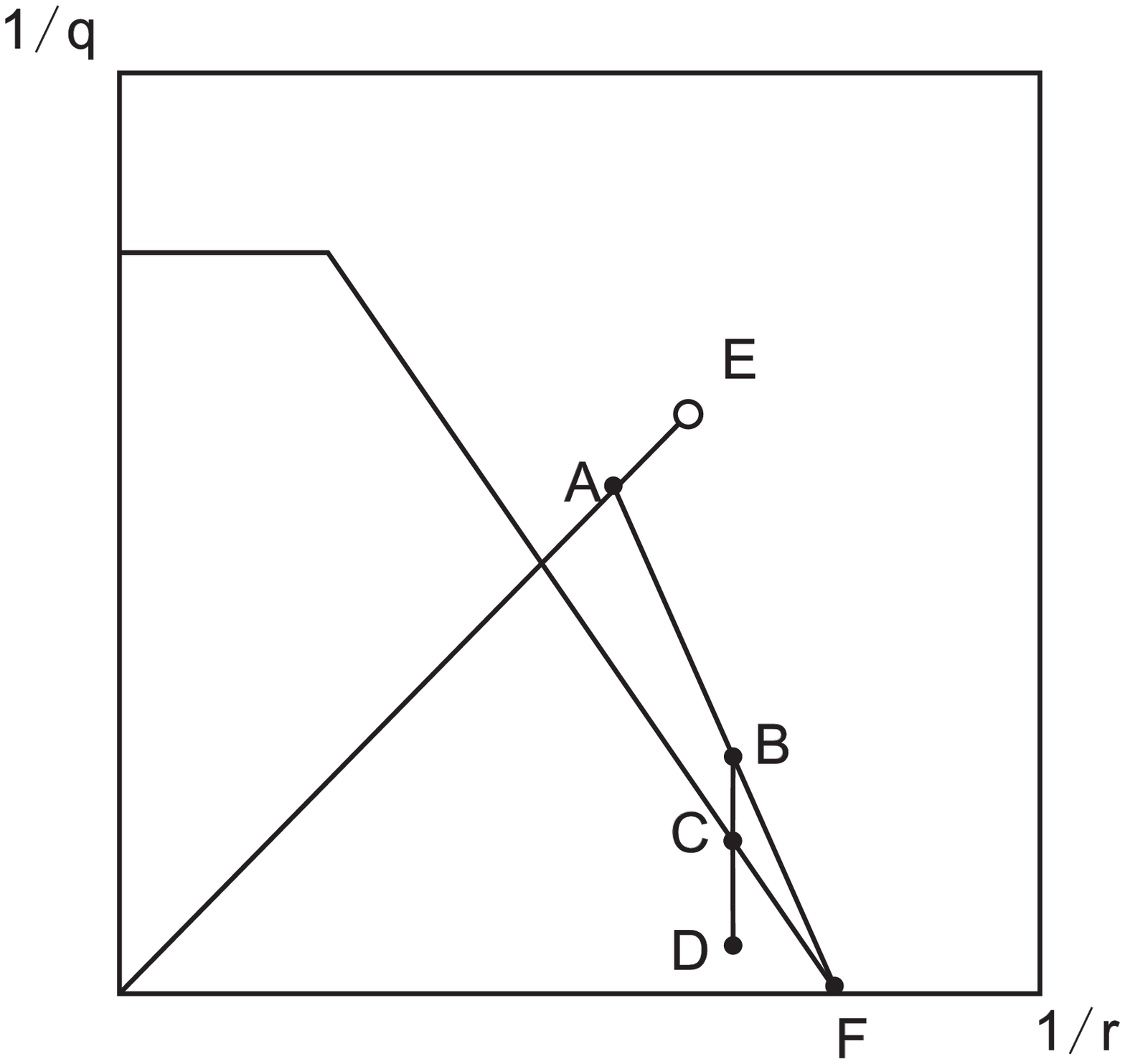}} \\
$A=(\frac{d-1}{2d}, \frac{d-1}{2d})$, $B=(\frac{3d-2}{6d},
\frac{d-1}{3d})$, $C=(\frac{3d-2}{6d}, \frac{1}{6})$,\\
\vspace*{0.3cm} $D=(\frac{3d-2}{6d}, \frac{1}{4d+2})$,
$E=(\frac{2d-1}{4d+2}, \frac{2d-1}{4d+2})$, $F=(\frac{1}{2}, 0)$.\\
\vspace*{0.3cm} {\bf Figure 1. Interpolation game board.}
\end{center}


 Now we consider the first
term (see Figure 1). In fact, we will show that
\begin{equation}\label{equ42}\Big\|P_{>AT^{-\frac{1}{2}}}e^{i(t-\frac{T}{2})\Delta}u(\frac{T}{2})\Big\|_{L_t^6L_x^\frac{6d}{3d-2}[T,2T]\times{\Bbb
R}^d}\lesssim_u A^{-\frac{2(d-1)(d-2)}{3(4d^2-5d-2)}},
\end{equation}
which is acceptable, since
$\frac{2(d-1)(d-2)}{3(4d^2-5d-2)}>\frac{1}{2d+6}$ for all $d\geq3$.
From Shao's Strichartz estimate (\ref{shao}), we have
\begin{equation}\label{45}\Big\|P_Me^{it\Delta}f\Big\|_{L_{t,x}^\frac{2d}{d-1}}\leq
CM^{-\frac{d-2}{2d}}\|f\|_{L_x^2}.
\end{equation} Meanwhile, the Strichartz estimate gives
\begin{equation}\label{46}
\|e^{it\Delta}f\|_{L_t^\infty L_x^2}\leq
C\|f\|_{L_x^2}.\end{equation} Interpolating between (\ref{45}) and
(\ref{46}) with $\theta=2/3$, we have
\begin{equation}\label{47}
\Big\|P_Me^{it\Delta}f\Big\|_{L_t^\frac{3d}{d-1}L_x^\frac{6d}{3d-2}}\leq
CM^{-\frac{d-2}{3d}}\|f\|_{L_x^2}.
\end{equation}
Thus, from the mass conservation, we have
\begin{equation}\label{equ44}
\Big\|P_{BT^{-\frac{1}{2}}}e^{i(t-\frac{T}{2})\Delta}u(\frac{T}{2})\Big\|_{L_t^\frac{3d}{d-1}L_x^\frac{6d}{3d-2}}\lesssim_u
\big(BT^{-\frac{1}{2}}\big)^{-\frac{d-2}{3d}}.
\end{equation}
Using the Duhamel formula, we write
\begin{equation*}
P_{BT^{-\frac{1}{2}}}e^{i(t-\frac{T}{2})\Delta}u(\frac{T}{2})=P_{BT^{-\frac{1}{2}}}e^{i(t-\varepsilon)\Delta}u(\varepsilon)
-i\int_\varepsilon^\frac{T}{2}P_{BT^{-\frac{1}{2}}}e^{i(t-t')\Delta}F(u(t'))dt'
\end{equation*}
for any $\varepsilon>0$. By self-similarity, the first term of RHS
converges strongly to zero in $L_x^2$ as $\varepsilon\rightarrow0$.
By Lemma \ref{Bern}, it also converges to zero in
$L_x^\frac{6d}{3d-2}$. Thus using the dispersive estimate
(\ref{de}), we obtain
\begin{align*}
\big\|P_{BT^{-\frac{1}{2}}}e^{i(t-\frac{T}{2})\Delta}u(\frac{T}{2})
&\big\|_{L_t^{4d+2}L_x^\frac{6d}{3d-2}([T,2T]\times\Bbb R^d)}\\
\lesssim& \Big\|\int_0^\frac{T}{2}\frac{1}{|t-t'|^\frac{1}{3}}\|F(u(t'))dt'\|_{L_x^\frac{6d}{3d+2}}\Big\|_{L_t^{4d+2}([T,2T]\times\Bbb R^d)}\\
\lesssim& T^{\frac{1}{4d+2}-\frac{1}{3}}\sum_{0<\tau<\frac{T}{4}}
\big\|F(u)\big\|_{L_t^1L_x^\frac{6d}{3d+2}([\tau,2\tau]\times\Bbb R^d)}\\
\lesssim& T^{\frac{1}{4d+2}-\frac{1}{3}}\sum_{0<\tau<\frac{T}{4}}\tau^\frac{1}{6}\big\|u\big\|_{L_t^\frac{18}{5}L_x^\frac{18d}{9d-10}([\tau,2\tau]\times\Bbb R^d)}^3\\
\lesssim&_u T^{\frac{1}{4d+2}-\frac{1}{6}},
\end{align*}
where the last inequality comes from (\ref{43}). Interpolating
between this estimate and (\ref{equ44}) with
$\theta=\frac{2d(d-1)}{4d^2-5d-2}$, we obtain that
\begin{equation*}\Big\|P_{BT^{-\frac{1}{2}}}e^{i(t-\frac{T}{2})\Delta}u(\frac{T}{2})\Big\|_{L_t^6L_x^\frac{6d}{3d-2}([T,2T]\times{\Bbb
R}^d)}\lesssim_u B^{-\frac{2(d-1)(d-2)}{3(4d^2-5d-2)}}.
\end{equation*}
Summing this over dyadic $B\geq A$ yields (\ref{equ42}) and
(\ref{equ41}).

Finally, we explain why (\ref{equ41}) implies
$\mathcal{S}(A)\lesssim_u A^{-\frac{1}{2d+6}}$. Choosing
$\eta=\frac{1}{2}$. Then there exists $A_0$ depending on $u$, so
that (\ref{equ41}) holds for all $A\geq A_0$. By (\ref{41}), we need
only bound $S(A)$ for $A\geq A_0$.

Choose $k\geq1$ so that $2^{-4k}A\leq A_0\leq 2^{-4(k-1)}A$. By
iterating (\ref{equ41}) $k$ times and using (\ref{41}),
\begin{align*}
S(A)\leq&\big[1+\eta2^\frac{4}{2d+6}+\cdots+(\eta2^\frac{4}{2d+6})\big]A^{-\frac{1}{2d+6}}+\eta^kS(2^{-4k}A)\\
\lesssim&_u A^{-\frac{1}{2d+6}}+\eta^k\lesssim_u
A^{-\frac{1}{2d+6}}.
\end{align*}
Note that the last inequality uses the way we choose $\eta$ and $k$.
\end{proof}
\begin{corollary}\label{co}
For any $A>0$, we have
\begin{equation*}
\mathcal{M}(A)+\mathcal{N}(A)+\mathcal{S}(A)\lesssim_u
A^{-\frac{1}{2d+6}}.
\end{equation*}
\end{corollary}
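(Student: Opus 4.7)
The plan is to upgrade the decay $\mathcal{S}(A)\lesssim_u A^{-1/(2d+6)}$ given by Proposition 5.1 to the same decay rate for $\mathcal{N}$ and then $\mathcal{M}$. The first step is essentially free: plugging $\mathcal{S}(A/8)\lesssim_u A^{-1/(2d+6)}$ together with the crude a priori bounds $\mathcal{M}(\sqrt{A}),\mathcal{M}(A/8),\mathcal{N}(A/8)\lesssim_u 1$ from \eqref{41} into the nonlinear estimate (Lemma 5.3) immediately yields
\[
\mathcal{N}(A)\lesssim_u A^{-1/(2d+6)}.
\]

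To estimate $\mathcal{M}(A)$ I would use the weak Duhamel identity of Corollary \ref{co1}. Since $u$ is self-similar with $\sup I=\infty$,
\[
u(T)=i\lim_{R\nearrow\infty}\int_T^R e^{i(T-t')\Delta}F(u(t'))\,dt'
\]
as a weak limit in $L_x^2$. Applying the Fourier multiplier $P_{>AT^{-1/2}}$, using weak lower semicontinuity of the $L_x^2$ norm, and the dual Strichartz estimate (the adjoint of Lemma \ref{stri} with $(\tilde{q},\tilde{r})=(2,2d/(d-2))$), I obtain
\[
\bigl\|P_{>AT^{-1/2}}u(T)\bigr\|_{L^2_x}\lesssim\bigl\|P_{>AT^{-1/2}}F(u)\bigr\|_{L^2_tL^{2d/(d+2)}_x([T,\infty)\times\mathbb{R}^d)}.
\]
I then decompose $[T,\infty)=\bigcup_{k\ge 0}[2^kT,2^{k+1}T]$. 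On the $k$-th dyadic block, setting $\tau=2^kT$, the frequency threshold satisfies $AT^{-1/2}=(A\cdot 2^{k/2})\tau^{-1/2}$, so by the definition of $\mathcal{N}$ and the bound from Step 1,
\[
\bigl\|P_{>AT^{-1/2}}F(u)\bigr\|_{L^2_tL^{2d/(d+2)}_x([2^kT,2^{k+1}T])}\leq\mathcal{N}(A\cdot 2^{k/2})\lesssim_u A^{-1/(2d+6)}\,2^{-k/(2(2d+6))}.
\]
Since the full $L^2_t$ norm squared is the sum of the squares over the disjoint blocks, the resulting geometric series in $k$ converges and gives $\mathcal{M}(A)\lesssim_u A^{-1/(2d+6)}$. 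Combining the three individual bounds (using \eqref{42} if one wishes to re-derive the estimate on $\mathcal{S}$) produces the corollary.

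The substantive work is the Duhamel step for $\mathcal{M}$, and the essential structural point that makes it go through is the direction of time: working \emph{forward} from $T$ ensures that on each later dyadic window $[2^kT,2^{k+1}T]$ the rescaled frequency cutoff $A\cdot 2^{k/2}$ is at least $A$, so every summand receives $\mathcal{N}$-decay at a large argument. A backward Duhamel would produce thresholds much smaller than $A$ on early time windows, where $\mathcal{N}$ has no useful smallness, and the dyadic sum would fail to close.
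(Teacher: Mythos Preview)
Your proposal is correct and follows essentially the same route as the paper: first $\mathcal{S}$ from Proposition~\ref{pro51}, then $\mathcal{N}$ from Lemma~\ref{NE} and \eqref{41}, and finally $\mathcal{M}$ via the forward Duhamel formula of Corollary~\ref{co1}, a dyadic time decomposition of $[T,\infty)$, and the observation that on $[2^kT,2^{k+1}T]$ the cutoff $AT^{-1/2}$ equals $(2^{k/2}A)\tau^{-1/2}$, yielding $\mathcal{M}(A)\lesssim\sum_k\mathcal{N}(2^{k/2}A)$. The only cosmetic differences are that the paper argues the weak Duhamel limit becomes strong (because $N(t)=t^{-1/2}\to 0$ forces $\|P_{>AT^{-1/2}}u(R)\|_{L^2}\to 0$) and then uses an $\ell^1$ triangle-inequality sum over $k$, whereas you use weak lower semicontinuity and an $\ell^2$ sum via the $L^2_t$ structure; both variants close equally well.
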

\begin{proof}
The bound on $\mathcal{S}(A)$ was derived in Proposition
\ref{pro51}. This together with Lemma \ref{NE} and (\ref{41}) yields
the bound on $\mathcal{N}(A)$.

We now turn to the bound on $\mathcal{M}(A)$. By Corollary
\ref{co1},
\begin{equation}\label{equ51}
\|P_{>AT^{-1/2}}u(T)\|_{L_x^2}\lesssim\sum_{k=0}^\infty\Big\|\int_{2^kT}^{2^{k+1}T}e^{i(T-t')\Delta}P_{>AT^{-1/2}}F(u(t'))dt'\Big\|_{L_x^2},
\end{equation}
where weak convergence has become strong convergence because of the
frequency projection and the fact that $N(t)=t^{-1/2}\rightarrow 0$
as $t\rightarrow\infty$. Combining (\ref{equ51}) with Strichartz
estimates, (\ref{m}) and (\ref{n}), we get
\begin{equation}\label{equ52}
\mathcal{M}(A)=\sup_{T>0}\|P_{>AT^{-1/2}}u(T)\|_{L_x^2}\lesssim\sum_{k=0}^\infty\mathcal{N}(2^{k/2}A).
\end{equation}
The desired bound on $\mathcal{M}(A)$ now follows from that on
$\mathcal{N}(A)$.
\end{proof}
{\it Proof of Theorem \ref{self-similar}.} Combining Lemma
\ref{NE} with Corollary \ref{co}, one has
\begin{equation*}
\mathcal{N}(A)\lesssim_u
A^{-\frac{1}{4d+12}}[\mathcal{S}(\frac{A}{8})+\mathcal{M}(\frac{A}{8})+\mathcal{N}(\frac{A}{8})].
\end{equation*}
Together with (\ref{equ51}) and (\ref{equ52}), this allows us to
deduce
$$\mathcal{S}(A)+\mathcal{M}(A)+\mathcal{N}(A)\lesssim_u A^{-\sigma}\quad\Rightarrow\quad
\mathcal{S}(A)+\mathcal{M}(A)+\mathcal{N}(A)\lesssim_u
A^{-\sigma-\frac{1}{4d+12}}$$ for any $\sigma>0$. Iterating this
statement shows that $u(t)\in H_x^s(\Bbb R^d)$ for all $s>0$.
\begin{corollary}[Absence of self-similar solutions] There are no
non-zero spherically symmetric solutions to (\ref{har}) that are
self-similar in the sense of Theorem \ref{3sc}.
\end{corollary}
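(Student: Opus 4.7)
The plan is to argue by contradiction. Suppose $u$ is a nonzero spherically symmetric self-similar solution in the sense of Theorem \ref{3sc}. By Theorem \ref{self-similar}, $u(t)\in H^s(\mathbb{R}^d)$ for every $s\geq 0$ and every $t\in(0,\infty)$; in particular $u\in C_t^0 H_x^1$, so the Hartree Hamiltonian
$$E(u(t)):=\tfrac{1}{2}\int_{\mathbb{R}^d}|\nabla u(t,x)|^2\,dx+\tfrac{\mu}{4}\int_{\mathbb{R}^d}(|x|^{-2}*|u(t)|^2)(x)\,|u(t,x)|^2\,dx$$
is well defined and conserved by the standard $H^1$ well-posedness and Hamiltonian structure of the Hartree equation.

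To pin down the value of the conserved energy I would quantify the high-frequency decay of $u(T)$. The bootstrap stated at the end of the proof of Theorem \ref{self-similar} actually yields $\mathcal{M}(A)\lesssim_{u,\sigma}A^{-\sigma}$ for every $\sigma>0$, and therefore $\|P_N u(T)\|_{L^2}\lesssim_{u,\sigma}(NT^{1/2})^{-\sigma}$ whenever $N\gtrsim T^{-1/2}$ and $T>0$. Decomposing $\|u(T)\|_{\dot H^1}^2$ dyadically and splitting at the threshold $N\sim T^{-1/2}$, the low frequencies contribute at most $T^{-1}M(u)$, while taking $\sigma$ large makes the high-frequency tail $\sum_{N>T^{-1/2}}N^2(NT^{1/2})^{-2\sigma}$ telescope to $O(T^{-1})$, giving
$$\|\nabla u(T)\|_{L^2}^2\lesssim_u T^{-1}\quad\text{as }T\to\infty.$$
Controlling the convolution term in $E$ by $\|\nabla u\|_{L^2}^2\,M(u)$ via the sharp Hartree--Gagliardo--Nirenberg inequality invoked below (or Hardy--Littlewood--Sobolev combined with Sobolev embedding), the potential energy obeys the same decay, so $E(u(T))\to 0$ as $T\to\infty$. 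By conservation, $E(u)\equiv 0$.

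Finally, I would invoke the sharp Hartree--Gagliardo--Nirenberg inequality whose extremizer is the ground state $Q$ of \eqref{ground},
$$\int_{\mathbb{R}^d}(|x|^{-2}*|u|^2)|u|^2\,dx\leq\frac{2\,M(u)}{M(Q)}\|\nabla u\|_{L^2}^2,$$
to derive coercivity of $E$. In the defocusing case $\mu=+1$ one has $E(u(t))\geq\tfrac12\|\nabla u(t)\|_{L^2}^2$ trivially; in the focusing case $\mu=-1$, the hypothesis $M(u)<M(Q)$ supplied by Theorem \ref{3sc} produces the strict coercive bound $E(u(t))\geq\tfrac12\bigl(1-M(u)/M(Q)\bigr)\|\nabla u(t)\|_{L^2}^2$. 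In either case $E\equiv 0$ forces $\|\nabla u(t)\|_{L^2}=0$ for every $t\in(0,\infty)$; combined with $u(t)\in L^2(\mathbb{R}^d)$ this yields $u\equiv 0$, contradicting the assumption that $u$ is nonzero.

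The main obstacle I anticipate is ensuring that the iteration in the proof of Theorem \ref{self-similar} can be closed at arbitrarily large $\sigma$, which is what allows the factor $N^2$ from $\dot H^1$ to be dominated at high frequencies; this should be essentially a bookkeeping matter since the recursion $\mathcal{S}(A)+\mathcal{M}(A)+\mathcal{N}(A)\lesssim A^{-\sigma-1/(4d+12)}$ improves the exponent additively. A secondary point is citing the sharp Hartree inequality with the correct normalization and the variational characterization of $Q$ precisely enough to obtain the coercive bound, since the strict inequality $M(u)<M(Q)$ must be used to exclude the ground state itself.
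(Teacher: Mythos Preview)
Your argument is correct, but it proceeds differently from the paper's two-line proof. The paper simply observes that, once Theorem~\ref{self-similar} puts $u(t_0)$ in $H^1$ for some $t_0>0$, the standard $H^1$ well-posedness theory for \eqref{har} (with the constraint $M(u)<M(Q)$ in the focusing case) yields a \emph{global} solution through $u(t_0)$; this contradicts the fact that a self-similar solution has lifespan $(0,\infty)$ and hence cannot be continued past $t=0$.

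Your approach instead exploits the full quantitative output of Theorem~\ref{self-similar}, namely $\mathcal{M}(A)\lesssim_{u,\sigma}A^{-\sigma}$ for every $\sigma$, to show $\|\nabla u(T)\|_{L^2}^2\lesssim_u T^{-1}$, deduce $E(u)\equiv 0$ by conservation, and then invoke the sharp inequality of Lemma~\ref{LV} to force $u\equiv 0$. This is essentially the same mechanism the paper uses later, in Theorem~\ref{tm61}, to eliminate the double high-to-low frequency cascade (there $N(t_n)\to 0$ plays the role of your $N(T)=T^{-1/2}\to 0$). The paper's route is shorter because it outsources the contradiction to an external global $H^1$ result, while yours is more self-contained and makes explicit use of the arbitrary-rate decay produced by the bootstrap; both are valid, and your dyadic computation splitting at $N\sim T^{-1/2}$ is clean. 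The only point worth tightening is the justification of energy conservation: since $u$ is a priori only an $L^2$ solution, you should note that $L^2$ uniqueness forces $u$ to coincide, on each compact subinterval of $(0,\infty)$, with the $C_t^0H_x^1$ solution emanating from any $u(t_0)$, which is what makes $E$ conserved.
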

\begin{proof}
By Theorem \ref{self-similar}, any such solution would obey $u(t)\in
H_x^1(\Bbb R^d)$ for all $t\in (0, \infty)$. Then there exists a
global solution with initial data $u(t_0)$ at any time $t_0\in (0,
\infty)$; recall that we assume $M(u)<M(Q)$ in the focusing case
(see \cite{MiXZ1}, \cite{MiXZ4}). On the other hand, self-similar
solutions blow up at time $t=0$. These two facts yield a
contradiction.
\end{proof}
\section{Additional regularities} This section is devoted to the proof
of Theorem \ref{tm51}. Before giving the proof, we record some basic
local estimates. From mass conservation we have
\begin{equation}\label{51}\|u\|_{L_t^\infty L_x^2}\lesssim_u 1,\end{equation}
while from Definition \ref{aps} and the fact that $N(t)$ is bounded,
we have
\begin{equation}\label{52}\lim_{N\rightarrow\infty}\|u_{\geq N}\|_{L_t^\infty L_x^2(\Bbb R\times{\Bbb R}^d)}=0.\end{equation}
From Proposition \ref{sb} and $N(t)\lesssim1$, we have
\begin{equation}\label{53}\|u\|_{L_t^6L_x^\frac{6d}{3d-2}(J\times{\Bbb R}^d)}\lesssim_u \langle|J|\rangle^\frac{1}{6}\end{equation}
for all intervals $J\subset\Bbb R$. By H\"{o}lder's inequality and
Hardy-Littlewood-Sobolev inequality, this implies
\begin{equation}\label{54}\|F(u)\|_{L_t^2L_x^\frac{2d}{d+2}(J\times{\Bbb R}^d)}\lesssim_u \langle|J|\rangle^{\frac{1}{2}}\end{equation}
and then by Strichartz estimates (Lemma \ref{stri}),
\begin{equation}\label{55}\|u\|_{L_t^qL_x^r(J\times{\Bbb R}^d)}\lesssim \langle|J|\rangle^\frac{1}{q},\end{equation}
for any admissible pair $(q, r)$. Similarly, the weighted Strichartz
estimates imply that
\begin{equation}\label{56}\||x|^\frac{2(d-1)}{q}u\|_{L_t^q L_x^\frac{2q}{q-4}(J\times\Bbb R^d)}\lesssim_u \langle|J|\rangle^\frac{1}{q}.\end{equation}
Now for any dyadic number $N$, define
\begin{equation}\label{58}\mathcal{M}(N):=\|u_{\geq N}\|_{L_t^\infty L_x^2(\Bbb R\times{\Bbb R}^d)},\end{equation}
then we see that $\mathcal{M}(N)\lesssim_u 1$ and
\begin{equation}\label{59}\lim_{N\rightarrow\infty}\mathcal{M}(N)=0.\end{equation}

To prove Theorem \ref{tm51}, it suffices to show that
$\mathcal{M}(N)\lesssim_{u,s} N^{-s}$ for any $s>0$ and all $N$
sufficiently large depending on $u$ and $s$. This will immediately
follow from iterating the following proposition with a suitably
choice of small $\eta$ (depending on $u$ and $s$):
\begin{proposition}[Regularity] \label{tm52}Let $u$ be as in Theorem \ref{tm51}
and let $\eta>0$ be a small number. Then
\begin{equation}\label{510}\mathcal{M}(N)\lesssim_u \eta\mathcal{M}\big(\frac{N}{64}\big)\end{equation}
whenever $N$ is sufficiently large depending on $u$ and $\eta$.
\end{proposition}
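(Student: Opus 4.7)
The strategy adapts the approach of Section 5 to the bounded-$N(t)$ regime of Theorem \ref{tm51}. The starting point is the Duhamel formula from Corollary \ref{co1}: for every $t\in\mathbb{R}$,
$$u_{\geq N}(t) \;=\; \lim_{T\to\pm\infty}(\mp i)\int_t^T e^{i(t-s)\Delta}P_{\geq N}F(u(s))\,ds$$
as a weak $L^2_x$ limit. Pairing the forward and backward representations (the ``double Duhamel trick'') reduces the bound on $\|u_{\geq N}(t)\|_{L^2_x}^2$ to estimating $P_{\geq N}F(u)$ in a suitable Strichartz norm, which we then test against a second copy of the same Duhamel integral.

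To bound $P_{\geq N}F(u)=P_{\geq N}[(V\ast|u|^2)u]$ with $V=|x|^{-2}$, I would decompose every factor of $u$ as $u_{\leq N/64}+u_{>N/64}$ and expand. Triple low-frequency interactions are annihilated by $P_{\geq N}$; the terms containing two or three high-frequency factors are directly controlled by $\mathcal{M}(N/64)^{2}$ combined with the a priori bounds (\ref{53})--(\ref{55}) and Hardy--Littlewood--Sobolev, producing the desired $\eta\,\mathcal{M}(N/64)$ shape since $\mathcal{M}(N/64)$ itself is small for large $N$ by (\ref{59}).

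The decisive term is the single-high-frequency piece $(V\ast|u_{\leq N/64}|^2)\,u_{>N/64}$. Because $V\ast|u_{\leq N/64}|^2$ is not frequency-localized, the bilinear Strichartz estimate (Lemma \ref{bi}) is unavailable, as flagged in the introduction. My plan is to exploit the radial assumption through a spatial dichotomy: split $\mathbb{R}^d=\{|x|\leq R\}\cup\{|x|>R\}$ for $R=R(\eta)$ chosen large. On the inner region, I would combine Shao's radial Strichartz estimate (Lemma \ref{sha}) with its dual version, following the template used to treat term $II$ in the proof of Proposition \ref{pro51}, extracting a small power of $N$. On the outer region, almost periodicity modulo $G_{rad}$ with $N(t)\lesssim 1$ confines the essential mass of $u_{\leq N/64}$ to $|y|\leq C(\eta)$, so that $V\ast|u_{\leq N/64}|^2(x)\lesssim |x|^{-2}M(u)$ for $|x|>R$; the weighted Strichartz estimate (\ref{st}) then absorbs this decay into its weight $|x|^{2(d-1)/q}$ (by choosing $q$ so that $2(d-1)/q\leq 2$), again producing a factor of $\eta$.

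Collecting all contributions, summing over unit-length time intervals via the Duhamel formula of Step 1, and using the weak convergence in Corollary \ref{co1} to pass to the limit yields the desired estimate $\mathcal{M}(N)\lesssim_u \eta\,\mathcal{M}(N/64)$. The main obstacle is precisely the single-high-frequency term above: the non-locality and frequency singularity of the Hartree convolution destroys the bilinear structure exploited in \cite{KTV,KVZ}, and the two-region argument pairing Shao's estimate with weighted Strichartz is what replaces the missing bilinear bound. This is also the reason the radial hypothesis is indispensable here, since both Lemma \ref{sha} and Lemma \ref{ws} require spherical symmetry.
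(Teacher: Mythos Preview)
Your proposal has a genuine gap: it lacks a mechanism for \emph{time-integrability} over the infinite Duhamel interval. The paper does not use a double Duhamel trick here; it uses the in/out decomposition $P^++P^-$ (for $|x|>N^{-1}$, with the plain propagator for $|x|\leq N^{-1}$) and splits the integration region in $(t,y)$-space, not in $x$-space. After a short-time estimate on $[0,\delta]$ (Lemma~\ref{tm53}), the integral over $[\delta,\infty)$ is broken into dyadic pieces $[2^k\delta,2^{k+1}\delta]$ and, for each shell $M\geq N$, into $|y|\gtrsim M|t|$ and $|y|\ll M|t|$. In the second region the kernel bound (\ref{516}) yields a factor $(M^2|t|)^{-50d}$, summable in both $k$ and $M$ (Lemma~\ref{tm55}). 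In the first region the constraint $|y|\gtrsim M2^k\delta$ is precisely what converts the spatial weight in Lemma~\ref{ws} into decay in $k$: the estimates in Lemma~\ref{tm54} extract simultaneous negative powers of $M$ and of $2^k\delta$. Your splitting $\{|x|\leq R\}\cup\{|x|>R\}$ with $R$ independent of $t$ cannot do this. Even if your bound on $(V\ast|u_{\leq N/64}|^2)u_{>N/64}$ in a dual Strichartz norm on each unit interval were correct, the sum over infinitely many intervals diverges, since the local bounds (\ref{53})--(\ref{56}) grow like $\langle|J|\rangle^{1/q}$. The sentence ``reduces the bound \ldots to estimating $P_{\geq N}F(u)$ in a suitable Strichartz norm'' is exactly where the argument fails: dual Strichartz would require a \emph{global} bound on $\|P_{\geq N}F(u)\|_{L^{q'}_tL^{r'}_x(\mathbb{R})}$, which is unavailable. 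A genuine double Duhamel would instead give the bilinear pairing $\langle P_{\geq N}F(u(s)),e^{i(s-s')\Delta}P_{\geq N}F(u(s'))\rangle$ with $s>0>s'$, and one must then extract dispersive decay in $|s-s'|$; you have not carried this out, and for the Hartree nonlinearity it is delicate since $F(u)\notin L^1_x$.

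A secondary issue: the pointwise claim $V\ast|u_{\leq N/64}|^2(x)\lesssim |x|^{-2}M(u)$ for $|x|>R$ is not justified. Almost periodicity only gives $\int_{|y|>C(\eta)}|u|^2\leq\eta$ (and that for $u$, not for $u_{\leq N/64}$), leaving a near-diagonal piece $\int_{|x-y|\ll|x|}|u_{\leq N/64}(y)|^2|x-y|^{-2}\,dy$ that is not controlled by $|x|^{-2}$. The paper handles the analogous difficulty inside Lemma~\ref{tm54} by splitting the convolution at $|x-y|=|y|/2$ and using (\ref{56}) on the near piece; this again relies on the $|y|\gtrsim M|t|$ localization to turn the weight into time decay.
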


The rest of this section is devoted to proving Proposition
\ref{tm52}. Our task is to show that $$\|u_{\geq
N}(t_0)\|_{L_x^2{({\Bbb R}^d)}}\lesssim_u \eta
\mathcal{M}\Big(\frac{N}{64}\Big)$$ for all times $t_0$ and all $N$
sufficiently large (depending on $u$ and $\eta$). By time
translation symmetry, we may assume $t_0=0$. 
By Corollary 1.1, we have
\begin{align}\label{511}
u_{\geq N}(0)=&(P^++P^-)u_{\geq N}(0)\\
=&\lim_{T\rightarrow\infty}i\int_0^TP^+e^{-it\Delta}P_{\geq
N}F(u(t))dt-i\lim_{T\rightarrow\infty}\int_{-T}^0P^-e^{-it\Delta}P_{\geq
N}F(u(t))dt,\nonumber\end{align} where the limit is to be
interpreted as a weak limit in $L^2$. However, this representation
is not useful for $|x|$ small because the kernels of $P^\pm$ have a
logarithmic singularity at $x=0$. To deal with this, we will use a
different representation for $|x|\leq N^{-1}$, namely
\begin{equation}\label{512}u_{\geq N}(0)=\lim_{T\rightarrow\infty}i\int_0^Te^{-it\Delta}P_{\geq N}F(u(t))dt,\end{equation}
also as a weak limit. To deal with the poor nature of these limits,
we use the fact that
\begin{equation}\label{513}f_T\rightarrow f\quad \text{weakly along a subsequence}\Rightarrow
\|f\|\leq\limsup_{T\rightarrow\infty}\|f_T\|,\end{equation} or
equivalently, that the unit ball is weakly closed.

Despite different representations will be used depending on the size
of $|x|$, some estimates can be dealt with in a uniform manner. The
first such example is a bound on integrals over short times.
\begin{lemma}[Local estimate]\label{tm53} For any $\eta>0$, there
exists $\delta=\delta(u,\eta)>0$ such that
$$\Big\|\int_0^\delta e^{-it\Delta}P_{\geq N}F(u(t))dt\Big\|_{L_x^2}\lesssim_u \eta\mathcal{M}\big(\frac{N}{8}\big),$$
provided that $N$ is sufficiently large depending on $u$ and $\eta$.
An analogous estimate holds for integration over $[-\delta,0]$ and
after pre-multiplication by $P^\pm$ {\rm (}they are bounded
operators on $L_x^2${\rm )}.
\end{lemma}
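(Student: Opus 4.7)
The plan is to apply a dual Strichartz estimate, extract a factor of $\mathcal{M}(N/8)$ from a designated high-frequency factor of $F(u)$, and absorb the remaining smallness into $\delta$ via absolute continuity of a Strichartz norm of $u$ on $[0,1]$.

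First I would dualize with the admissible Strichartz pair $(2,\tfrac{2d}{d-2})$ to get
\[
\Bigl\|\int_0^\delta e^{-it\Delta}P_{\geq N}F(u(t))\,dt\Bigr\|_{L^2_x} \lesssim \|P_{\geq N}F(u)\|_{L^2_tL^{2d/(d+2)}_x([0,\delta]\times\mathbb R^d)}.
\]
Next I would split $u=u_{hi}+u_{lo}$ with $u_{hi}:=P_{\geq N/8}u$ and expand $F(u)=(V*|u|^2)u$ into eight trilinear pieces. The purely low-frequency piece $(V*|u_{lo}|^2)u_{lo}$ has Fourier support in $\{|\xi|<N\}$, so $P_{\geq N}$ annihilates it, and only seven terms, each carrying at least one factor of $u_{hi}$, survive.

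For each surviving term I would designate one $u_{hi}$-factor as ``the'' high-frequency slot and apply Hardy--Littlewood--Sobolev in the symmetric spatial configuration
\[
\|(V*(v_1\bar v_2))v_3\|_{L^{2d/(d+2)}_x}\lesssim \|w\|_{L^2_x}\,\|v'\|_{L^{2d/(d-1)}_x}\,\|v''\|_{L^{2d/(d-1)}_x},
\]
where $w$ is the designated factor and $v',v''$ are the remaining two; crucially, this configuration works whether $w$ sits inside or outside the convolution. In time I would place $w$ at $L^\infty_t$ (so that $\|w\|_{L^\infty_tL^2_x}\leq \mathcal{M}(N/8)$) and $v',v''$ at $L^4_t$, using $0+\tfrac14+\tfrac14=\tfrac12$ and that $(4,\tfrac{2d}{d-1})$ is Strichartz-admissible. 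The uniform $L^{2d/(d-1)}_x$-boundedness of the Littlewood--Paley projections then lets me replace $u_{hi}$, $u_{lo}$ by $u$ in those slots, so each term is dominated by $\mathcal{M}(N/8)\,\|u\|_{L^4_tL^{2d/(d-1)}_x([0,\delta])}^2$.

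To close, I would invoke \eqref{55} for $(4,\tfrac{2d}{d-1})$, which gives $t\mapsto\|u(t)\|^4_{L^{2d/(d-1)}_x}\in L^1_t([0,1])$ with integral $\lesssim_u 1$; absolute continuity of the Lebesgue integral then furnishes $\delta=\delta(u,\eta)$ with $\|u\|_{L^4_tL^{2d/(d-1)}_x([0,\delta])}\leq \eta^{1/2}$. Summing over the finitely many terms yields $\lesssim_u \eta\,\mathcal{M}(N/8)$; the $[-\delta,0]$ case follows by time-reversal, and pre-multiplication by $P^\pm$ is absorbed by their $L^2_x$-boundedness. The main obstacle is to pick a single HLS configuration that accommodates both the inside and outside placements of the designated high-frequency factor without any Bernstein-type $N^\epsilon$ loss; the symmetric spatial exponents above do exactly this, and in turn dictate the choice of dual Strichartz pair $(2,\tfrac{2d}{d-2})$.
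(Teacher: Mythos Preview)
Your Strichartz/HLS configuration is correct and the bound
\[
\|P_{\geq N}F(u)\|_{L^2_tL^{2d/(d+2)}_x([0,\delta])}\lesssim \mathcal{M}(N/8)\,\|u\|^2_{L^4_tL^{2d/(d-1)}_x([0,\delta])}
\]
is valid (and cleaner than the paper's term-by-term estimates). The gap is in the last step. Absolute continuity of $t\mapsto\|u(t)\|^4_{L^{2d/(d-1)}_x}$ on $[0,1]$ produces a $\delta$ that depends on the particular time-origin of $u$, not merely on its translation-invariant data. But this lemma is invoked in Proposition~\ref{tm52} only \emph{after} an arbitrary $t_0$ has been translated to $0$, and the subsequent Lemmas~\ref{tm54}--\ref{tm55} need $N$ large depending on $\delta$; if $\delta$ varies with $t_0$, so does the $N$-threshold, and one cannot take the supremum over $t_0$ that defines $\mathcal{M}$. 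Concretely, knowing $\int_{t_0}^{t_0+1}\|u(t)\|^4\,dt\lesssim_u 1$ for every $t_0$ does not yield a single $\delta$ with $\int_{t_0}^{t_0+\delta}\|u(t)\|^4\,dt\leq\eta$ for all $t_0$.

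The paper sidesteps this by a three-way split $u=u_{\geq N/8}+u_{N_0\leq\cdot<N/8}+u_{<N_0}$, with $N_0$ chosen via \eqref{52} so that $\|u_{\geq N_0}\|_{L^\infty_tL^2_x}\leq\eta^2$. Terms containing a middle- or high-frequency factor besides $u_{\geq N/8}$ inherit an $\eta$ from this smallness (uniform in $t$); terms with two $u_{<N_0}$ factors pick up an explicit $|J|^{1/2}N_0$ from Bernstein, so one may take $\delta\sim(\eta/N_0)^2$, uniform over all intervals of that length. Your argument can be repaired along the same lines: split $\|u\|_{L^4_tL^{2d/(d-1)}_x(J)}\leq\|u_{<N_0}\|+\|u_{\geq N_0}\|$, bound the first piece by $|J|^{1/4}N_0^{1/2}\|u\|_{L^\infty_tL^2_x}$ via Bernstein, and the second by $\eta\langle|J|\rangle^{1/4}$ via interpolation between $L^\infty_tL^2_x$ and $L^2_tL^{2d/(d-2)}_x$. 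But the bare absolute-continuity step, as written, does not deliver the required uniformity.
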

\begin{proof} By Strichartz estimates, it suffices to prove
$$\|P_{\geq N}F(u)\|_{L_t^2 L_x^\frac{2d}{d+2}(J\times{\Bbb R}^d)}\lesssim_u \eta\mathcal{M}\big(\frac{N}{8}\big)$$
for any interval $J$ with length $|J|\leq\delta$ and all
sufficiently large $N$ depending on $u$ and $\eta$.

 From (\ref{52}),
there exists $N_0=N_0(u,\eta)$ such that
\begin{equation}\label{514}\|u_{\geq N_0}\|_{L_t^\infty L_x^2(\Bbb R\times{\Bbb R}^d)}\leq \eta^2.\end{equation}
Let $N>8N_0$. We decompose
$$u=u_{\geq\frac{N}{8}}+u_{N_0\leq\cdot\leq\frac{N}{8}}+u_{<N_0}.$$
Any term in the resulting expansion of $P_{\geq N}F(u)$ that doesn't
contain the factor of $u_{\geq\frac{N}{8}}$ vanishes.

At first, we consider the terms with two factors of the form
$u_{<N_0}$. Using H\"{o}lder's inequality, Hardy-Littlewood-Sobolev
inequality, (\ref{51}) and Lemma \ref{Bern},
\begin{align*}\big\|\big(V*(u_{>\frac{N}{8}}u_{<N_0})\big)u_{<N_0}\big\|_{L_t^2L_x^\frac{2d}{d+2}(J\times{\Bbb R}^d)}
\lesssim&\|V*(u_{>\frac{N}{8}}u_{<N_0})\|_{L_t^4L_x^\frac{2d}{3}(J\times\Bbb
R^d)}\|u_{<N_0}\|_{L_t^4L_x^\frac{2d}{d-1}(J\times\Bbb R^d)}\\
\lesssim&\|u_{>\frac{N}{8}}u_{<N_0}\|_{L_t^4L_x^\frac{2d}{2d-1}(J\times\Bbb
R^d)}\|u_{<N_0}\|_{L_t^4L_x^\frac{2d}{d-1}(J\times\Bbb R^d)}\\
\lesssim&\|u_{>\frac{N}{8}}\|_{L_t^\infty L_x^2}\|u_{<N_0}\|_{L_t^4L_x^\frac{2d}{d-1}(J\times\Bbb
R^d)}^2\\
\lesssim&|J|^\frac{1}{2}N_0\mathcal{M}(\frac{N}{8})
\end{align*}
and
\begin{align*}
\big\|\big(V*u_{<N_0}^2\big)u_{>\frac{N}{8}}\big\|_{L_t^2L_x^\frac{2d}{d+2}(J\times{\Bbb
R}^d)}
\lesssim&\big\|V*u_{<N_0}^2\big\|_{L_t^2L_x^d(J\times{\Bbb R}^d)}\big\|u_{>\frac{N}{8}}\big\|_{L_t^\infty L_x^2(\Bbb R\times{\Bbb R}^d)}\\
\lesssim&\big\|u_{<N_0}\big\|_{L_t^4L_x^\frac{2d}{d-1}(J\times{\Bbb R}^d)}^2\mathcal{M}(\frac{N}{8})\\
\lesssim&|J|^\frac{1}{2}N_0\mathcal{M}(\frac{N}{8}).
\end{align*}
Choosing $\delta$ sufficiently small depending on $\eta$ and $N_0$,
we see that they are acceptable.

It remains only to consider those components of $P_{\geq N}F(u)$
which involve $u_{\geq\frac{N}{8}}$ and at least one of the other
terms is not $u_{<N_0}$.
\begin{align*}
\big\|V*(u_{>\frac{N}{8}}u_{\geq
N_0})u\big\|_{L_t^2L_x^\frac{2d}{d+2}(J\times{\Bbb R}^d)}
\lesssim&\big\|V*(u_{>\frac{N}{8}}u_{\geq N_0})\big\|_{L_t^4
L_x^\frac{2d}{3}(J\times{\Bbb R}^d)}\|u\|_{L_t^4L_x^\frac{2d}{d-1}(J\times{\Bbb R}^d)}\\
\lesssim&\big\|u_{>\frac{N}{8}}\big\|_{L_t^\infty L_x^2(J\times{\Bbb
R}^d)}\big\|u_{\geq N_0}\big\|_{L_t^4
L_x^\frac{2d}{d-1}(J\times{\Bbb
R}^d)}\langle|J|\rangle^{\frac{1}{4}}.
\end{align*}
By (\ref{55}), we get
$$\|u_{>N_0}\|_{L_t^2L_x^\frac{2d}{d-2}(J\times\Bbb R^d)}\lesssim\langle|J|\rangle^\frac{1}{2}.$$
Therefore, by interpolation between this and (\ref{514}), we have
\begin{equation*}
\|u_{>N_0}\|_{L_t^4L_x^\frac{2d}{d-1}(J\times\Bbb
R^d)}\lesssim\eta\langle|J|\rangle^\frac{1}{4}.
\end{equation*}
Thus, we obtain that
\begin{equation*}
\big\|V*(u_{>\frac{N}{8}}u_{\geq
N_0})u\big\|_{L_t^2L_x^\frac{2d}{d+2}(J\times{\Bbb
R}^d)}\lesssim\eta\mathcal{M}\big(\frac{N}{8}\big)\langle|J|\rangle^\frac{1}{2}.
\end{equation*}
Similarly, we can estimate
\begin{align*}\big\|V*\big(u_{>\frac{N}{8}}u\big)u_{\geq N_0}\big\|_{L_t^2L_x^\frac{2d}{d+2}(J\times{\Bbb R}^d)}
\lesssim&\big\|V*\big(u_{>\frac{N}{8}}u\big)\big\|_{L_t^2L_x^d(J\times{\Bbb
R}^d)}\big\|u_{\geq
N_0}\big\|_{L_t^\infty L_x^2(J\times{\Bbb R}^d)}\\
\lesssim&\big\|u_{>\frac{N}{8}}u\big\|_{L_t^2L_x^\frac{d}{d-1}(J\times{\Bbb
R}^d)}\big\|u_{\geq
N_0}\big\|_{L_t^\infty L_x^2(J\times{\Bbb R}^d)}\\
\lesssim&\big\|u_{>\frac{N}{8}}\big\|_{L_t^\infty L_x^2(J\times{\Bbb
R}^d)}\big\|u_{\geq N_0}\big\|_{L_t^\infty L_x^2
(J\times{\Bbb R}^d)}\|u\|_{L_t^2L_x^\frac{2d}{d-2}(J\times{\Bbb R}^d)}\\
\lesssim&\eta^2\mathcal{M}\big(\frac{N}{8}\big)\langle|J|\rangle^{\frac{1}{2}},
\end{align*}
where the last inequality comes from (\ref{514}). Another term
$\big\|V*\big(u_{\geq
N_0}u\big)u_{>\frac{N}{8}}\big\|_{L_t^2L_x^\frac{2d}{d+2}(J\times{\Bbb
R}^d)}$ can be similarly estimated.
\end{proof}

We now turn our attention to $|t|\geq\delta$. In this case we make
the decomposition $$P_{\geq N}=\sum_{M\geq N}P_M\tilde{P}_M,$$ where
$\tilde{P}_M:=P_{M/2}+P_M+P_{2M}$. In this way, (\ref{512}) becomes
\begin{align}\label{31}u_{\geq N}(0,x)=&\ i\int_0^\delta e^{-it\Delta}P_{\geq
N}F(u(t))dt\nonumber\\
&\quad+\lim_{T\rightarrow\infty}\sum_{M\geq
N}i\int_\delta^T\int_{{\Bbb
R}^d}[P_Me^{-it\Delta}](x,y)[\tilde{P}_MF(u(t))](y)dydt,
\end{align}
which we will use when $|x|\leq N^{-1}$. The analogous reformulation
of (\ref{511}), namely
\begin{align}\label{32}
u_{\geq N}(0,x)=&\ i\int_0^\delta P^+e^{-it\Delta}P_{\geq
N}F(u(t))dt-i\int_{-\delta}^0 P^-e^{-it\Delta}P_{\geq
N}F(u(t))dt\nonumber\\
&+\lim_{T\rightarrow\infty}\sum_{M\geq N}i\int_\delta^T\int_{{\Bbb R}^d}[P_M^+e^{-it\Delta}](x,y)[\tilde{P}_MF(u(t))](y)dydt\\
&-\lim_{T\rightarrow\infty}\sum_{M\geq
N}i\int^{-\delta}_{-T}\int_{{\Bbb
R}^d}[P_M^-e^{-it\Delta}](x,y)[\tilde{P}_MF(u(t))](y)dydt,\nonumber
\end{align}
will be used when $|x|>N^{-1}$.

The next lemma bounds the integrals over the significant region
$|y|\gtrsim M|t|$. Let $\chi_k$ denote the characteristic function
of the set $$\{(t,y): 2^k\delta\leq|t|\leq 2^{k+1}\delta, |y|\gtrsim
M|t|\}.$$
\begin{lemma}[Main contribution]\label{tm54}Let $\eta>0$ be a small
number and let $\delta$ be as in Lemma \ref{tm53}. Then
\begin{equation}\label{515}\sum_{M\geq
N}\sum_{k=0}^\infty\Big\|\int_{2^k\delta}^{2^{k+1}\delta}
\int_{{\Bbb
R}^d}[P_Me^{-it\Delta}](x,y)\chi_k(t,y)[\tilde{P}_MF(u(t))](y)
dydt\Big\|_{L_x^2}\lesssim_u
\eta\mathcal{M}(\frac{N}{64})\end{equation} for all $N$ sufficiently
large depending on $u$ and $\eta$. An analogous estimate holds with
$P_M$ replaced by $P_M^+$ and $P_M^-$; moreover, the time integrals
may be taken over $[-2^{k+1}\delta, -2^k\delta]$.
\end{lemma}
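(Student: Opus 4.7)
The strategy is to dualize against a radial $L^2$ test function and apply the weighted Strichartz estimate of Lemma \ref{ws} on the region $|y|\gtrsim M|t|$; the resulting gain $(M\cdot 2^k\delta)^{-2(d-1)/q}$ will drive the geometric summation over $M\geq N$ and $k\geq 0$. Write
$$I_{M,k}(x) := \int_{2^k\delta}^{2^{k+1}\delta}\!\!\int_{\mathbb{R}^d} [P_Me^{-it\Delta}](x,y)\chi_k(t,y)[\tilde P_M F(u(t))](y)\,dy\,dt.$$
Since $u$ is radial, so is each $I_{M,k}$, hence $\|I_{M,k}\|_{L^2_x} = \sup|\langle I_{M,k},\phi\rangle|$, the supremum taken over radial $\phi \in L^2_x$ with $\|\phi\|_{L^2}=1$. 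Pick $q\in[4,2d-1)$ (nonempty for every $d\geq 3$), set $r:=2q/(q-4)$, and split the integrand as $|y|^{-2(d-1)/q}\cdot|y|^{2(d-1)/q}$. Applying Hölder in $(t,y)$ followed by Lemma \ref{ws} to the radial function $P_Me^{it\Delta}\phi$ gives
$$\|I_{M,k}\|_{L^2_x}\lesssim \big\||y|^{-2(d-1)/q}\chi_k\tilde P_M F(u)\big\|_{L^{q'}_tL^{r'}_y([2^k\delta,2^{k+1}\delta]\times\mathbb{R}^d)}.$$
On $\mathrm{supp}\,\chi_k$ we have $|y|\gtrsim M\cdot 2^k\delta$, so
$$\|I_{M,k}\|_{L^2_x}\lesssim (M\cdot 2^k\delta)^{-2(d-1)/q}\,\|\tilde P_MF(u)\|_{L^{q'}_tL^{r'}_y([2^k\delta,2^{k+1}\delta]\times\mathbb{R}^d)}.$$

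For the remaining norm, expand $F(u)=(|x|^{-2}*|u|^2)u$ and decompose $u=u_{<N/64}+u_{\geq N/64}$. Since $M\geq N$, the projection $\tilde P_M$ annihilates the pure low-frequency term $(|x|^{-2}*|u_{<N/64}|^2)u_{<N/64}$, which is Fourier-supported in $\{|\xi|\leq 3N/64\}$. Every remaining summand carries at least one factor of $u_{\geq N/64}$; placing this factor in $L^\infty_tL^2_x$ extracts the gain $\mathcal{M}(N/64)$. The other factors are controlled by Hölder, the Hardy-Littlewood-Sobolev inequality for the convolution, Bernstein applied to $\tilde P_M$, and the spacetime bounds (\ref{51})--(\ref{55}); a case-by-case analysis yields
$$\|\tilde P_MF(u)\|_{L^{q'}_tL^{r'}_y([2^k\delta,2^{k+1}\delta]\times\mathbb{R}^d)}\lesssim_u (2^k\delta)^{1/q'}\mathcal{M}(N/64).$$

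Combining, with $\alpha:=2(d-1)/q>0$ and $\beta:=1-(2d-1)/q<0$, the geometric series in $k$ and in dyadic $M\geq N$ both converge and deliver
$$\sum_{M\geq N}\sum_{k\geq 0}\|I_{M,k}\|_{L^2_x}\lesssim_u N^{-\alpha}\delta^{\beta}\mathcal{M}(N/64).$$
Given $\eta>0$ and the fixed $\delta=\delta(u,\eta)$ produced by Lemma \ref{tm53}, take $N$ so large that $N^{-\alpha}\delta^{\beta}\leq\eta$; this is possible since $\alpha>0$. The $P_M^{\pm}$ versions and the negative-time windows are handled identically, since Lemma \ref{ws} is symmetric in time and $P^{\pm}$ are $L^2$-bounded. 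The main obstacle is the bookkeeping for $\|\tilde P_MF(u)\|_{L^{q'}_tL^{r'}_y}$: because of the nonlocal convolution $|x|^{-2}*|u|^2$, one must track case by case which factor absorbs the high-frequency projection (supplying $\mathcal{M}(N/64)$), which sits inside the convolution (requiring HLS), and which carries $\tilde P_M$ (supplying Bernstein gains). The exponents $q$ and $r'$ must be tuned so that the time-growth factor $(2^k\delta)^{1/q'}$ combines with the weight gain $2(d-1)/q$ to produce the overall $\beta<0$ scaling; the admissible window $q\in[4,2d-1)$ is rather tight in $d=3$, where one may work with $q=4+\varepsilon$ to keep $r'>1$ for the Bernstein step.
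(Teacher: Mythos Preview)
Your approach is correct and genuinely different from the paper's. The paper first applies the standard dual Strichartz inequality (together with the $L^2$-boundedness of $P_M$ or $P_M^\pm$ on $|x|\gtrsim N^{-1}$) to reduce to bounding $\|\chi_k(V*|u|^2)\tilde P_{>M/8}u\|_{L^1_tL^2_y}$ and $\|\chi_k\,u\,\tilde P_{>M/8}(V*|u|^2)\|_{L^2_tL^{2d/(d+2)}_y}$, and then exploits the localization $|y|\gtrsim M2^k\delta$ by applying the weighted Strichartz bound \eqref{56} \emph{to the solution $u$ itself}; because the Hartree convolution is nonlocal, this forces a further splitting of $V*|u|^2$ into the regions $|x-y|\ge|y|/2$ and $|x-y|<|y|/2$. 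You instead dualize and apply Lemma~\ref{ws} to the free radial wave $P_Me^{it\Delta}\phi$, which extracts the factor $(M2^k\delta)^{-2(d-1)/q}$ up front and leaves only the unweighted norm $\|\tilde P_MF(u)\|_{L^{q'}_tL^{r'}_y}$ to control via H\"older and Hardy--Littlewood--Sobolev. This sidesteps the near/far decomposition of the convolution entirely and gives a somewhat cleaner bookkeeping, at the cost of the case analysis you describe for placing the $u_{\ge N/64}$ factor.

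Two small imprecisions worth noting. First, the range $q\in[4,2d-1)$ needs a further restriction $q>d$ so that HLS applies with source exponent $>1$ when you estimate $\|V*|u|^2\|_{L^{q/2}}$ (and $q>4$ so that $r'>1$ and $\tilde P_M$ is $L^{r'}$-bounded); the usable window is thus $q\in(\max(4,d),2d-1)$, which is nonempty in every dimension $d\ge 3$. Second, your claimed bound $\|\tilde P_MF(u)\|_{L^{q'}_tL^{r'}_y(J)}\lesssim_u|J|^{1/q'}\mathcal M(N/64)$ is not quite right for $|J|=2^k\delta<1$: the Strichartz input \eqref{55} carries $\langle|J|\rangle^{1/\tilde q}$ rather than $|J|^{1/\tilde q}$, so the honest bound picks up an extra $|J|^{-c}$ on short intervals. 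This is harmless---there are only $O(\log(1/\delta))$ such $k$, and the resulting contribution is still $\lesssim_u N^{-\alpha}\delta^{-\alpha}\mathcal M(N/64)$, absorbed by taking $N$ large. Finally, your remark that the $P_M^\pm$ case is ``handled identically'' should be read as: one uses $\|P^\pm P_M g\|_{L^2(|x|>N^{-1})}\lesssim\|g\|_{L^2}$ (from Lemma~2.8(ii), since $P_M=P_{\ge M/2}P_M$ and $M\ge N$) to reduce to the $P_M$ case, rather than applying Lemma~\ref{ws} to $P^\pm_M e^{it\Delta}\phi$ directly.
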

\begin{proof} By Strichartz estimates, we get
\begin{align*}
\Big\|\int_{2^k\delta}^{2^{k+1}\delta} \int_{{\Bbb
R}^d}[P_Me^{-it\Delta}](x,y)&\chi_k(t,y)[\tilde{P}_MF(u(t))](y)
dydt\Big\|_{L_x^2}\\
\lesssim&\
\big\|\chi_k(V*|u|^2)\tilde{P}_{>\frac{M}{8}}u\big\|_{L_t^1L_y^2}+\big\|\chi_ku\tilde{P}_{>\frac{M}{8}}(V*|u|^2)\big\|_{L_t^2L_y^\frac{2d}{d+2}}.
\end{align*}
We first consider \begin{align*}
\Big\|\chi_k(V*|u|^2)\tilde{P}_{>\frac{M}{8}}u\Big\|_{L_t^1L_y^2}\
\lesssim\
&\Big\|\chi_k(V*|u|^2)\Big\|_{L_t^1L_y^\infty}\Big\|P_{>\frac{M}{64}}u\Big\|_{L_t^\infty
L_y^2}\\
\lesssim\ &\
\Bigg\|\chi_k(y)\int_{|x-y|\geq\frac{|y|}{2}}\frac{|u(x)|^2}{|y-x|^2}dx\Bigg\|_{L_t^1L_y^\infty}\mathcal{M}(\frac{N}{64})
\\
&+\Bigg\|\chi_k(y)\int_{|x-y|<\frac{|y|}{2}}\frac{|u(x)|^2}{|y-x|^2}dx\Bigg\|_{L_t^1L_y^\infty}\mathcal{M}(\frac{N}{64}).
\end{align*}
On one hand, \begin{align*}
\Bigg\|\chi_k(y)\int_{|x-y|\geq\frac{|y|}{2}}\frac{|u(x)|^2}{|y-x|^2}dx\Bigg\|_{L_t^1L_y^\infty}
\lesssim&\ \big\|\chi_k(y)|y|^{-2}\big\|_{L_t^1L_y^\infty}\|u\|_{L_t^\infty L_y^2}\\
\lesssim&_u\ (M2^k\delta)^{-2}(2^k\delta).
\end{align*}
On the other hand, by weighted Strichartz estimates, H\"{o}lder's
inequality and (\ref{56}),
taking $p=\frac{d}{2}-\frac{1}{4}$, $q=2d-\frac{3}{2}$ and
$\theta=\frac{q}{2p}=\frac{4d-3}{2d-1}\leq2$, we have
\begin{align*}
\Bigg\|\chi_k(y)\int_{|x-y|<\frac{|y|}{2}}&\ \frac{|u(x)|^2}{|y-x|^2}dx\Bigg\|_{L_t^1L_y^\infty}\\
\lesssim &\
\Bigg\|\chi_k(y)|y|^{-\frac{2(d-1)}{q}\theta}\int_{|x-y|\leq\frac{|y|}{2}}
\frac{|y|^{\frac{2(d-1)}{q}\theta}|u(x)|^2}{|x-y|^2}dx\Bigg\|_{L_t^1L_y^\infty}\\
\lesssim&\
\Bigg\|\chi_k(y)|y|^{-\frac{2(d-1)}{q}\theta}\int_{|x-y|\leq\frac{|y|}{2}}
\frac{|x|^{\frac{2(d-1)}{q}\theta}|u(x)|^2}{|x-y|^2}dx\Bigg\|_{L_t^1L_y^\infty}
\\
\lesssim&\
\Bigg\|\chi_k(y)|y|^{-\frac{2(d-1)}{q}\theta}\Big\|1_{|\cdot|\leq\frac{|y|}{2}}\frac{1}{|\cdot|^2}
\Big\|_{L_x^p}\Big\||x|^{\frac{2(d-1)}{q}}|u|\Big\|_{L_x^\frac{2q}{q-4}}^\theta\|u\|_{L_x^2}^{2-\theta}\Bigg\|_{L_t^1L_y^\infty}\\
\lesssim&\
\Big\|\chi_k(y)|y|^{-\frac{2(d-1)}{q}\theta}|y|^\frac{d-2p}{p}\Big\|_{L_t^\frac{2p}{2p-1}L_y^\infty}
\big\||x|^{\frac{2(d-1)}{q}}|u|\big\|_{L_t^qL_x^\frac{2q}{q-4}}^\theta\|u\|_{L_t^\infty
L_x^2}^{2-\theta}\\
\lesssim_u &\
\Big(M2^k\delta\Big)^{-\frac{d-1}{p}+\frac{d-2p}{p}}(2^k\delta)^\frac{2p-1}{2p}\langle2^k\delta\rangle^\frac{\theta}{q}
\lesssim_u M^{-\frac{2p-1}{p}}(2^k\delta)^\frac{1-p}{p}.
\end{align*} Therefore, we have
\begin{equation}\Big\|\chi_ku_{>\frac{M}{8}}(V*|u|^2)\Big\|_{L_t^1L_y^2}\lesssim M^{-\frac{2(2d-3)}{2d-1}}
2^{-\frac{2d-5}{2d-1}k}\delta^{-\frac{2d-5}{2d-1}}\mathcal{M}(\frac{N}{64}).\end{equation}
At last, by means of Bernstein, weighted Strichartz estimates  and
(\ref{56}), we have
\begin{align*}
\Big\|\chi_ku\tilde{P}_{>\frac{M}{8}}(V*&|u|^2)\Big\|_{L_t^2L_y^\frac{2d}{d+2}}\\
\lesssim&\
\big\|\chi_ku\big\|_{L_t^2L_y^\infty}\Big\|\tilde{P}_{>\frac{M}{8}}(V*|u|^2)\Big\|_{L_t^\infty
L_y^\frac{2d}{d+2}}\\
\lesssim&\
M^{-\frac{d-2}{2}}\big\|\chi_k|y|^{-\frac{d-1}{2}}\big\|_{L_t^4L_y^\infty}
\big\||y|^{\frac{d-1}{2}}u\big\|_{L_t^4L_y^\infty}\big\|\tilde{P}_{>\frac{M}{8}}(|u|^2)\big\|_{L_t^\infty
L_y^1}\\
\lesssim&\
M^{-\frac{d-2}{2}}(M2^k\delta)^{-\frac{d-1}{2}}(2^k\delta)^\frac{1}{4}
\langle2^k\delta\rangle^{\frac{1}{4}}\|P_{>\frac{M}{64}}u\|_{L_t^\infty L_y^2}\|u\|_{L_t^\infty L_y^2}\\
\lesssim&_uM^{-\frac{d-2}{2}}(M2^k\delta)^{-\frac{d-1}{2}}(2^k\delta)^\frac{1}{4}
\langle2^k\delta\rangle^{\frac{1}{4}}\mathcal{M}\big(\frac{N}{64}\big)\\
=&\
M^{-\frac{2d-3}{2}}2^{-\frac{d-2}{2}k}\delta^{-\frac{d-2}{2}}\mathcal{M}\big(\frac{N}{64}\big).
\end{align*}
Thus the LHS of (\ref{515}) can be bounded by
$$(N^{-\frac{2(2d-3)}{2d-1}}\delta^{-\frac{2d-5}{2d-1}}+N^{-\frac{2d-3}{2}}\delta^{-\frac{d-2}{2}})\mathcal{M}(\frac{N}{64}).$$
This is acceptable as long as we choose $N$ sufficiently large
depending on $\delta$ and $\eta$.\end{proof}

Now we turn to the integration over the region of $(t,y)$ where
$|y|\ll M|t|$. In \cite{KVZ}, the bounds of the kernels of the
propagators have been shown to be
\begin{equation}\label{516}
|P_Me^{-it\Delta}(x,y)|+|P_M^\pm e^{-it\Delta}(x,y)|\lesssim
\frac{1}{(M^2|t|)^{50d}}K_M(x,y)
\end{equation}
where
\begin{equation*}
K_M(x,y):=\frac{M^d}{\langle M(x-y)\rangle^{50d}}+\frac{M^d}{\langle
Mx\rangle^{\frac{d-1}{2}}\langle My\rangle^{\frac{d-1}{2}}\langle
M|x|-M|y|\rangle^{50d}}.
\end{equation*}
Furthermore, by Schur's test, it is the kernel of a bounded operator
on $L_x^2(\Bbb R^d)$.

 Let $\tilde{\chi}_k$ be the
characteristic function of the set
$$\{(t,y):2^k\delta\leq|t|\leq2^{k+1}\delta, |y|\ll M|t|\}.$$
\begin{lemma}[The tail]\label{tm55} Let $\eta>0$ be a small number and let
$\delta$ be as in Lemma \ref{tm53}. Then
$$\sum_{M\geq N}\sum_{k=0}^\infty\Big\|\int_{2^k\delta}^{2^{k+1}\delta}\int_{{\Bbb R}^d}
\frac{K_M(x,y)}{(M^2|t|)^{50d}}\tilde{\chi}_k(t,y)[\tilde{P}_MF(u(t))](y)dydt\Big\|_{L_x^2}\lesssim_u\eta\mathcal{M}\Big(\frac{N}{8}\Big)$$
for all $N$ sufficiently large depending on $u$ and $\eta$.
\end{lemma}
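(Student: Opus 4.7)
The strategy is to exploit the enormous decay factor $(M^2|t|)^{-50d}$ and dispose of all other loss terms by crude means; the only subtlety is extracting the factor $\mathcal{M}(N/8)$.

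First I would use Minkowski's inequality to move the $L^2_x$ norm inside the $t$- and $y$-integrals, and then apply the fact (pointed out right after the display of $K_M$) that $K_M(x,y)$ is the kernel of an operator bounded on $L^2(\mathbb{R}^d)$ uniformly in $M$, via Schur's test. This reduces the left-hand side to
\begin{equation*}
\sum_{M\geq N}\sum_{k=0}^\infty \frac{1}{(M^2\, 2^k\delta)^{50d}} \int_{2^k\delta}^{2^{k+1}\delta} \big\|\tilde{\chi}_k(t,\cdot)\,\tilde{P}_M F(u(t))\big\|_{L^2_y}\, dt .
\end{equation*}
All subsequent work is to bound the integrand by a harmless polynomial in $M$ and $\langle t\rangle$ times $\mathcal{M}(N/8)$; the $(M^2\,2^k\delta)^{-50d}$ swallows every such polynomial.

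Next I would extract the factor $\mathcal{M}(N/8)$ by a frequency decomposition analogous to the one used in Lemma 6.3. Write $u = u_{<N/16}+u_{\geq N/16}$ and expand $F(u)=(V\ast|u|^2)u$ trilinearly. Since $V$ is a Fourier multiplier and does not enlarge Fourier support, the pure low-frequency term $(V\ast|u_{<N/16}|^2)u_{<N/16}$ has Fourier support in $\{|\xi|<3N/16\}$, whereas $\tilde{P}_M$ is supported in $\{|\xi|\gtrsim M/4\}\supseteq \{|\xi|\geq N/4\}$. Because $3N/16 < N/4$, this term is annihilated, so every surviving piece carries at least one factor of $u_{\geq N/16}$, whose $L^\infty_tL^2_x$-norm is $\leq \mathcal{M}(N/16)$; a tiny adjustment of the cutoff (or absorbing constants in the final step) replaces this by $\mathcal{M}(N/8)$.

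For the remaining terms I would pay the $u_{\geq N/16}$ factor in $L^\infty_t L^2_x$ (gaining $\mathcal{M}(N/8)$) and estimate the other two factors using Hölder, Hardy--Littlewood--Sobolev for the $V\ast$ convolution, Bernstein if one needs to land in $L^2_y$ (this costs at most $M^{(d-2)/2}$), and the local Strichartz bounds (5.5) and weighted Strichartz bound (5.6) on $[2^k\delta,2^{k+1}\delta]$, together with the mass bound (5.1). The outcome is a bound of the form
\begin{equation*}
\big\|\tilde{P}_M F(u(t))\big\|_{L^2_y}\lesssim_u M^{\alpha}\langle 2^k\delta\rangle^{\beta}\,\mathcal{M}(N/8)
\end{equation*}
for fixed constants $\alpha,\beta$ depending only on $d$. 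Summing in $M\geq N$ and $k\geq 0$, the decay $(M^2\,2^k\delta)^{-50d}$ dominates completely and yields a bound of the form $C_u N^{-\gamma}\delta^{-\gamma'}\mathcal{M}(N/8)$ with $\gamma$ very large; choosing $N$ sufficiently large depending on $u$, $\eta$ and $\delta=\delta(u,\eta)$ makes the prefactor $\leq\eta$, giving the claim.

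The main obstacle is the frequency-support bookkeeping that pins the cutoff at exactly $N/8$ rather than at some smaller $cN$; because $\mathcal{M}$ is \emph{decreasing} in its argument, one cannot simply replace $\mathcal{M}(N/16)$ by $\mathcal{M}(N/8)$, so the split has to be performed so that the low-low-low term is annihilated by $\tilde{P}_M$ for every $M\geq N$ while still giving us the mass of $u_{\geq N/8}$; this is the only delicate point, and everything else is routine in view of the huge decay from the kernel estimate (6.16).
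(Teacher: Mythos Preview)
Your approach matches the paper's almost exactly: Minkowski to bring $L^2_x$ inside, Schur's test for $K_M$, then crude H\"older/Hardy--Littlewood--Sobolev estimates on $\tilde{P}_M F(u)$ with all polynomial losses absorbed by $(M^2\,2^k\delta)^{-50d}$. The only point where you diverge is the frequency split, and this is precisely where your self-identified ``main obstacle'' arises.

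The paper does \emph{not} split at a fixed $N$-dependent threshold. For each dyadic $M\geq N$ in the sum, the projector $\tilde{P}_M$ forces at least one factor of $u_{\geq M/8}$ in $F(u)$; placing that factor in $L^\infty_t L^2_x$ yields $\mathcal{M}(M/8)$. Since $\mathcal{M}$ is non-increasing and $M\geq N$, one has $\mathcal{M}(M/8)\leq\mathcal{M}(N/8)$, and the difficulty you flag simply does not occur. Concretely, the paper obtains on $[2^k\delta,2^{k+1}\delta]$
\[
\big\|\tilde{\chi}_k\,\tilde{P}_M F(u)\big\|_{L^1_t L^2_y}
\lesssim \|u_{\geq M/8}\|_{L^\infty_t L^2_x}\,\|u\|_{L^2_t L^{2d/(d-2)}_x}^2
\lesssim_u \mathcal{M}\Big(\frac{N}{8}\Big)\,\langle 2^k\delta\rangle,
\]
then multiplies by $(M^2\,2^k\delta)^{-50d}$ and sums over $M\geq N$, $k\geq 0$ to get $\lesssim_u (N^2\delta)^{-49d}\mathcal{M}(N/8)$, which is $\leq \eta\,\mathcal{M}(N/8)$ for $N$ large. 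So the fix for your proposal is one line: split at $M/8$ (an $M$-dependent threshold) rather than at $N/16$, and use monotonicity of $\mathcal{M}$. Your remark that ``a tiny adjustment of the cutoff (or absorbing constants)'' would pass from $\mathcal{M}(N/16)$ to $\mathcal{M}(N/8)$ is indeed wrong for the reason you yourself give, but the $M$-dependent split sidesteps it entirely.
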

\begin{proof}By Strichartz estimates, Hardy-Littlewood-Sobolev
inequality, H\"older inequality and (\ref{55}), we have
\begin{align*}
\Big\|\int_{\Bbb R}\int_{{\Bbb R}^d}
\frac{K_M(x,y)}{(M^2|t|)^{50d}}\tilde{\chi}_k(t,y)&[\tilde{P}_MF(u(t))](y)dydt\Big\|_{L_x^2}\\
\lesssim&
(M^22^k\delta)^{-50d}\big\|\tilde{\chi}_k\tilde{P}_MF(u)\big\|_{L_t^1L_x^2}\\
\lesssim&(M^22^k\delta)^{-50d}\|u_{\geq \frac{M}{8}}\|_{L_t^\infty
L_x^2}\|u\|_{L_t^2L_x^\frac{2d}{d-2}([2^k\delta,2^{k+1}\delta]\times\Bbb
R^d)}^2\\
\lesssim&(M^22^k\delta)^{-50d}\mathcal{M}\Big(\frac{N}{8}\Big)\langle2^k\delta\rangle.
\end{align*}
Summing over $k\geq0$ and $M\geq N$, we get
$$\sum_{M\geq N}\sum_{k=0}^\infty\Big\|\int_{\Bbb R}\int_{{\Bbb R}^d}
\frac{K_M(x,y)}{(M^2|t|)^{50d}}\tilde{\chi}_k(t,y)[\tilde{P}_MF(u(t))](y)dydt\Big\|_{L_x^2}\lesssim_u(N^2\delta)^{-49d}\mathcal{M}\Big(\frac{N}{8}\Big).$$
The claim follows by choosing $N$ sufficiently large depending on
$\delta$ and $\eta$.\end{proof}

{\it Proof of Proposition \ref{tm53}.} Naturally, we may bound
$\|u_{\geq N}\|_{L^2}$ by separately bounding the $L^2$ norm on the
ball $\{|x|\leq N^{-1}\}$ and on its complement. On the ball we use
(\ref{31}) while outside the ball we use (\ref{32}). Invoking
(\ref{513}) and the triangle inequality, we reduce the proof to
bounding certain integrals. The integrals over short times were
estimated in Lemma \ref{tm53}. For $|t|\geq\delta$, we further
partition the region of integration into two main pieces. The first
piece, where $|y|\gtrsim M|t|$, was dealt with in Lemma \ref{tm54}.
The remaining piece, $|y|\ll M|t|$, can be estimated by combining
(\ref{516}) with Lemma \ref{tm55}.
\section{Double high-to-low frequency cascade}
In this section, we use the additional regularity provided by
Theorem \ref{tm51} to preclude double high-to-low frequency cascade
solutions. We need the following lemma:

\begin{lemma}[Gagliardo-Nirenberg inequality of convolution
type, \cite{MiXZ4}]\label{LV}Let $V(x)=|x|^{-2}$ and
\begin{equation}\big\|u\big\|_{L^V}:=\Big(\iint_{\Bbb R^d\times{\Bbb
R}^d}
|u(x)|^{2}V(x-y)|u(y)|^{2}dxdy\Big)^{\frac{1}{4}},\end{equation}
 then \begin{equation}\label{gn}\|u\|_{L^V}^4\leq\frac{2}{\|Q\|_{L^2}^2}\|u\|_{L^2}^2\|\nabla
u\|_{L^2}^2.\end{equation}\end{lemma}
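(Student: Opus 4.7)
The plan is to follow Weinstein's variational characterization of the sharp constant in the Gagliardo--Nirenberg inequality, adapted to the convolution nonlinearity. Define the scale-invariant functional
\begin{equation*}
J(u):=\frac{\|u\|_{L^2}^2\,\|\nabla u\|_{L^2}^2}{\|u\|_{L^V}^4},\qquad u\in H^1(\mathbb{R}^d)\setminus\{0\},
\end{equation*}
so that proving the lemma is equivalent to showing $\inf_u J(u)=\|Q\|_{L^2}^2/2$. A quick scaling check: under $u\mapsto u_{\lambda,\mu}(x)=\mu\,u(\lambda x)$, the three factors transform with exponents that cancel, so $J$ is invariant under the two-parameter group $(\lambda,\mu)\in(0,\infty)^2$ (as well as under translation and phase). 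This lets me normalize any minimizing sequence so that $\|u_n\|_{L^2}=\|\nabla u_n\|_{L^2}=1$.

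First, I would establish existence of a minimizer. By Schwarz symmetrization, both $\|\nabla u\|_{L^2}^2$ decreases and $\|u\|_{L^V}^4$ increases, so I may take the sequence $\{u_n\}$ radially decreasing. Radial symmetry plus the uniform $H^1$ bound gives the Strauss pointwise decay and thus compactness of the embedding $H^1_{\text{rad}}\hookrightarrow L^p$ for the relevant $p$; combined with the Hardy--Littlewood--Sobolev inequality $\|u\|_{L^V}^4\lesssim\|u\|_{L^{4d/(2d-1)}}^4$ (or a direct splitting into high/low frequencies) this yields weak convergence in $H^1$ to some $Q_0\not\equiv 0$, upper semicontinuity of the norms, and hence the existence of a minimizer $Q_0$.

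Next, I would identify $Q_0$ with the ground state $Q$. The Euler--Lagrange equation for $J$ at $Q_0$ reads, after collecting derivatives,
\begin{equation*}
-\alpha\,\Delta Q_0+\beta\,Q_0=\gamma\,(|x|^{-2}\ast|Q_0|^2)Q_0
\end{equation*}
for explicit positive constants $\alpha,\beta,\gamma$ depending on $\|Q_0\|_{L^2},\|\nabla Q_0\|_{L^2},\|Q_0\|_{L^V}$. The two-parameter scaling freedom $u\mapsto \mu u(\lambda\cdot)$ allows me to choose $\lambda,\mu$ so that the rescaled function $\widetilde Q_0(x)=\mu Q_0(\lambda x)$ solves exactly $\Delta\widetilde Q_0+(|x|^{-2}\ast|\widetilde Q_0|^2)\widetilde Q_0=\widetilde Q_0$, i.e.\ $\widetilde Q_0$ is a positive radial $H^1$ solution of the ground-state equation. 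Invoking uniqueness of the positive radial ground state (as in the references cited for \eqref{ground}) gives $\widetilde Q_0=Q$, and by scale invariance $J(Q_0)=J(Q)$.

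Finally, I would compute $J(Q)$ explicitly using two Pohozaev-type identities obtained from the ground-state equation $\Delta Q+(|x|^{-2}\ast|Q|^2)Q=Q$. Testing against $Q$ gives
\begin{equation*}
-\|\nabla Q\|_{L^2}^2+\|Q\|_{L^V}^4=\|Q\|_{L^2}^2,
\end{equation*}
while testing against the dilation generator $x\cdot\nabla Q$ (using the standard integration-by-parts computation $\int(\Delta Q)(x\cdot\nabla Q)=\tfrac{d-2}{2}\|\nabla Q\|_{L^2}^2$, together with the symmetrization $\iint Q^2(x)Q^2(y)\,x\cdot\nabla_x|x-y|^{-2}\,dx\,dy=-\|Q\|_{L^V}^4$ for the Hartree term) yields
\begin{equation*}
\tfrac{d-2}{2}\|\nabla Q\|_{L^2}^2-\tfrac{d-1}{2}\|Q\|_{L^V}^4=-\tfrac{d}{2}\|Q\|_{L^2}^2.
\end{equation*}
Solving the two linear relations gives $\|\nabla Q\|_{L^2}^2=\|Q\|_{L^2}^2$ and $\|Q\|_{L^V}^4=2\|Q\|_{L^2}^2$, so $J(Q)=\|Q\|_{L^2}^2/2$, which is exactly the sharp constant \eqref{gn}. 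The main obstacle I anticipate is step two: ensuring compactness of the minimizing sequence in the presence of the non-local, scaling-invariant functional, which is handled cleanly by combining radial rearrangement with the Strauss radial embedding rather than invoking full concentration-compactness.
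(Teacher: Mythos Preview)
The paper does not prove this lemma; it is quoted from \cite{MiXZ4} and used as a black box. Your Weinstein-type variational argument is the standard route and is almost certainly what \cite{MiXZ4} does as well: minimize the scale-invariant quotient $J$, use Riesz rearrangement and the compact radial embedding to extract a minimizer, rescale to the ground-state equation, and read off the constant from the two Pohozaev relations. Your Pohozaev computation is correct and gives $\|\nabla Q\|_{L^2}^2=\|Q\|_{L^2}^2$, $\|Q\|_{L^V}^4=2\|Q\|_{L^2}^2$, hence $\inf J=\|Q\|_{L^2}^2/2$.

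Two small remarks. First, the Hardy--Littlewood--Sobolev exponent you wrote, $4d/(2d-1)$, is not the one that controls $\|u\|_{L^V}^4$; with $V=|x|^{-2}$ the correct exponent is $2d/(d-1)$ (from $1/p+1/p+2/d=2$ with $f=g=|u|^2$). This is harmless since $2d/(d-1)\in(2,2^*)$ as well, so the radial compactness step still goes through. Second, your invocation of uniqueness of the positive radial ground state is not cosmetic: the Pohozaev identities only give $\inf J=\|\widetilde Q_0\|_{L^2}^2/2$ for the rescaled minimizer $\widetilde Q_0$, and one genuinely needs $\|\widetilde Q_0\|_{L^2}=\|Q\|_{L^2}$ to land on the stated constant $2/\|Q\|_{L^2}^2$. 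The paper's phrasing ``the positive radial Schwartz solution'' already builds this uniqueness in, so you are consistent with the paper's conventions; just be aware that for the Hartree/Choquard ground state this uniqueness is itself a nontrivial input.
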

\begin{theorem}[Absence of double cascades]\label{tm61} There are no non-zero
global spherically symmetric solutions to (\ref{har}) that are
double high-to-low frequency cascades in the sense of Theorem
\ref{3sc}.
\end{theorem}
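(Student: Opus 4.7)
Since $u$ is global with $N(t)\lesssim 1$, Theorem~\ref{tm51} gives $u\in L_t^\infty H^s(\Bbb R\times\Bbb R^d)$ for every $s\geq 0$. In particular $u\in L_t^\infty H^1$, so the energy
$$E(u):=\tfrac12\|\nabla u\|_{L^2}^2+\tfrac{\mu}{4}\|u\|_{L^V}^4$$
is finite and conserved along the flow. My plan is to show $E(u)\equiv 0$ and then to deduce $\nabla u\equiv 0$, which together with $u\in L^2$ forces $u\equiv 0$, contradicting the standing hypothesis that $u$ is non-zero.

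By the double cascade hypothesis there is a sequence $t_n\to+\infty$ with $N(t_n)\to 0$. I claim $\|\nabla u(t_n)\|_{L^2}\to 0$ along this sequence. Fix $\eta>0$. Since $u$ is almost periodic modulo $G_{rad}$ (so $\xi(t)\equiv 0$), there is $C(\eta)$ with $\|P_{\geq C(\eta)N(t)}u(t)\|_{L^2}^2\leq\eta$ for every $t$. Set $A_n:=C(\eta)N(t_n)$ and decompose $\|\nabla u(t_n)\|_{L^2}^2$ into low- and high-frequency parts relative to $A_n$. Bernstein controls the low part by $A_n^2\,M(u)\to 0$, while Cauchy--Schwarz in Fourier space together with the $H^2$ bound supplied by Theorem~\ref{tm51} gives
$$\|\nabla P_{>A_n}u(t_n)\|_{L^2}^2\leq\|P_{>A_n}u(t_n)\|_{L^2}\,\|u(t_n)\|_{H^2}\lesssim_u\eta^{1/2}.$$
Sending first $n\to\infty$ and then $\eta\to 0$ proves the claim.

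Lemma~\ref{LV} then yields $\|u(t_n)\|_{L^V}^4\lesssim_u\|\nabla u(t_n)\|_{L^2}^2\to 0$, so $E(u(t_n))\to 0$ and hence $E(u)=0$ by conservation. In the defocusing case $E(u)\geq\tfrac12\|\nabla u\|_{L^2}^2$ directly. In the focusing case, Lemma~\ref{LV} combined with the hypothesis $M(u)<M(Q)$ gives the coercive bound
$$0\;=\;E(u)\;\geq\;\tfrac12\|\nabla u\|_{L^2}^2\,\Bigl(1-\tfrac{M(u)}{M(Q)}\Bigr),$$
which again forces $\|\nabla u\|_{L^2}\equiv 0$. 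In either case $u(t)$ is a constant at every time, and since $u(t)\in L^2$ this constant must be zero, contradicting the assumption.

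The crux is the second step, where one converts frequency-side $L^2$-smallness (from almost periodicity) into $\dot H^1$-smallness. This genuinely requires regularity strictly beyond $H^1$, and the additional regularity provided by Theorem~\ref{tm51} is exactly what makes the argument go; the very same reasoning breaks down in the self-similar scenario, where $N(t)\to\infty$ as $t\to 0^+$ and the $L_t^\infty H^s$ bound fails, which is why that enemy had to be excluded separately in Section~5.
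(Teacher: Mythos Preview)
Your proof is correct and follows essentially the same approach as the paper: use the additional regularity from Theorem~\ref{tm51} to interpolate between the $L^2$-smallness of high frequencies (from almost periodicity) and a higher Sobolev bound, thereby showing $\|\nabla u(t_n)\|_{L^2}\to 0$ along a sequence with $N(t_n)\to 0$, and then invoke Lemma~\ref{LV} with the mass constraint to reach a contradiction. The only cosmetic differences are that the paper uses H\"older with a general $H^s$, $s>1$, where you use Cauchy--Schwarz with $H^2$, and the paper phrases the endgame as ``$\|\nabla u\|_{L^2}^2\sim_u E(u)\sim_u 1$ contradicts $\|\nabla u(t_n)\|_{L^2}\to 0$'' rather than your more explicit ``$E(u)=0\Rightarrow u\equiv 0$''; these are equivalent rearrangements of the same argument.
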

\begin{proof} Suppose there exists such a solution $u$. By Theorem
\ref{tm51}, $u$ lies in $C_t^0H_x^1(\Bbb R\times\Bbb R^d)$. Hence
the energy
$$E(u(t)):=\frac{1}{2}\int_{\Bbb R^d}|\nabla
u(t,x)|^2dx+\frac{\mu}{4}\iint_{\Bbb R^d\times\Bbb
R^d}\frac{|u(x)|^2|u(y)|^2}{|x-y|^2}dxdy$$ is finite and conserved.
As we have $M(u)<M(Q)$ in the focusing case, Lemma \ref{LV} gives
\begin{equation}\label{61}
\|\nabla u(t)\|_{L_x^2(\Bbb R^d)}^2\sim_u E(u)\sim_u1
\end{equation}
for all $t\in\Bbb R$. Since
$$\liminf_{t\rightarrow-\infty}N(t)=\liminf_{t\rightarrow+\infty}N(t)=0,$$
there are two time sequences along which $N(t)\rightarrow0$.

Let $\eta>0$ be arbitrary. By Definition \ref{aps}, we can find
$C=C(\eta,u)>0$ such that
$$\int_{|\xi|\geq CN(t)}|\hat{u}(t,\xi)|^2d\xi\leq \eta^2$$
for all $t$. Meanwhile, by Theorem \ref{tm51}, $u\in C_t^0H_x^s(\Bbb
R\times\Bbb R^2)$ for some $s>1$. Thus, $$\int_{|\xi|\geq
CN(t)}|\xi|^{2s}|\hat{u}(t,\xi)|^2d\xi\lesssim_u 1$$ for all $t$ and
some $s>1$. By H\"older inequality, we obtain
$$\int_{|\xi|\geq CN(t)}|\xi|^2|\hat{u}(t,\xi)|^2d\xi\lesssim_u \eta^{2(s-1)/s}.$$
On the other hand, from mass conservation and Plancherel's theorem,
we obtain
$$\|\nabla u(t)\|_{L_x^2(\Bbb R^d)}\lesssim_u \eta^{(s-1)/s}+CN(t)$$
for all $t$. As $\eta>0$ is arbitrary and there exists a sequence of
times $t_n\rightarrow\pm\infty$ such that $N(t_n)\rightarrow 0$, we
conclude that $\|\nabla u(t_n)\|_{L_x^2(\Bbb R^d)}\rightarrow 0$, as
$n\rightarrow\infty$. This contradicts with (\ref{61}). \end{proof}
 \section{Death of solitons}
 Let $$M_a(t):=2\Im\int_{\Bbb
 R^d}\bar{u}(t,x)\vec{a}(x)\cdot\nabla u(t,x)dx,$$
 then we have (see \cite{MiXZ2} for similar calculation)
\begin{align*}
\partial_tM_a(t)=&-\int_{\Bbb R^d}\Delta(\partial_ja_j)|u(t,x)|^2dx+4\Re\int_{\Bbb R^d}\partial_ka_{j}u_ju_kdx\\
&\quad\quad-\mu\iint_{\Bbb R^d\times\Bbb
R^d}(\vec{a}(x)-\vec{a}(y))\cdot\nabla
V(x-y)|u(t,x)|^2|u(t,y)|^2dxdy.
\end{align*}
\begin{lemma}[Localized virial identity]\label{tm71}
 Let $\vec{a}(x)=x\psi\big(\frac{|x|}{R}\big)$, where $\psi$ is a smooth
 function and $\psi(r)=1$ when $r\leq1$; $\psi=0$ when $r\geq2$. Then we get
\begin{align}
\partial_tM_a(t)=&8E(u(t))\nonumber\\
&-\int_{\Bbb
R^d}\Big[\frac{d^2-1}{R|x|}\psi'\big(\frac{|x|}{R}\big)+\frac{2d+1}{R^2}\psi''\big(\frac{|x|}{R}\big)+
\frac{|x|}{R^3}\psi'''\big(\frac{|x|}{R}\big)\Big]|u(t,x)|^2dx\label{71}\\
&+4\int_{\Bbb
R^d}\Big[\psi\big(\frac{|x|}{R}\big)-1+\frac{|x|}{R}\psi'\big(\frac{|x|}{R}\big)\Big]|\nabla
u(t,x)|^2dx\label{72}\\
&+2\mu\iint_{\Bbb R^d\times\Bbb
R^d}\Big[x\psi\big(\frac{|x|}{R}\big)-y\psi\big(\frac{|y|}{R}\big)-(x-y)\Big]\cdot\frac{x-y}{|x-y|^4}|u(t,x)|^2|u(t,y)|^2dxdy.\label{73}
\end{align}
\end{lemma}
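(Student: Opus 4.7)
The plan is to specialize the general virial identity stated just before the lemma to the cutoff vector field $\vec{a}(x)=x\,\psi(|x|/R)$ and then reorganize the resulting expression so that the sharp rigid contribution $8E(u(t))$ is isolated and the remaining terms are all supported where $\psi(|x|/R)\neq 1$. Concretely, I compute the three quantities that enter the general formula, namely $\partial_k a_j$, $\nabla\!\cdot\!\vec{a}$ and $\Delta(\nabla\!\cdot\!\vec{a})$, and then add and subtract $8E(u(t))=4\|\nabla u(t)\|_{L^2}^2+2\mu\iint\frac{|u(t,x)|^2|u(t,y)|^2}{|x-y|^2}dxdy$ to rewrite the quadratic and quartic pieces in the form displayed in (\ref{72}) and (\ref{73}).

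For the derivatives of $\vec{a}$, writing $\chi(x):=\psi(|x|/R)$ one gets $\partial_k a_j=\delta_{jk}\chi+\frac{x_jx_k}{R|x|}\psi'(|x|/R)$. Contracting yields
\begin{equation*}
\nabla\!\cdot\!\vec{a}=d\,\psi(|x|/R)+\tfrac{|x|}{R}\psi'(|x|/R),
\end{equation*}
and applying $\Delta=\partial_r^2+\tfrac{d-1}{r}\partial_r$ to this radial function produces exactly
\begin{equation*}
\Delta(\nabla\!\cdot\!\vec{a})=\tfrac{d^2-1}{R|x|}\psi'(|x|/R)+\tfrac{2d+1}{R^2}\psi''(|x|/R)+\tfrac{|x|}{R^3}\psi'''(|x|/R),
\end{equation*}
which, carried into the first term of the general virial identity with its minus sign, reproduces line (\ref{71}). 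Next, the kinetic contraction gives $\partial_k a_j\,\partial_j u\,\overline{\partial_k u}=\chi|\nabla u|^2+\tfrac{\psi'(|x|/R)}{R|x|}|x\!\cdot\!\nabla u|^2$; invoking the spherical symmetry of $u$ (tacitly in force throughout this section) I use $|x\!\cdot\!\nabla u|^2=|x|^2|\nabla u|^2$, so that $4\Re\int\partial_k a_j\,u_j\,\overline{u_k}=4\int\bigl[\psi(|x|/R)+\tfrac{|x|}{R}\psi'(|x|/R)\bigr]|\nabla u|^2$. Subtracting $4\int|\nabla u|^2$ from this, and absorbing the subtracted piece into $8E(u(t))$, produces line (\ref{72}).

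Finally, for the nonlinear term I use $\nabla V(x-y)=-2(x-y)/|x-y|^4$, turning the third piece of the general identity into $2\mu\iint\tfrac{(\vec{a}(x)-\vec{a}(y))\cdot(x-y)}{|x-y|^4}|u(t,x)|^2|u(t,y)|^2\,dx\,dy$. Splitting $\vec{a}(x)-\vec{a}(y)=(x-y)+\bigl[x\psi(|x|/R)-y\psi(|y|/R)-(x-y)\bigr]$, the first piece contributes $2\mu\iint|u|^2|u|^2/|x-y|^2$, which combines with the $4\int|\nabla u|^2$ carved out above to give precisely the $8E(u(t))$ term; the second piece is exactly (\ref{73}). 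Summing (\ref{71}), (\ref{72}), (\ref{73}) and $8E(u(t))$ then matches $\partial_t M_a(t)$ term by term.

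The calculation is essentially bookkeeping, so the only real obstacle is the arithmetic: matching the coefficients $\tfrac{d^2-1}{R|x|}$, $\tfrac{2d+1}{R^2}$, $\tfrac{|x|}{R^3}$ in $\Delta(\nabla\!\cdot\!\vec{a})$, and ensuring the split of the nonlinear kernel is done so that the subtracted $(x-y)$ piece cancels exactly against the kinetic subtraction, leaving the clean packaging $8E(u(t))$. No approximation is required; the identity holds pointwise in $t$ for any sufficiently regular radial solution, and our earlier regularity results (Theorem \ref{tm51}) guarantee that all integrals are absolutely convergent.
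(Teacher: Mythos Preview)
Your proposal is correct and follows exactly the approach implicit in the paper: the paper records the general virial identity immediately before the lemma and leaves the specialization to $\vec{a}(x)=x\psi(|x|/R)$ as a routine computation, which you have carried out in full detail. Your use of the radial symmetry of $u$ to reduce $|x\cdot\nabla u|^2$ to $|x|^2|\nabla u|^2$ is legitimate in this section and is the step that makes the kinetic term collapse to the clean form (\ref{72}); the arithmetic for $\Delta(\nabla\!\cdot\!\vec a)$ and the kernel split in the nonlinear term are both correct.
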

 \begin{proposition}
 There are no non-zero global spherically symmetric solutions to
 (\ref{har}) that are soliton-like in the sense of Theorem \ref{3sc}.
 \end{proposition}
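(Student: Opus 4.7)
My plan is to preclude soliton-like solutions by means of a truncated virial argument based on Lemma \ref{tm71}, using the additional regularity from Theorem \ref{tm51} to ensure the energy provides a positive driving term. Since the soliton case has $N(t)\equiv 1$ and $x(t)=\xi(t)=0$, Theorem \ref{tm51} gives $u\in L^\infty_t H^s_x$ for every $s\geq 0$, so the energy $E(u)=\tfrac12\|\nabla u\|_{L^2}^2+\tfrac{\mu}{4}\iint|u(x)|^2|u(y)|^2|x-y|^{-2}\,dx\,dy$ is finite and conserved. I would first show $E(u)\gtrsim_u 1$: the almost periodicity in $L^2$ combined with the uniform $H^s$ bound gives, by interpolation, precompactness of $\{e^{-i\theta(t)}u(t)\}$ in $H^1$, and since a nonzero $L^2(\mathbb{R}^d)$ limit point cannot have vanishing gradient, $\|\nabla u(t)\|_{L^2}\gtrsim_u 1$ uniformly in $t$. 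Energy positivity is then trivial in the defocusing case; in the focusing case with $M(u)<M(Q)$ it follows from Lemma \ref{LV}, which yields $E(u)\geq\tfrac12\bigl(1-M(u)/M(Q)\bigr)\|\nabla u\|_{L^2}^2$.

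Next I would apply Lemma \ref{tm71} with $\vec{a}(x)=x\psi(|x|/R)$ and show that each of the correction terms (\ref{71}), (\ref{72}), (\ref{73}) is $o(1)$ as $R\to\infty$, uniformly in $t$. Term (\ref{71}) has integrand of size $O(R^{-2})$ supported in $R\leq|x|\leq 2R$, so it is $\lesssim R^{-2}M(u)$. Term (\ref{72}) has a cutoff factor that vanishes on $|x|\leq R$ and is $O(1)$ outside, so it is dominated by $\int_{|x|\geq R}|\nabla u(t,x)|^2\,dx$, which tends to $0$ uniformly in $t$ by the $H^1$-precompactness of $\{u(t)\}$ combined with the spatial localization from almost periodicity. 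The convolution correction (\ref{73}) is the most delicate: the bracket $x[\psi(|x|/R)-1]-y[\psi(|y|/R)-1]$ vanishes on $\{|x|,|y|\leq R\}$ but can be as large as $|x|+|y|$ outside, so I would bound the resulting double integral by splitting into $|x-y|\leq 1$ and $|x-y|>1$, using the $L^\infty$ bound on $u$ from $H^s$ with $s>d/2$, the almost-periodic tail estimate $\int_{|x|>R}|u(t,x)|^2\,dx<\eta$, and the radial Sobolev decay $|u(x)|\lesssim|x|^{-(d-1)/2}$ at infinity.

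With $R$ chosen so that the three error terms together are at most, say, $4E(u)$, Lemma \ref{tm71} gives $\partial_t M_a(t)\geq 4E(u)\gtrsim_u 1$ for all $t$. On the other hand, Cauchy--Schwarz together with mass conservation and the $L^\infty_t H^1_x$ bound yields $|M_a(t)|\leq 2\|u\|_{L^2}\|x\psi(|x|/R)\nabla u\|_{L^2}\lesssim_u R$ uniformly in $t$. Integrating the pointwise lower bound over $[0,T]$ then forces $T\lesssim_u R$, which is absurd as $T\to\infty$; this contradiction kills the soliton. I expect the main obstacle to be the treatment of (\ref{73}): the bracket has no decay in $|x-y|$ and the effective kernel $|x-y|^{-3}$ is borderline (at the edge of the Hardy--Littlewood--Sobolev range for $d=3$), so the improved regularity from Theorem \ref{tm51} and the radial structure must be carefully combined rather than handled by a quick commutator bound.
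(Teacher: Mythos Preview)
Your overall scheme is exactly the paper's: use Theorem \ref{tm51} to upgrade regularity, show $E(u)>0$ via Lemma \ref{LV}, bound $|M_a(t)|\lesssim_u R$, and show the correction terms (\ref{71})--(\ref{73}) in Lemma \ref{tm71} are negligible so that $\partial_t M_a\gtrsim E(u)$, contradicting the boundedness of $M_a$. Your treatments of (\ref{71}), (\ref{72}) and of the a priori bound on $M_a$ are fine; your route to the uniform lower bound on $\|\nabla u(t)\|_{L^2}$ via $H^1$-precompactness is a legitimate (slightly more elaborate) alternative to simply invoking energy conservation.

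The gap is in your handling of (\ref{73}). You estimate the bracket by $|x|+|y|$, which leaves you with an effective kernel $|x-y|^{-3}$; you then propose to split into $|x-y|\le 1$ and $|x-y|>1$ and use the $L^\infty$ bound on $u$. In dimension $d=3$ this fails on the near-diagonal piece: $\int_{|z|\le 1}|z|^{-3}\,dz=\infty$, and neither the $L^\infty$ bound nor radial decay repairs this local singularity. The paper avoids this by observing that the map $\phi(x):=x[\psi(|x|/R)-1]$ has gradient bounded uniformly in $R$, so the bracket $\phi(x)-\phi(y)$ obeys $|\phi(x)-\phi(y)|\lesssim|x-y|$. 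This reduces the effective kernel to $|x-y|^{-2}$, i.e.\ exactly the Hartree kernel, and one is left (after a cutoff $\chi$ to $|x|\gtrsim R$) with quantities of the form $\iint|x-y|^{-2}|\chi u(x)|^2|u(y)|^2\,dx\,dy$, which are controlled by Lemma \ref{LV} together with the tail bound $\|\chi u\|_{L^2}^2\le\eta$. Once you insert this Lipschitz observation, your argument goes through; without it, the $d=3$ case does not close.
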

 \begin{proof} Assume to the contrary that there is such a solution $u$.
 Then by Theorem \ref{tm51}, $u\in C_t^0H_x^s$ for some $s>1$. In particular,
 \begin{equation}\label{74}
 |M_a(t)|\lesssim_u R.
 \end{equation} Note that in the focusing case, $M(u)<M(Q)$. As a
 consequence, Lemma \ref{LV} gives \begin{equation}\label{91}E(u)\gtrsim_u\int_{\Bbb R^d}|\nabla
 u(t,x)|^2>0.\end{equation}
 We will show that (\ref{71})-(\ref{73}) constitute only a small
 fraction of $E(u)$. Combining this fact with Lemma \ref{tm71}, we
 conclude $\partial_tM_a(t)\gtrsim E(u)>0$, which contradicts with (\ref{74}).
 As in \cite{KVZ}, (\ref{71}) and (\ref{72}) can be bounded by $R^{-2}$ and
 $\eta^\frac{s-1}{s}+\eta$, respectively, where $\eta>0$ is a small number to be chosen later.
 The rest of this section is devoted to estimating (\ref{73}).

By Definition \ref{aps} and the fact that $N(t)=1$ for all $t\in\Bbb
R$, if $R$ is sufficiently large depending on $u$ and $\eta$, then
\begin{equation}\label{92}\int_{|x|\geq\frac{R}{4}}|u(t,x)|^2dx\leq\eta\end{equation} for all
$t\in\Bbb R$. Let $\chi$ denote a smooth cutoff to the region
$|x|\geq\frac{R}{2}$, chosen so that $\nabla \chi$ is bounded by
$R^{-1}$ and supported where $|x|\sim R$. By Lemma \ref{LV},
(\ref{91}) and (\ref{92}), we have
\begin{align*} (\ref{73})
\leq& C\iint_{|x|\geq R\atop
|y|\geq R}\Big[x(\psi(\frac{|x|}{R})-1)-y(\psi(\frac{|y|}{R})-1)\Big]\cdot\frac{x-y}{|x-y|^4}|u(x)|^2|u(y)|^2dxdy\\
&\quad+C\iint_{|x|\geq R\atop |y|\leq
R}|x(\psi(\frac{|x|}{R})-1)|\frac{|u(x)|^2|u(y)|^2}{|x-y|^3}dxdy\\
&\quad+C\iint_{|y|\geq R\atop
|x|\leq R}|y(\psi(\frac{|y|}{R})-1)|\frac{|u(x)|^2|u(y)|^2}{|x-y|^3}dxdy\\
:=&I+II+III. \end{align*} On one hand,
\begin{align*}I\leq&C\iint_{|x|\geq R\atop
|y|\geq
R}\frac{|\chi(x)u(x)|^2|\chi(y)u(y)|^2}{|x-y|^2}dxdy\\
\lesssim&\|\chi u\|_{L_x^2}^2\|\nabla (\chi u)\|_{L_x^2}^2
\lesssim_u \eta.
\end{align*}
On the other hand,
\begin{align*}
II\leq& C\iint_{R<|x|\leq 2R\atop |y|\leq
R}\Big|x\big(\psi(\frac{|x|}{R})-1\big)\Big|\frac{|\chi(x)u(x)|^2|u(y)|^2}{|x-y|^3}dxdy\\
&\quad+\iint_{|x|>2R \atop |y|\leq
R}|x|\frac{|\chi(x)u(x)|^2|u(y)|^2}{|x-y|^3}dxdy\\
\leq& C\iint_{\Bbb R^d\times\Bbb
R^d}\frac{|\chi(x)u(x)|^2|u(y)|^2}{|x-y|^3}dxdy+\iint_{\Bbb
R^d\times\Bbb R^d}\frac{|\chi(x)u(x)|^2|u(y)|^2}{|x-y|^2}dxdy\\
\lesssim&\|\chi u\|_{L_x^2}\|\nabla (\chi u)\|_{L_x^2}\|\nabla
u\|_{L_x^2}^2+\|\chi u\|_{L_x^2}\|\nabla (\chi
u)\|_{L_x^2}\|u\|_{L_x^2}\|\nabla u\|_{L_x^2}\\
\lesssim&_u \eta^{1/2}.
\end{align*}
$III$ can be estimated similarly.  Choosing $\eta$ sufficiently
small depending on $u$ and $R$
 sufficiently large depending on $u$ and $\eta$, we obtain
 $$(\ref{71})+(\ref{72})+(\ref{73})\leq\frac{1}{100}E(u).$$
 This completes the proof.
\end{proof}

\vskip0.5cm
 \textbf{Acknowledgements:} The authors thank the
referees and the associated editor for their invaluable comments
and suggestions which helped improve the paper greatly. C. Miao
and G.Xu  were partly supported by the NSF of China (No.10725102,
No.10726053), and L.Zhao was supported by China postdoctoral
science foundation project.

  \begin{center}

\end{center}
\end{document}